   \edef\Gin@extensions{\Gin@extensions,.mps}
\DeclareMathAlphabet{\mathpzc}{OT1}{pzc}{m}{it}
\newtheorem{theorem}{Theorem}[section]
\newtheorem{lemma}[theorem]{Lemma}
\newtheorem{proposition}[theorem]{Proposition}
\newtheorem{conjecture}[theorem]{Conjecture}
\newtheorem{question}[theorem]{Question}
\theoremstyle{definition}
\newtheorem{definition}[theorem]{Definition}
\newtheorem{example}[theorem]{Example}
\newtheorem{notation}[theorem]{Notation}
\theoremstyle{remark}
\newtheorem{remark}[theorem]{Remark}
\definecolor{DarkBlue}{rgb}{0,0.1,0.55}
\numberwithin{equation}{section}
\newcommand {\hide}[1]{}
\newcommand {\junk}[1]{}
\newcommand {\R} {\mathbb{R}}
\newcommand {\C}     {\mathbb{C}}
\newcommand {\Z}  {\mathbb{Z}}
\newcommand {\ZZ} {{\rm Z}}
\newcommand {\RR} {{\mathcal R}}
\newcommand {\eps} {{\varepsilon}}
\newcommand {\PP}     {\mathbb{P}} 
\newcommand{\card}{\mathrm{card}}
\def\addots{\mathinner{\mkern1mu
\raise1pt\vbox{\kern7pt\hbox{.}}
\mkern2mu\raise4pt\hbox{.}\mkern2mu
\raise7pt\hbox{.}\mkern1mu}}
\newcommand{\HH}  {\mbox{\rm H}}
\newcommand{\Hom}{\mathrm{Hom}}
\newcommand\colim{\op{colim}}
\newcommand{\Ob}{\mathrm{Ob}}
\newcommand{\Top}{\mathbf{Top}}
\newcommand{\nc}{\newcommand}
\newcommand{\rc}{\renewcommand}
\nc{\mc}{\mathcal}
\rc{\t}{\text}
\nc{\op}[1]{\operatorname{#1}}
\nc{\opcat}[1]{\mathbf{#1}}
\nc{\id}{\op{id}}
\nc{\umutnote}[1]{{\marginpar{\small \textcolor{blue}{#1}}}}
\def\sbullet{\raisebox{0.009cm}{\scalebox{0.5}{$\bullet$}}}
\def\arrowdiagram{\sbullet \to \sbullet}
\def\bigO{\op{O}}
\def\loccit{\emph{loc. cit.}}
\def\Bigoplus{\displaystyle\bigoplus}
\nc{\cA}{\mc{A}}\nc{\cB}{\mc{B}}\nc{\cC}{\mc{C}}\nc{\cD}{\mc{D}}\nc{\cE}{\mc{E}}\nc{\cF}{\mc{F}}\nc{\cG}{\mc{G}}\nc{\cH}{\mc{H}}\nc{\cI}{\mc{I}}\nc{\cJ}{\mc{J}}\nc{\cK}{\mc{K}}\nc{\cL}{\mc{L}}\nc{\cM}{\mc{M}}\nc{\cN}{\mc{N}}\nc{\cO}{\mc{O}}\nc{\cP}{\mc{P}}\nc{\cQ}{\mc{Q}}\nc{\cR}{\mc{R}}\nc{\cS}{\mc{S}}\nc{\cT}{\mc{T}}\nc{\cU}{\mc{U}}\nc{\cV}{\mc{V}}\nc{\cW}{\mc{W}}\nc{\cX}{\mc{X}}\nc{\cY}{\mc{Y}}\nc{\cZ}{\mc{Z}}
\rc{\PP}{\mathbb{P}}
\rc{\AA}{\mathbb{A}}
\rc{\ZZ}{\mathbb{Z}}
\rc{\RR}{\mathbb{R}}
\nc{\bbC}{\mathbb{C}}
\nc{\CC}{\mathbb{C}}
\nc{\code}[1]{{\texttt{#1}}}
\nc{\mcode}[1]{{\text{\texttt{#1}}}}
\nc{\xto}[1]{\raisebox{-0.03cm}{\scalebox{0.85}{$\,\xrightarrow{#1}\,$}}}
\nc{\xtonormal}[1]{\xrightarrow{#1}}
\nc{\xfrom}[1]{\xleftarrow{#1}}
\nc{\sidenote}[1]{\marginpar{\small #1}}
\nc{\Aff}{\opcat{Aff}}
\nc{\AffVar}{\opcat{AffVar}}
\nc{\ProjVar}{\opcat{ProjVar}}
\nc{\GAP}{\opcat{GrAlgPairs}}
\nc{\GA}{\opcat{GrAlg}}
\nc{\acc}{\mathrm{a.c.c}}
\nc{\GL}{\mathrm{GL}}
\nc{\Mod}{\t{-}\opcat{Mod}}
\nc{\Sub}{\opcat{Sub}}
\nc{\iso}{\cong}
\nc{\compose}{\circ}
\nc{\Nat}{\mathrm{Nat}}
\nc{\IC}{\mathrm{IFC}}
\begin{document}
\title[Categorical Complexity]
{Categorical Complexity}
\author{Saugata Basu}
\address{Department of Mathematics,
Purdue University, West Lafayette, IN 47906, U.S.A.}
\email{sbasu@math.purdue.edu}
\thanks{The first author was supported in part by NSF grants CCF-1319080, CCF-1618981,  DMS-1620271 and CCF-1910441 while working on this paper.}
\author{M. Umut Isik}
\address{Department of Mathematics, University of California, Irvine, Irvine, CA 92697}
\email{isik@math.uci.edu}

\subjclass{Primary 18A10, 68Q15; Secondary 14Q20}


\begin{abstract}
We introduce a notion of complexity of diagrams (and in particular of objects and morphisms) in an arbitrary category, as well as a notion of complexity of functors between categories equipped with complexity functions. We discuss several examples of this new definition in categories of wide common interest, such as finite sets, Boolean functions, topological spaces, vector spaces, semi-linear and semi-algebraic sets,  graded algebras, affine and projective varieties and schemes, and modules over polynomial rings. We show that on one hand categorical complexity recovers in several settings classical notions of non-uniform computational complexity (such as circuit complexity), while on the other hand it has features which make it mathematically more natural.  We also postulate that studying functor complexity is the categorical analog of classical questions in complexity theory about separating different complexity classes.  
\end{abstract}

\maketitle
\tableofcontents
\section{Introduction}
\label{sec:intro}

It is usual to associate some measure of complexity to mathematical objects.
For example, the complexity of a polynomial is often measured by its degree,  or alternatively by the volume of its Newton polytope, or the
number the monomials appearing in it with non-zero coefficients, or the least number of operations needed for an algorithm to evaluate the 
polynomial at a given point. Once a notion of complexity is fixed, one can make quantitative statements
about properties of the objects in terms of their complexity. In the case of polynomials for example, there are many results on upper bounds on
the topological invariants of the variety that the polynomial defines (see for example the survey \cite{BPR10}), 
the number of steps needed to desingularize the variety \cite{BGMW2011},
and many other functions defined on the space of polynomials, in terms of the chosen complexity measure.

The notion of complexity also arose in theoretical computer science
as a means of studying
efficiency of algorithms and also to measure the intrinsic hardness  of certain algorithmic problems. The latter led to the development of structural complexity theory and in particular to the famous P versus NP questions for
discrete complexity classes that remains unresolved until today. 
Even though these arose first in the context of decision problems and Boolean functions, there have been subsequent attempts to generalize the scope of computational complexity to other classes of mathematical objects --  for example, the Blum-Shub-Smale (B-S-S) theory for computations over reals as well as complex numbers \cite{BSS}, over more general structures \cite{Poizat}, for polynomials \cite{ValiantClasses, ValiantPermanent, vonzurgathen87}, 
for constructible sheaves and functions \cite{BasuConstr}. Some of these generalizations are motivated by costs of computations in certain models 
of computations, while others by the desire to have a sound internal notion of complexity for 
mathematical objects. Remarkably,
there exists analogues of the P versus NP question in all the generalizations mentioned above. Thus, it seems that there should be a more fundamental way of looking at questions arising in  computational complexity 
theory
which unifies these various viewpoints.

The goal of the current paper is to develop this general theory of complexity  via  a categorical approach that reconciles the intuitive notion of complexity of mathematical objects with the different notions of computational and circuit complexities used in theoretical computer science.

We start by defining a categorical notion called a \emph{diagram computation}. In an arbitrary category $\cC$ with a chosen set of morphisms called \emph{basic morphisms}, diagram computations can be used to construct diagrams, and in particular objects and morphisms in $\cC$ as follows. At the first level, one starts with a diagram consisting entirely of basic morphisms, and then successively adds limits and colimits of arbitrary 
subdiagrams, along with the accompanying morphisms from/to those subdiagrams, to construct more and more complex diagrams. Any diagram isomorphic to a subdiagram of the final resulting diagram is said to be computed by the diagram computation. This allows us to associate, to each object, arrow, or diagram in $\cC$, a complexity by counting the number of limits, colimits and basic morphisms used in its most efficient computation. Diagram computations come in three kinds, the full version described above, called a \emph{mixed computation}, and two more restricted ones where one is either allowed to use only the so-called constructive limits, or only constructive colimits; leading to the notions of \emph{mixed complexity}, \emph{limit complexity}, and \emph{colimit complexity} of objects, arrows or any diagrams in $\cC$.

While our notion of complexity bears some similarity with a more classical view of complexity coming from logic, namely descriptive complexity \cite{Immerman},
there is one important respect in which our notion of complexity differs significantly from 
all classical notions. In our categorical world, isomorphic objects should have identical  complexity  -- which is indeed the case with our definition. Thus, we are able to define a good notion of complexity in the category, say, of affine or projective schemes which is independent of embeddings. This is very natural from the mathematical point of view -- making our theory completely geometric in those settings -- but is sometimes at odds with ordinary complexity theory which deals with embedded objects (like subsets of the Boolean hypercube or subvarieties of $\C^n$). Nevertheless, we will show that, with the appropriate choice of category and basic morphisms, even non-categorical notions of complexity can be meaningfully embedded in categorical ones.

A fundamentally new point of view emerges when one thinks about the categorical analogues of classical complexity questions. For every functor between two categories for which complexity is defined, one can define a natural notion of complexity of  the functor. Unlike, the complexity of diagrams, which are numbers,
the complexity of a functor is a function $f:\mathbb{N}\rightarrow \mathbb{N}$, and one can ask whether the complexity of 
such a functor is bounded from above by a polynomial. In this way classical questions about separation of complexity classes become, in the categorical world, questions about polynomial boundedness of the complexity of certain natural functors. With this shift of viewpoint, one can pose many questions about complexities of functors which have no direct analogues in the world of computational complexity. Well-studied properties of functors, such as preservation of limits and colimits, adjointness, etc. are important from this point of view.

The importance of functor complexity was already suggested in \cite{BasuConstr}, where the complexities of adjoint pairs of functors between the categories of semi-algebraically constructible sheaves on finite dimensional real affine spaces were posited as 
generalizing the P versus NP question in the real B-S-S model.

We remark here that connections between computability and logic on one hand and
category theory and topos theory on the other hand has a long history (see for example, \cite{Lambek-Scot, MacLane-Moerdijk}). 
A more recent work on computability and complexity in categorical structures using the notion of Kan extensions appears in \cite{Yanofsky15}.
However, our goal is different, 
and it is to develop a completely general notion of complexity, based on category theory, that 
is useful in studying basic objects in algebra and geometry from a quantitative point of view.  To the best of our knowledge this task has not been undertaken before.

We now give a brief summary of our results. After the basic definitions in Section~\ref{sec:definitions}, we look at several key examples. 
For sets, the colimit complexity of a set $S$ is $\card(S) + 1$ 
(Proposition \ref{prop:sets}).
Infinite sets are ``non-computable''
in this theory. 
For topological spaces, colimit computations starting from simplices
and face maps define a simplicial complexity for topological spaces. Similarly, mixed computations
starting from points and intervals give rise to cubical complexity. These measure how hard it is to make a
space from simplices and cubes respectively. Another important example is one where we recover monotone Boolean complexity from the categorical complexity in the lattice of subsets of a finite set.

In order to relate the new notion of categorical complexity, with pre-existing notions of (non-uniform) complexity, such as circuit complexity, or lengths of straight-line programs (we refer the reader to the books \cite{Burgisser-book1,Burgisser-book2} for these notions), we prove certain comparison theorems.
The first set of such theorems are about affine varieties, affine schemes, and algebras over a field. We show that the affine zero-set of a polynomial with low arithmetic circuit complexity has low limit complexity (Theorem~\ref{thm_circuittodiagram}); on the other hand, if $X$ is a variety with low limit complexity, then it is \emph{isomorphic} to the zero-set of a polynomial 
with low arithmetic circuit complexity (Theorem~\ref{thm_diagramtocircuit}). The same results hold for affine schemes and algebras. For projective schemes in $\PP^{n}$: by building affine pieces with limits and then gluing them using colimits, we show that the mixed complexity of a projective scheme is bounded above by a constant multiple of $n^2N$, where $N$ is the arithmetic circuit complexity of its defining equations.  

The categorical complexities of isomorphic varieties are equal by definition, while circuit complexity, being a non-geometric attribute, does not share this property. In order to reconcile these two notions, we consider in Section~\ref{sec:comparisons} two additional categories where circuit complexity of polynomials is, in a sense, embedded into categorical complexity. The first of these is the category of pairs of graded algebras, constructed specifically to make this embedding possible. Still, it remains to be seen how complexity in this category compares to arithmetic circuit complexity of polynomials, or to the complexity of projective varieties discussed in \cite{Isik2019}. The second category considered here is the category of modules over polynomial rings, where we prove that the colimit complexities of a sequence of morphism diagram $\left(k\left[ x_1,\dots,x_n \right] \xto{1\mapsto f_n} k\left[ x_1,\dots,x_n \right]\right)_{n > 0}$ are bounded by a 
quasi-polynomial function of $n$, if and only if the arithmetic circuit complexities of the sequence $(f_n)_{n > 0}$ are also bounded by a quasi-polynomial function of $n$ (cf. Remark \ref{rem:RMod}).

In Section~\ref{sec:functors}, we discuss the behavior of categorical complexity under the action of  functors. Limit and colimit computations are preserved under right and left adjoints respectively. We define the complexity of a functor $F:\cC \to \cD$ as a function $C(F)(n)$ of $n$, where $C(F)(n)$ is the supremum of the complexity of $F(D)$, where $D$ runs over all diagrams in $\cC$ whose complexity is less than or equal to $n$. 
We posit that the question of whether the complexity of the ``image functor''
(see Definition \ref{def:images})
on the morphism category $\cC^{\arrowdiagram}$ is polynomially bounded,  is the categorical analogue of the P vs NP problem for the category $\cC$ (cf. discussion in the beginning of 
Section~\ref{subsec:complexity-of-image-functor}).
We investigate this question in the context of limit complexity in the categories of semi-linear and semi-algebraic sets, and answer it in the negative.
Our final result is an analysis of the colimit complexity of the image functor for the category of modules over polynomial rings.

Finally, in Section~\ref{sec:open} we list several open problems and future research direction in the area of categorical complexity.

We assume no prior knowledge of category theory in this paper and have included all relevant definitions. For background in category theory
we refer the reader to the books \cite{Awo, Maclane, Simmons}.
For background in complexity theory  we refer the reader to the books \cite{Burgisser-book1,Burgisser-book2}.
Finally, we make use of certain basic functors from algebraic geometry, and we refer the reader to the book \cite{Mumford-Oda} as an accessible source for these.

\section{Categories and functors}
\label{sec:definitions}

In this section we recall some basic definitions from category theory, and introduce some notation that will be useful in what follows.

\begin{definition}[Categories]
\label{def:categories}
A category $\cC$ consists of:
\begin{enumerate}[1.]
\item
a class $\Ob(\cC)$ whose elements are the ``objects of the category $\cC$'';
\item
for every pair $A,B$ of objects of $\cC$, a set $\cC(A,B)$ of ``morphisms'' or ``arrows'' from $A$ to $B$; 
\item
for every triple $A,B,C$ of objects of $\cC$, a composition law
\[
\cC(A,B) \times \cC(B,C) \rightarrow \cC(A,C)
\]
which will be denoted by $(f,g) \mapsto g\circ f$;
\item
for every object $A$ of $\cC$, a morphism $1_A \in \cC(A,A)$ called the identity morphism on $A$.
\end{enumerate}
The above data are subject to the following two axioms.
\begin{enumerate}[(a)]
\item
\label{itemlabel:def:categories:associativity}
(Associativity). Given morphisms $f \in \cC(A,B), g \in \cC(B,C), h \in \cC(C,D)$,
\[
h \circ (g\circ f) = (h \circ g)\circ f.
\]
\item
\label{itemlabel:def:categories:identity}
(Identity). Given morphisms $f \in \cC(A,B), g \in \cC(B,C)$ the following equalities hold.
\begin{eqnarray*}
1_B \circ f &=& f, \\
g \circ 1_B &=& g.
\end{eqnarray*}
\end{enumerate}
We say that a category $\cC$ is a small category if its class of objects is a set.

For any category $\cC$, we will denote $\cC^{\mathrm{opp}}$ the category, whose morphisms are defined by 
\[
\cC^{\mathrm{opp}}(A,B) = \cC(B,A),
\]
for every pair of objects $A,B$ of $\cC$.
\end{definition}

\begin{notation}
We will denote a morphism $f \in \cC(A,B)$ often as $f:A \to B$, and also denote the source $A$ by $\op{dom}(f)$, and the target
$B$ by $\op{codom}(f)$. 
\end{notation}

\begin{notation}
\label{not:categories}
The following categories will appear later in the paper.
\begin{enumerate}
\item
\label{itemlabel:not:categories:set}
The category $\opcat{Set}$ whose objects are sets and whose morphisms are maps between sets.
\item
\label{itemlabel:not:categories:Vect}
The category $\opcat{Vect}_k$ where $k$ is a field, and whose objects are $k$-vector spaces, and whose morphisms are linear maps.
\item
\label{itemlabel:not:categories:Grp}
The category $\opcat{Grp}$ of groups and homomorphisms.
\item
\label{itemlabel:not:categories:SL-SA}
The category $\opcat{SL}$ (respectively, $\opcat{SA}$)  of embedded semi-linear (respectively, semi-algebraic) sets and 
affine (respectively, polynomial) maps. More precisely,
each object of $\opcat{SL}$ (respectively, $\opcat{SA}$)is  a semi-linear 
(respectively, semi-algebraic) subset  $A \subset \R^n$ for some $n \geq 0$, and a morphism
$(A \subset \R^n) \rightarrow (B \subset \R^m)$ is a map $f: A \rightarrow B$,  such that there exists a commutative square
\[
\xymatrix{
A \ar@{^{(}->}[d] \ar[r]^{f} & B \ar@{^{(}->}[d]\\
\R^n \ar[r]^g & \R^m
}
\]
where $g:\R^n \rightarrow \R^m$ is an affine (respectively, polynomial) map. In other words $f$ is the restriction to $A$
of an affine (respectively, polynomial) map from $\R^n$ to $\R^m$.
\item
\label{itemlabel:not:categories:Alg}
The category $\opcat{Alg}_k$ of $k$-algebras over a field $k$.
\item
\label{itemlabel:not:categories:AffVar}
The category $\opcat{AffVar}_k$ of affine varieties, and  the category $\opcat{AffSch}_k$ of affine $k$-schemes for a field $k$.
\item
\label{itemlabel:not:categories:top}
The category of $R$-$\opcat{Mod}$  where $R$ is a polynomial ring in finitely many variables.
\item
\label{itemlabel:not:categories:set}
The category of $\Top$ of topological spaces.
\end{enumerate} 
\end{notation}

\begin{definition}[Functors]
\label{def:functor}
A (covariant)  functor $F$ from a category $\cA$ to $\cB$ consists of the following:
\begin{enumerate}[1.]
\item
a mapping $\Ob(\cA) \rightarrow \Ob(\cB)$  (the image of $A$ will be written as $F(A)$);
\item
for every pair objects $A,A'$ of $\cA$, a mapping $\cA(A,A') \rightarrow \cB(F(A), F(A'))$ (the image of $f \in \cA(A,A')$) is written as $F(f)$).
\end{enumerate}
The above data is subject to the following axioms.
\begin{enumerate}[(a)]
\item
\label{itemlabel:def:functor:composition}
for every pair of morphisms $f\in \cA(A,A'), g \in \cA(A',A'')$, 
\[
F(g \circ f) = F(g) \circ F(f);
\]
\item
\label{itemlabel:def:functor:identity}
for every object $A$ of $\cA$,
\[
F(1_A) = 1_{F(A)}.
\]
\end{enumerate}

A contravariant functor $F$ from a category $\cA$ to $\cB$ is a covariant functor from
$\cA^{\mathrm{opp}}$ to $\cB$.
\end{definition}

\begin{definition}[Natural transformations between functors]
\label{def:nat}
Let $F,G$ be two (covariant) functors from a category $\cC$ to $\cD$. A natural transformation $\theta:F \rightarrow G$, is a family of arrows 
$
\left(\theta_C: F(C) \rightarrow G(C)\right)_C$ in $\cD$, indexed by objects $C$ of $\cC$, 
such that for each pair of objects $C,C'$ of $\cC$, and $f \in \cC(C,C')$ the following diagram commute:
\[
\xymatrix{
F(C) \ar[d]^{F(f)} \ar[r]^{\theta_C} & G(C) \ar[d]^{G(f)}\\
F(C')    \ar[r]^{\theta_{C'}} & G(C).
}
\]

We will denote the class of all natural transformations between two functors $F,G$ by 
$\Nat(F,G)$. A natural transformation $\theta \in \Nat(F,G)$ is called an isomorphism
if it admits an inverse.

\end{definition}

\begin{definition}
\label{def:adjoint}
Two functors $F:\cC \rightarrow \cD$, and $U:\cD \rightarrow \cC$ are said to be an \emph{adjoint pair} (with $F$ left adjoint to $U$, and $U$ right adjoint to $F$), if there exists for each object $C$ of $\cC$ and $D$ of $\cD$, bijective maps
\[
\cC(C,U(D)) \rightarrow \cD(F(C),D), 
f \mapsto f^\sharp,
\]
\[
\cD(F(C),D) \rightarrow \cC(C,U(D)), g \mapsto g_\flat
\]
which are  inverses to each other and are moreover natural in $C$ and $D$.

More precisely, 
for every $k \in \cC(A,C), \ell \in \cD(D,B)$, 
\[ 
(U(\ell)\circ f \circ k)^\sharp = \ell \circ f^\sharp \circ F(k),
\]
\[
U(\ell) \circ g_\flat \circ k = (\ell \circ g \circ F(k))_\flat
\]
both hold.
\end{definition}

\begin{remark}
\label{rem:adjoint}
In Definition~\ref{def:adjoint},
the functors $F,U$ induce functors $\mathcal{F},\mathcal{U}$  from the product  category
$\cC^{\mathrm{opp}} \times \cD$ to $\opcat{Set}$ defined by
\[
\mathcal{F}(C,D) = \cD(F(C),D), 
\]
\[
\mathcal{U}(C,D) = \cC(C, U(D)),
\]
with $\mathcal{F}(f,g), \mathcal{G}(f,g)$ defined in the obvious manner for 
$f,g$ arrows in the categories $\cC^{\mathrm{opp}}$ and $\cD$ respectively.
Then, the naturality condition in Definition~\ref{def:adjoint}  translates into the fact that the functors $\mathcal{F}$
and $\mathcal{U}$ are natural transformations.
\end{remark}

\begin{example}
\label{eg:adjoint}
The functors $F: \opcat{Set} \rightarrow \opcat{Grp}$ which takes a set to the free group generated by the set, and the forgetful functor $U: \opcat{Grp} \rightarrow \opcat{Set}$ form an adjoint pair (with $F$ left adjoint to $U$).
\end{example}

We also need the notion of universal elements.  

\begin{definition}[Representable functors, Yoneda's Lemma, and universal elements]
\label{def:universal}
Let $\cC$ be a category and $F$ a functor from $\cC$ to $\opcat{Set}$. Then for each
object $A$ of $\cC$, $\cC(A,\cdot)$ is a functor from $\cC$ to $\opcat{Set}$. The map
\[
\lambda_A: \Nat(\cC(A,\cdot),F) \rightarrow F(A),
\]
defined by
\[
\lambda_A(\phi) = \phi_A(\id_A)
\]
is bijective. This statement is referred to as Yoneda's Lemma.
Now, if $\phi \in  \Nat(\cC(A,\cdot),F)$ is an isomorphism, then we say that $F$ is representable (by $A$), and $u = \lambda_A(\phi)$ is called a \emph{universal element} of $F$. The element $u$ has the property, that for any object $B$ of $\cC$ and $f \in \cC(B,A)$, there is a unique element
 $t \in F(B)$, such that $u = F(f)(t)$.
 
If $F$ is a contravariant functor from $\cC$ to $\opcat(Set)$, then it is representable by an
object $A$ of $\cC$, if and only if the corresponding covariant functor 
from $\cC^{\mathrm{opp}}$ to sets is representable, and in this case a universal element of 
the this covariant functor will be called a universal element of $F$.
\end{definition}

\begin{definition}[Graphs and diagram categories]
\label{def:diagram}
A  \emph{directed graph} $I$ is a quadrapule  $(V,E,s,t)$, where $V,E$ are sets (referred to as the sets of vertices and edges  of $I$), and $s,t: E \to V$ are maps. (For $e \in E$,
we will sometime refer to $s(e)$ as the source, and $t(e)$ as the target of the edge $e$.) A homomorphism $\phi = (\phi_1,\phi_2):I \rightarrow I'$  of directed graphs, 
$I = (V,E,s,t), I' = (V',E',s',t')$ is a pair of maps $\phi_1:V \rightarrow V', \phi_2:E \rightarrow E'$, such that the two diagrams
\[
\xymatrix{
E \ar[r]^{\phi_2} \ar[d]^{s} &E'\ar[d]^{s'} \\
V\ar[r]^{\phi_1} & V'
},
\xymatrix{
E \ar[r]^{\phi_2} \ar[d]^{t} &E'\ar[d]^{t'} \\
V\ar[r]^{\phi_1} & V'
} 
\]
commute.

For a small category $\cC$, we will denote by $U(\cC)$ the directed graph, whose set of vertices is the set of objects of $\cC$, and  whose set of edges is the set of all morphisms of
$\cC$, along with the maps $s,t$ taking a morphism to its codomain and  domain, respectively. Even if the category $\cC$ is not small we will continue to use the notation $U(\cC)$ to denote its underlying graph, keeping in mind that will always restrict our attention to subgraphs of $U(\cC)$ with finite sets of vertices.

Let $\cC$ be a category and let $U(\cC)$ be the underlying directed graph. Let $I=(V,E,s,t)$ be any directed graph. By a \emph{diagram in $\cC$}, we mean a directed graph homomorphism $D: I \to U(\cC)$. The graph $I$ will be  called the \emph{shape of $D$}, 
and 
we will denote by $\op{v}(D)$ (respectively, $\op{e}(D), s(D), t(D)$), the set of vertices (respectively, edges, sources and targets of edges) of the graph $I$.

We say that the diagram
$D$ is \emph{discrete}, if the edge set $E$ of $I$ is empty.

By a \emph{subdiagram} of a diagram $D: I\to U(\cC)$, with $I=(V,E)$, we mean the restriction $D_{J}: J \to U(\cC)$, with $J=(V',E')$ a full sub-graph of $I$, i.e. $V'\subset V$, and $E' = \left\{ e\in E \,|\, s(e)\in V'\t{, }t(e)\in V' \right\}$. The restrictions to not necessarily full subgraphs will be specified as \emph{not necessarily full subdiagrams}.

For $I=(V,E,s,t)$, and two diagrams, $D_1 : I \to U(\cC)$, $D_2: I \to U(\cC)$, a morphism between $D_1$ and $D_2$ is a collection of morphisms $\varphi = (\varphi_{v} : D_1(v) \to D_2(v))_{v\in V}$, such that, for all $e\in E$, $D_2(e) \compose \varphi_{D_1(s(e))} = \varphi_{D_1(t(e))} \compose D_1(e)$. This defines the \emph{category of diagrams, $\cC^{I}$,  of the category $\cC$ with shape $I$}.  
\end{definition}

\begin{remark}
\label{rem:path-cat}
Note that in Definition~\ref{def:diagram} $I$ is just a directed graph and does not have a composition operation on it. As such, there is no a priori assumption of functoriality/commutativity for diagrams. 

It is possible to associate a category, $\opcat{Pth}(I)$,  to each directed graph $I$, called the path category of $I$,
and define (path) diagrams as actual functors  from $\opcat{Pth}(I)$ to a category $\cC$ \cite{Simmons}. The diagrams we consider in this paper are not functors in this sense.

Also, note that our notion of a full subdiagram is not the same
as the full sub-functor of the diagram functor from the path category of $I$. So if $I$ is the directed graph $1 \rightarrow 2 \rightarrow 3$, and $D$ a diagram of a category $\cC$ of shape $I$, then the restriction
of $D$ to the full subgraph corresponding to the vertex set $J = \{1,3\}$, 
is the diagram whose image is the subgraph of $U(\cC)$ consisting of two vertices 
$D(1),D(3)$, and an empty set of edges.  
\end{remark}

\begin{definition}[Cones, limits]
\label{def:limit}
Given an object $W$ of $\cC$, a \emph{constant diagram with value $W$ and shape $I=(V,E,s,t)$}, 
is the diagram $D: I \to U(\cC)$, with $D(v)  = W$ for all
$v \in V$, and $D(e) = 1_W$ for all $e \in E$. 

\begin{figure}
\includegraphics[scale=0.7]{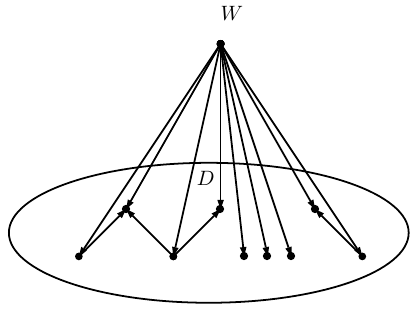}
\caption{Cone over $D$ with vertex $W$.}
\label{fig:cone}
\end{figure}

Given a diagram $D: I \to U(\cC)$ and an object $W$ of $\cC$, a \emph{cone over $D$ with vertex $W$},  is a morphism $D' \to D$, 
where $D'$ is a constant diagram of shape $I$ with value $W$. 
Notice that if $D'$ is the constant diagram of shape $I$ and value $W$, then a cone $\varphi:D' \to D$ is determined by morphisms 
$\varphi(v): W \rightarrow D(v)$, such that following diagram commutes  
for each $e \in E$:

\begin{equation}
\label{eqn:cone-commutative}
\xymatrix{
& W \ar[ld]_{\varphi(s(e)}\ar[rd]^{\varphi(t(e))} & \\
D(s(e)) \ar[rr]^{D(e)} && D(t(e)).
}
\end{equation}

Note that the cone over $D:I\to \cC$ with vertex $W$ can also be thought of as a new diagram, whose associated directed graph is obtained from the graph of $I$ by adding has one extra vertex $v_0$, and an edge, $e_v$,  for each vertex $v$ of $I$, with source $v_0$ and target $v$
(cf. Figure \ref{fig:cone}). 
This diagram of course has the extra property that all subdiagrams of the form
\eqref{eqn:cone-commutative}  commute.

Given a diagram $D:I \rightarrow U(\cC)$, with $I = (V,E,s,t)$, 
the map $\cC^{I}(\cdot,D)$, that associates to each object $W$ of $\cC$ the set of cones over $D$ with vertex $W$ defines a contravariant functor from $\cC$ to $\opcat{Set}$. 

A universal element of this functor is called the \emph{limit of $D$} (denoted by $\lim D$). In other words, $\lim D$ is a cone $\lim D: D' \rightarrow D$, where $D'$ is a constant diagram of shape $I$ (say with value $L$), such that for any cone
$\varphi: D'' \rightarrow D$ where $D''$ is a constant diagram of shape $I$ with value $M$, there exists a unique morphism $\varphi':M \rightarrow L$ such that the following diagram commutes (denoting by $\varphi'$ also the induced morphism in $\cC^I(D'',D)$):

\[
\xymatrix{
D'' \ar[rr]^{\varphi'}\ar[rd]^{\varphi}&& D'\ar[ld]_{\lim D}\\
&D&
}.
\]

We will denote by $I(\lim D)$, the directed graph $(V',E',s',t')$ 
where 
\begin{eqnarray*}
V' &=& V \cup \{v_0\}, v_0 \not \in V,\\
E' &=& E \cup \bigcup_{v \in V} \{e_v\},
\end{eqnarray*}
and $s',t'$ are defined by
\[
\left.
\begin{array}{lll}
s'(e) &=& s(e),\\
t'(e) &=& t(e),
\end{array}
\right\} 
\mbox{ if $e \in E$}, 
\]
\[
\left.
\begin{array}{lll}
s'(e) &=& v_0,\\
t'(e) &=& v.
\end{array}
\right\}
\mbox{ if $e = e_v, v \in V$}.
\]
We will also sometime abuse notation and refer to the value $L$ of the limit also by $\lim D$.
\end{definition}

\begin{example}[Products, pullbacks, equalizers, composition]
\label{eg:examples-of-limits}
The limit of a discrete diagram is called a \emph{product}. Limits of diagrams of shapes,
\[
\xymatrix{
\bullet \ar[r] &\bullet & \bullet \ar[l]}, 
\xymatrix{
 \bullet \ar@<-.5ex>[r] \ar@<.5ex>[r] & \bullet,
}
\]
are usually referred to as pull-backs and equalizers respectively.
Note also that taking the  composition of two morphisms is a particular case of taking limits of diagrams of shape
\[
\xymatrix{
\bullet \ar[r]& \bullet \ar[r]& \bullet
}.
\]
\end{example}

\begin{definition}[Cocones, colimits]
\label{def:colimit}
By dualizing (i.e. reversing the direction of the arrow) in Definition \ref{def:limit} we obtain the notion of cocones and colimits of diagrams.
Given a diagram $D: I \to U(\cC)$ and an object $W$ of $\cC$, a \emph{cocone over $D$ with vertex $W$},
is a morphism $D \to D'$, 
where $D'$ is a constant diagram of shape $I$ with value $W$. 
Notice that if $D'$ is the constant diagram of shape $I$ and value $W$, then a cone $\varphi:D \to D'$ is determined by morphisms 
$\varphi(v): D(v) \rightarrow W$, such that the following diagram commutes for each $e \in E$:

\[
\xymatrix{
& W  & \\
D(s(e)) \ar[ru]^{\varphi(s(e)}\ar[rr]^{D(e)} && D(t(e))\ar[lu]_{\varphi(t(e))}
}.
\]

Given a diagram $D:I \rightarrow U(\cC)$, with $I = (V,E,s,t)$, 
the map $\cC^{I}(D,\cdot)$, that associates to each object $W$ of $\cC$ the set of cocones over $D$ with vertex $W$ defines a covariant functor from $\cC$ to $\opcat{Set}$. A universal element of this functor is called the \emph{colimit of $D$} (denoted by $\colim D$). In other words, $\colim D$ is a cocone $\colim D: D \rightarrow D'$, where $D'$ is a constant diagram of shape $I$ (say with value $C$), such that for any cocone
$\varphi: D \rightarrow D''$ where $D''$ is a constant diagram of shape $I$ with value $M$, there exists a unique morphism $\varphi':C \rightarrow M$ such that the following diagram commutes (denoting by $\varphi'$ also the induced morphism in $\cC^I(D'',D)$):

\[
\xymatrix{
D' \ar[rr]^{\varphi'} && D''\\
&D\ar[lu]^{\colim D}\ar[ru]^{\varphi}&
}.
\]

We will denote by $I(\colim D)$, the directed graph $(V',E',s',t')$ 
where 
\begin{eqnarray*}
V' &=& V \cup \{v_0\}, v_0 \not \in V,\\
E' &=& E \cup \bigcup_{v \in V} \{e_v\},
\end{eqnarray*}
and $s',t'$ are defined by
\[
\left.
\begin{array}{lll}
s'(e) &=& s(e),\\
t'(e) &=& t(e),
\end{array}
\right\} 
\mbox{ if $e \in E$}, 
\]
\[
\left.
\begin{array}{lll}
s'(e) &=& v,\\
t'(e) &=& v_0.
\end{array}
\right\}
\mbox{ if $e = e_v, v \in V$}.
\]
We will also sometime abuse notation and refer to the value $C$ of the colimit also by $\colim D$.
\end{definition}

\begin{example}[Coproducts, push-forwards and coequalizers]
\label{eg:examples-of-colimits}
By reversing the direction of the arrows of the diagrams in Example \ref{eg:examples-of-limits}, and taking colimits, we obtain
the definitions of coproducts, push-forwards and coequalizers (respectively, from the definitions of products, pullbacks and equalizers).
\end{example}


\section{Definition of diagram computations}
\label{sec:definitions}

In this section we define \emph{ diagram computations} and \emph{categorical complexity}. As explained in 
Section~\ref{sec:intro} (Introduction)
diagram computations come in three
different flavors -- namely limit, colimit and mixed limit-colimit computations. The associated notions of complexities of diagrams will be called
limit, colimit and mixed complexity respectively.   

We fix a category $\cC$ for the rest of this section, and also fix a set $\cA$ of morphisms in $\cC$. The morphisms in $\cA$  will be referred to as the \emph{basic morphisms} in $\cC$. 

\subsection{Limit and colimit computations}
\label{subsec:limit-computation}
We define a notion of computation in $\cC$, called a $\emph{limit computation}$ by starting with these basic morphisms and adding a finite limit at each step; similarly, in a \emph{colimit computation}, we build objects by adding colimits of subdiagrams. 

\begin{definition}
\label{def_categorical-computation}
  A \emph{limit computation} (respectively, a \emph{colimit computation}) in $\cC$ is a finite sequence of diagrams $(D_0,\dots,D_s)$, with $D_j: I_j 
  = (V_j,E_j,s_j,t_j)
  \to U(\cC), 0 \leq j \leq s$, where: 
 \begin{enumerate}[(i)]
   \item
   \label{itemlabel:def:categorical-computation:1}
   $D_0(e) \in \cA$ for each edge $e$ of $I_0$.
   \item
   \label{itemlabel:def:categorical-computation:2}
     For each $i=1,\dots,s$, $D_{i}$ is obtained from $D_{i-1}$ by adding a limit or colimit cone of a subdiagram. More precisely, there is a subdiagram $D_{i-1}|_{J_i}$ of $D_{i-1}$ with 
     \[
     J_i = (V_{i-1}',E_{i-1}',s_i' = s_i|_{E_{i-1}'},t_i' = t_i|_{E_{i-1}'}), V_{i-1}' \subset V_{i-1}, E_{i-1}' \subset E_{i-1},
     \]
     \hide{
     and an object $L_i$ which is a limit (respectively, $C_i$ which is a colimit) of $D_{i-1}|_{J_i}$ 
     such that the difference between $D_{i}$ and $D_{i-1}$ are $L_i$ and the limit cone morphisms out of $L_i$ (respectively, $C_i$ and the colimit cocone morphisms into $C_i$). 
    }
    such that, denoting $I(\lim D_{i-1}|_{J_i}) = (V',E',s',t')$ (respectively, $I(\colim D_{i-1}|_{J_i}) = (V',E',s',t')$),
    \begin{enumerate}
    \item
    $V_i = V_{i-1} \cup \{v_0\}$, where $v_0$ is the unique vertex of $V' \setminus V_{i-1}$;
    \item 
    $E_i = E_{i-1} \cup \bigcup_{v \in V_{i-1}'} \{e_v\}$;
    \item
    For $e \in E_{i}$, 
    \[
   \left.
    \begin{array}{lll}
    s_i(e) &=& s_{i-1}(e),\\
    t_i(e)   &=& t_{i-1}(e),
    \end{array}
   \right\} 
  \mbox{ for } e \in E_{i-1},
  \]
  \[
   \left.
    \begin{array}{lll}
 s_i(e) &=& v_0 \mbox{ (respectively, $v$)}, \\
 t_i(e) &=& v \mbox{ (respectively, $v_0$)},
    \end{array}
   \right\} 
 \mbox{ for } e= e_v,  v \in V_{i-1}';
  \]
  \item
    For $e \in E_{i}$, 
    \begin{eqnarray*}
    D_i(e) &=& D_{i-1}(e) \mbox{ for } e \in E_{i-1}, \\
    D_i(e)  &=& (\lim D_{i-1}|_{J_i})_v \mbox {(respectively,  $(\colim D_{i-1}|_{J_i})_v$), for $e = e_v,  v \in V_{i-1}'$},
    \end{eqnarray*}
 (cf. Definitions \ref{def:diagram}, \ref{def:limit} and  \ref{def:colimit}).
    \end{enumerate}
   \item
   \label{itemlabel:def:categorical-computation:3} (\emph{Constructivity}) 
   For each  $i, 0 < i \leq s$,
   if the unique vertex $v_0 \in V_i \setminus V_{i-1}$ belongs to $J_j$ for some $j, i < j \leq s$, 
   then $J_i$ is a subgraph of $J_j$. (In other words,
   if a limit $L_i = \lim D_{i-1}|_{J_i}$ (respectively, colimit $C_i$) produced in the $i$th step of the computation is used again in the    
   subdiagram $D_{j-1}|_{J_j}$ used at the $j$th step of the computation, then 
   $J_{i}$ is a subgraph of $J_{j}$, i.e. the subdiagram that produced $L_i$ (respectively, $C_i$) must be a subdiagram of 
   $D_{j-1}|_{J_j}$.)
 \end{enumerate}

The computation $(D_0,\dots,D_s)$ is said to \emph{compute a diagram $D$}, if $D$ is isomorphic to a (not necessarily full) subdiagram of $D_s$. In particular, an object in $\cC$ is computed by $(D_0,\dots,D_s)$ if an object isomorphic to it appears in $D_s$. 
\end{definition}

\begin{remark}
\label{rem:categorical-computation}
Note that  in Definition~\ref{def_categorical-computation} we are not assuming
that all limits or colimits of finite diagrams exist but in each  particular computation 
$(D_0,\dots,D_s)$, for each $i, 0 \leq i < s$, the limit or colimit of the subdiagram
$D_i|_{J_i}$ of $D_i$
is assumed to exist. 
Thus, the notion of a limit/colimit
computation is still well defined even if the category $\cC$ does not admit limits or colimits of all finite diagrams. See also Remark~\ref{rem:uncomputable} below.
\end{remark}

 \begin{remark}
 \label{rem:constructive}
 One could take Parts \eqref{itemlabel:def:categorical-computation:1} and \eqref{itemlabel:def:categorical-computation:2}
 as the definition of limit (respectively, colimit) computation. However, in order that our notion of categorical complexity is closer to the classical notions -- such as circuit complexity  in certain  relevant categories (see Section \ref{sec:circuits}), 
we also consider the constructivity condition; which roughly means that the limit (or colimit) computation does not forget how an object was constructed. It also prevents objects of exponential rank/size from being constructed; cf.~Example~\ref{ex_nonconstructiveexample}.
 \end{remark}

\begin{remark}
\label{rem:uncomputable}
Of course, one may not be able to obtain every object, morphism or diagram from a given set of basic morphisms $\cA$ in a category $\cC$. We will think of such objects/morphisms/diagrams as non-computable in $\cC$ with respect to $\cA$. 
\end{remark}

We now describe a basic syntax for writing down the limit or colimit computations. The computation is described by a set expressions, each expression in a line. The first kind of expression is of the form  
$$\code{i. source}\code{,}f\code{,target}$$
and describes objects and/or morphisms that are added to $D_0$. Here, $\code{i}$ is an identifier that can be any string. In subsequent lines, the identifier `\code{i}' is used to refer to the source, and `\code{i'}' is used to refer to the target of the basic morphism $f \in \cA$ that is added to $D_0$ by this expression. \code{source} and \code{target} are the identifiers of the vertices which are the intended source and target of the new morphism being attached to $D_0$. 
 
If the source is a new vertex that did not exist in the diagram before, then we write \code{i.\,i}\code{,}$f$\code{,target}, or \code{i.\,\_}\code{,}$f$\code{,target} for it. If only the target is new, we write \code{i.\,source}\code{,}$f$\code{,i'} or \code{i.\,source}\code{,}$f$\code{,\_}; we write \code{i.\,i}\code{,}$f$\code{,i'} or \code{i.\,\_}\code{,}$f$\code{,\_} if both are new, distinct vertices,
and \code{i.\,\_}\code{,}$f$\code{,i} if both are new and are the same vertex.

There is no need to list all the morphisms in $D_0$ at the beginning, so we will have these steps as intermediate steps as well; as long as the morphisms attach only to other vertices in $D_0$, they be can be considered as part of $D_0$. 

The second kind of expression are those of the form:
$$\code{i. lim(a,b,\dots)}$$
which describe steps where a limit is added to the subdiagram. The identifiers $\code{a,b,\dots}$ describe the vertices in the 
subdiagram whose limit is being taken. In subsequent steps, \code{i} is used to refer to the limit added. Similarly, we write 
$\code{i. colim(a,b,\dots)}$
for describing colimit computations. We may use the notation \code{i->a} to refer to morphisms created during the computation.   

We start with two basic examples about constructions in the category of sets. 

\begin{example}\label{ex_set} 
Let $\cC$ be the category of sets and let $\cA$ consist of a single morphism $\op{id}: \left\{ 1 \right\} \to \left\{ 1 \right\}$. 
Consider the colimit computation described by
  \begin{tabularx}{0.5\textwidth}{ll}
    \code{1.} & \code{\_,}$\left\{ 1 \right\} \xtonormal{\id} \left\{1
    \right\}$\code{,1} \\
    \code{2.} & \code{\_,}$\left\{ 1 \right\} \xtonormal{\id} \left\{1
    \right\}$\code{,2} \\
    \dots & \\
    \code{n.} & \code{\_,}$\left\{ 1 \right\} \xtonormal{\id} \left\{1
    \right\}$\code{,n} \\
  \code{n+1.} & \code{colim(1,2,\dots,n)} 
\end{tabularx}
For each $k\leq n$, the step \code{k.} \code{\_,}$\left\{ 1 \right\} \xto{\id} \left\{1 \right\}$\code{,k} is adding a new copy of $\left\{ 1 \right\}$ to the diagram  (i.e. the vertex labelled $\texttt{k}$ with a self-loop corresponding to $\mathrm{id}$). In the end, \code{n+1} is the set with $n$ elements. 
\end{example}

\begin{example}\label{ex_setmorphism}
  Continuing with the previous example, we now make a colimit computation that produces the morphism $\left\{ 0,1,2 \right\} \xto{f} \left\{ 0,1,2 \right\}$ in the category of sets, where $f(0) = 0$, $f(1) = 0$, $f(2) = 1$.

\begin{minipage}[c]{0.5\textwidth}
\includegraphics[width=2.6in]{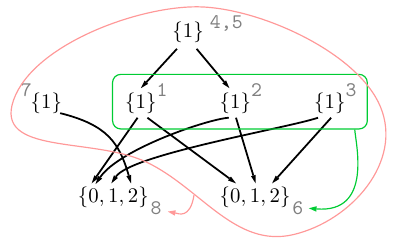}
\end{minipage}
\begin{minipage}[c]{0.5\textwidth}
\begin{tabularx}{0.5\textwidth}{ll}
    \code{1.} & \code{\_,}$\left\{ 1 \right\} \xtonormal{\id} \left\{1
    \right\}$\code{,1} \\
    \code{2.} & \code{\_,}$\left\{ 1 \right\} \xtonormal{\id} \left\{1
    \right\}$\code{,2} \\
    \code{3.} & \code{\_,}$\left\{ 1 \right\} \xtonormal{\id} \left\{1
    \right\}$\code{,3} \\
    \code{4.} & \code{\_,}$\left\{ 1 \right\} \xtonormal{\id} \left\{1
    \right\}$\code{,1} \\
    \code{5.} & \code{4,}$\left\{ 1 \right\} \xtonormal{\id} \left\{1
    \right\}$\code{,2} \\
    \code{6.} & \code{colim(1,2,3)}\\ 
    \code{7.} & \code{\_,}$\left\{ 1 \right\} \xtonormal{\id} \left\{1
    \right\}$\code{,7} \\
    \code{8.} & \code{colim(1,2,3,4,6,7)}\\ 
\end{tabularx}
\end{minipage}
\vspace{0.1in}

The morphism \code{6->8} is $f$, in the sense that the full-subdiagram containing \code{6} and \code{8} is isomorphic to $\left\{ 0,1,2 \right\} \xto{f} \left\{ 0,1,2 \right\}$.  
\end{example}

We will come back to sets later. We now discuss a more detailed example where we annotated each step in the computation. 

\begin{example}\label{ex_kvect-1}
  Consider the category $\opcat{Vect}_k$ of vector spaces over a field $k$. Let $\cA$ consist of the scalar multiplication morphisms $k \xto{c} k$ for each $c\in k$, the addition morphism $k^2 \xto{+} k$, 
  the two projections $\pi_1,\pi_2:k^{2} \to k$, and morphisms $0 \to k$, $k\to 0$. Say, the characteristic of $k$ is $0$ and we wish to compute the morphism 
$f: k^3 \to k^2$, $f(x,y,z) = (2x+2y+3z,y+z)$. 
  We describe the computation as follows.

\begin{minipage}[c]{0.5\textwidth}
\includegraphics[width=2.4in]{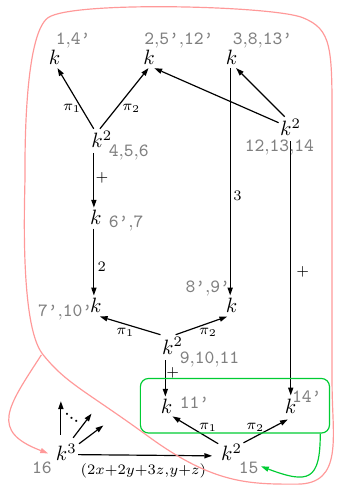}
\end{minipage}
\begin{minipage}[c]{0.5\textwidth}
\begin{tabularx}{0.5\textwidth}{ll}
  \code{1.} \code{\_,}$k \xtonormal{1} k$\code{,1}   \\ 
  \code{2.} \code{\_,}$k \xtonormal{1} k$\code{,2} \\ 
  \code{3.} \code{\_,}$k \xtonormal{1} k$\code{,3} \\
  \code{4.} \code{\_,}$k\times k\xtonormal{\pi_1} k$,\code{1} \\ 
  \code{5.} \code{4,}$k \times k \xtonormal{\pi_2} k$,\code{2} \\ 
  \code{6.} \code{4,}$k\times k \xtonormal{+} k$\code{,6'} \\
  \code{7.} \code{6',}$k \xtonormal{2} k$\code{,7'} \\
  \code{8.} \code{3,}$k \xtonormal{3} k$\code{,8'} \\
  \code{9.} \code{\_,}$k\times k\xtonormal{\pi_2} k$,\code{8'} \\ 
  \code{10.} \code{9,}$k\times k\xtonormal{\pi_1} k$,\code{7'} \\ 
  \code{11.} \code{9,}$k\times k \xtonormal{+} k$\code{,11'} \\
  \code{12.} \code{\_,}$k\times k\xtonormal{\pi_1} k$,\code{2} \\ 
  \code{13.} \code{12,}$k \times k \xtonormal{\pi_2} k$,\code{3} \\ 
  \code{14.} \code{12,}$k\times k \xtonormal{+} k$\code{,14'} \\
  \code{15.} \code{lim(11',14')} \\
  \code{16.} \code{lim(1,1',2,2',\dots} \\ 
  $\,\,\,\,\,\,\,\,\,\,\,\,\,\,$ \code{\dots,14,14',15)} \\
\end{tabularx}
\end{minipage}
\end{example}

\begin{remark}\label{rem_compositionsanduniversal}
Two facts about limits and colimits are useful in thinking about the above example and other computations. 

The first  (as already noted in Example \ref{eg:examples-of-limits}) is that  if $f: X\to Y$ and $g:Y\to Z$ are morphisms, then the limit of the diagram $X \xto{f} Y \xto{g} Z$ is (isomorphic to) $X$, 
 and the induced morphism $X \rightarrow Z$ is equal to the composition $g \circ f$. So, compositions are obtained using limits.
 \hide{
The second is that if we take the limit $L$ of a diagram $D:I = (V,E,s,t) \to \cC$, and $X$ is a cone over $D$, 
then,  the limit  of these two diagrams joined together produces an object isomorphic to $X$. The map $X\to L$ is then the map that would normally come from the universal property of the limit $L$. 
}
The second fact is the following. Let $D:I = (V,E,s,t)\to \cC$ be a diagram, 
and $L = \lim D$, and let 
$\varphi:C \to D$ a cone over $D$, where $C$ is a constant diagram of shape $I$ and value $X$. 
Let $I'=(V',E,s',t')$ be the graph, with $V' = V \cup \{v_0,v_1\}$, $E' = E \cup \bigcup_{v \in V} \{e_v,e'_v\}$,
\[
\left.
\begin{array}{lll}
s'(e) &=& s(e), \\
t'(e)  &=& t(e),
\end{array}
\right\} \mbox{ for $e \in E$,}
\]
$$\displaylines{
\left.
 \left.
\begin{array}{lll}
s'(e) &=& v_0, \\
t'(e)  &=& v,
\end{array}
\right\} \mbox{ for $e = e_v$,}  
\left.
\begin{array}{lll}
s'(e) &=& v_1, \\
t'(e)  &=& v,
\end{array}
\right\} \mbox{ for $e = e_v'$,}
\right\} \mbox{ for $v \in V$},
}
$$
and $D':I' \to \cC$ the diagram defined by,
\[
\begin{array}{lll}
D'(v) &=& D(v), \mbox{ for $v\in V$,}\\
D'(v_0)&=& L,\\
D'(v_1) &=& X,
\end{array}
\]
\[
\begin{array}{lll}
D'(e) &=& D(e), \mbox{ for $e \in E$,}\\
D'(e) &=& (\lim D)_v, \mbox{ for $e = e_v, v \in V$,}\\
D'(e) &=& \varphi_v, \mbox{ for $e = e_v', v \in V$.}
\end{array}
\]
Then, $\lim D'$ is isomorphic to $X$, and morphism $X\to L$ in this corresponding limit cone, is the unique morphism coming from the  universal property of the limit $L$. 
Thus, in a limit computation, once the cones corresponding to $L$ and $X$ are computed,
in order to obtain the morphism $X \rightarrow L$, that is implied by the universal property of limits, 
one needs to take just one additional limit.  

A similar fact holds for colimits (with the arrows reversed). In other words, 
let $D:I = (V,E,s,t)\to \cC$ be a diagram, 
and $M = \colim D$, and let 
$\varphi:D \to C$ a cocone over $D$, where $C$ is a constant diagram of shape $I$ and value $Y$. 
Then, in a colimit computation, once the cocones corresponding to $M$ and $Y$ are computed,
in order to obtain the morphism $M \rightarrow Y$, that is implied by the universal property of colimits, 
one needs to take just one additional colimit.
\begin{figure}
\hspace{-1.5in}\includegraphics[scale=0.7]{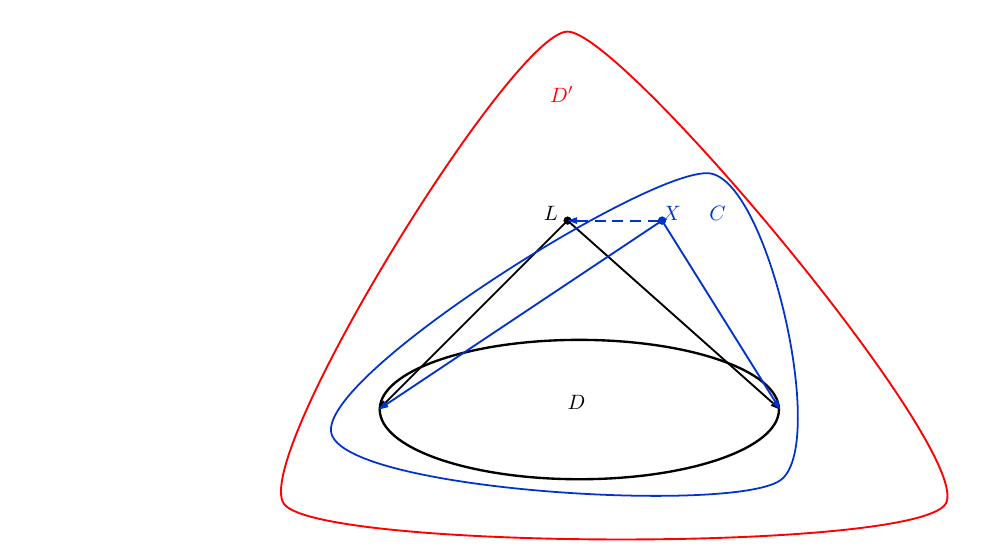}
\caption{One extra limit}
\label{fig:rem_compositionsanduniversal}
\end{figure}
\end{remark}

\subsection{Mixed limit-colimit computations} 
We now discuss computations where we can use limits and colimits together, we call these \emph{mixed computations}. 

\begin{definition}
  A \emph{mixed computation} is a finite sequence $(D_0,\dots,D_s)$ of diagrams with $D_{i}: I_{i} \to U(\cC)$, where $D_0$ consists only of morphisms in $\cA$, and for each $i=1,\dots,s$, $D_{i}$ is obtained from $D_{i-1}$ by adding either the limit of a subdiagram, with the corresponding cone morphisms or colimit of a subdiagram with the corresponding cocone morphisms.
\end{definition}

Note that there is no constructivity assumption for mixed computations. To include it would have been too restrictive and would have prevented natural applications like gluing geometric objects already constructed.  

\begin{example}[Monotone Boolean Circuits]\label{ex_monotoneboolean} Let $\cB_n$ be the lattice of subsets of $\left\{ 0,1 \right\}^{n}$, that is a category whose objects are the subsets of $\left\{ 0,1 \right\}^{n}$, and with $\Hom_{\cB_n}(A,B) = \left\{ \iota \right\}$ if $A\subset B$ where $\iota:A \to B$ is the inclusion, and $\Hom_{\cB_n}(A,B)=\emptyset$ otherwise. Let $Z_i = \left\{ (x_1,\dots,x_n)\in\left\{ 0,1 \right\}^{n} \,\,|\,\,x_i=1 \right\}$. Let $\cA_n$ be the set of basic morphisms $\{\id_{Z_i}\,\,|\,\,i=1,\dots,n\}$. Let $\cB = \coprod_{n=1}^{\infty}\cB_n$ be the disjoint union of these categories and $\cA = \coprod_{n=1}^{\infty}\cA_n$. 
  
  We show that there is a correspondence between multi-output monotone Boolean circuits with $n$ inputs and mixed computations in $\cB_n$. Given a monotone Boolean circuit, consider the corresponding straight line program of with Boolean operations. Start a mixed computation in $\cB_n$ with a copy each of the subsets $Z_{i}$. These correspond to the input variables $z_1,\dots,z_n$ of the straight line program. Subsequent entries $z_{n+1},z_{n+2}\dots$ will correspond to newly constructed objects in the mixed computation. For each operation in the straight line program of the form $z_i = z_j \wedge z_k$, take the limit of the objects corresponding to $z_j$ and $z_k$; similarly, take the colimit for $z_i = z_j \vee z_k$.
  To make a straight-line program from a mixed computation, start with the input variables $z_1,\dots,z_n$ and add $k-1$ new $\wedge$ operations for each limit of $k$ objects and a $k-1$ new $\vee$ operations for each colimit of $k$ objects (ignoring the arrows does not change the limit/colimit). 
 
  Thus, mixed computations in $\cB_{n}$ are in direct correspondence with monotone straight-line programs or, equivalently, monotone Boolean circuits. 
\end{example}

\begin{example}\label{ex_cubical}
  Consider the category $\opcat{Top}$ of topological spaces. Let $I=\left[ 0,1 \right]$ be the unit interval, and let the basic morphisms consist of, $I \xto{\id} I$,$I\to pt$, $pt\xto{0} I$, and $pt \xto{1} I$. We can build all cubes using limits, and then can glue these using colimits to 
  construct
  many topological spaces. 
\end{example}

We will reconsider mixed computations when we look at the complexity of projective schemes.  

\subsection{Cost and complexity}
Let $c_0: \cA \to \mathbb{Z}_{\geq 0}$ be any function, considered as the \emph{cost} of the basic morphisms.  

\begin{definition}
  The \emph{cost} of the computation $(D_0,\dots,D_s)$ is the the number of steps plus the cost of the initial diagram $D_0$ consisting of basic morphisms, that is
  $$ c(D_0,\dots,D_s) = s + \sum_{f\in \operatorname{edges}(I_0)} c_0(D_0(f)). $$

\end{definition}
If $c_0$ is not specified, then we consider it to be the constant function $1$, so every basic morphism will have unit cost. This will be the case in almost every example we consider.  

\begin{definition}
\label{def:categorical-complexity}
  The \emph{limit}  (respectively, \emph{colimit}, respectively, \emph{mixed}) {complexity},
 $c^{\op{lim}}_{\cC, \cA,c_0}(D)$ (respectively, $c^{\op{colim}}_{\cC, \cA,c_0}(D)$, respectively, $c^{\op{mixed}}_{\cC, \cA,c_0}(D)$), of a diagram $D$ in a category $\cC$ is the cost of the limit (respectively, colimit, respectively, mixed) computation using basic morphisms $\cA$, that has the smallest cost among all such computations that compute $D$. 
When, $c_0$ is the constant function $1$, we will omit it from the subscript and just write
$c^{\op{lim}}_{\cC, \cA}(D)$, $c^{\op{colim}}_{\cC, \cA}(D)$ or
$c^{\op{mixed}}_{\cC, \cA}(D)$.
 
 \hide{
  The \emph{ constructive limit}  (respectively, \emph{constructive colimit}) {complexity}, 
  $c^{\clim}_{\cC, \cA,c}(D)$ (respectively, $c^{\ccolim}_{\cC, \cA,c}(D)$), of a diagram $D$ in a category $\cC$ is the cost of the constructive limit (respectively, constructive colimit) computation using basic morphisms $\cA$, that has the smallest cost among all such computations that compute $D$. 
 }
 
For a morphism $f: X \rightarrow Y$ in $\cC$, the complexity $c(f)$ of $f$ is the complexity of the corresponding diagram mapping two objects and a single morphism $X \xto{f} Y$. 
For an object $X$ in $\cC$, the complexity $c(X)$ of $X$ is the complexity of the diagram with one object, $X$. 
\end{definition}

\begin{example}[Gluing Simplices]\label{ex_simplicialcomplexes}
  Let $\cC = \opcat{Top}$ be the category of topological spaces and let $\cA$ be the set of of all face embeddings 
 $ \Delta_{n}\hookrightarrow \Delta_{m}$
 corresponding to each strictly increasing maps $[n] \rightarrow [m]$, 
 where $\Delta_{n}$ is the 
 standard
 $n$-simplex. Colimit computations correspond to gluing operations between simplices. The colimit complexity then measures how many simplices are needed to construct a given topological space by gluing.  
\end{example}

We now go back to considering $\cC = \opcat{Set}$ with the basic morphisms $\cA$ consisting of a single morphism $\op{id}: \left\{ 1 \right\} \to \left\{ 1 \right\}$.

\begin{proposition}[Colimit complexity of sets]\label{prop:sets}
In the category 
$\opcat{Set}$, 
let 
\[
\cA = \left\{ \op{id}: \left\{ 1 \right\} \to \left\{ 1 \right\} \right\}, c_0(\id)=1.
\] 
Then,
for any set finite set $S$, 
\[
c^{\op{colim}}_{\opcat{Set},\cA}(S) = \card(S)+1.
\]
\end{proposition}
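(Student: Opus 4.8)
The plan is to prove the two inequalities $c^{\op{colim}}_{\opcat{Set},\cA}(S)\le\card(S)+1$ and $c^{\op{colim}}_{\opcat{Set},\cA}(S)\ge\card(S)+1$ separately. The upper bound is immediate from the explicit computation already written down in Example~\ref{ex_set}: if $\card(S)=n$, attach $n$ copies of the basic morphism $\id\colon\{1\}\to\{1\}$ to $D_0$ (cost $n$) and then perform a single colimit step on the full subdiagram spanned by the $n$ new vertices. That subdiagram is $n$ disjoint copies of $\{1\}$ with their identity self-loops, so its colimit is the $n$-element coproduct, which is isomorphic to $S$; the total cost is $n+1$. (For $n=0$ one uses no basic morphisms and one colimit step on the empty subdiagram, whose colimit is the initial object $\emptyset$, of cost $1$.) All the content is in the lower bound, and this is where the constructivity condition is essential.

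For the lower bound, fix any colimit computation $(D_0,\dots,D_s)$ computing $S$, say $D_s(v^\ast)\cong S$, and let $b$ be the number of basic morphisms in $D_0$, so its total cost is $c=s+b$. The key claim is that $\card(D_s(v))\le b$ for every vertex $v$ of $D_s$. Since the value of a vertex never changes after it is created, if $v$ is produced at step $i$ as the colimit of $D_{i-1}|_{J_i}$ then $D_s(v)=\colim(D_{i-1}|_{J_i})$. Now suppose $J_i$ contains a vertex $u$ that was itself produced as a colimit at some earlier step, from a subdiagram $J$. Constructivity forces $J$ to be a subgraph of $J_i$, and the cocone edges from $J$ to $u$ (added when $u$ was created) then belong to the full subdiagram $D_{i-1}|_{J_i}$; moreover no edges among the vertices of $J$ are created after $u$, so $u$ is the colimit of the full subdiagram of $D_{i-1}|_{J_i}$ on those vertices, carrying its universal cocone. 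By the standard fact that adjoining to a diagram the colimit of a full subdiagram together with its universal cocone leaves the overall colimit unchanged, we may delete such a $u$ (and its cocone edges) from $D_{i-1}|_{J_i}$ without changing its colimit. Iterating this deletion (removing later-created colimit vertices first) reduces $D_{i-1}|_{J_i}$ to $D_0$ restricted to the set $B(v)$ of basic-morphism vertices lying below $v$. Since $\cA$ consists only of $\id\colon\{1\}\to\{1\}$, this residual diagram is a collection of copies of $\{1\}$ joined by identity maps, so its colimit is a quotient of a coproduct of $\card(B(v))$ copies of $\{1\}$ and hence has at most $\card(B(v))\le b$ elements (each basic morphism contributing the single vertex $\{1\}$, as in Example~\ref{ex_set}). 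This proves the key claim.

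Applying the claim to $v^\ast$ gives $\card(S)\le b$, hence $c=s+b\ge s+\card(S)$; and it remains to note that $s\ge 1$. Indeed, if $s=0$ then $D_s=D_0$ and every vertex of $D_0$ has value $\{1\}$, so the only sets it can compute are one-element sets; thus whenever $\card(S)\ne 1$ at least one colimit step is forced (in particular for $\card(S)=0$, since $\emptyset$ is not among the objects of $D_0$). Therefore $c\ge\card(S)+1$, and together with the upper bound this yields the claimed equality. (The degenerate case $\card(S)\le 1$ is checked directly.)

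The step I expect to be the main obstacle is making the "deletion" reduction in the lower bound precise — that is, showing that under constructivity an iterated colimit collapses to the single colimit of the underlying diagram of basic morphisms, so that reusing intermediate colimit objects cannot inflate cardinalities beyond $b$. This is exactly the blow-up phenomenon that the constructivity axiom was designed to rule out (cf.\ Remark~\ref{rem:constructive}); without it the crude estimate $\card(\colim D)\le\sum_v\card(D(v))$ is far too weak, since the subdiagrams used at different steps may share vertices and be referenced repeatedly.
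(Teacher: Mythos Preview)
Your proof is correct and takes essentially the same approach as the paper: the paper packages your inline ``deletion under constructivity'' reduction into Lemma~\ref{lem:constructivecomputation} (any object produced by a colimit computation is the colimit of a subdiagram of $D_0$ consisting only of basic morphisms), and then concludes in one line that $n$ copies of $\{1\}$ are needed. Your version simply spells out the proof of that lemma and does the final bookkeeping ($b\ge\card(S)$, $s\ge 1$) more carefully than the paper's brief invocation.
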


\begin{proof}
  Since finite sets of equal size are isomorphic, a computation will compute $S$ if and only if it computes any set of cardinality equal to 
  $\card(S)$. As in Example~\ref{ex_set}, starting with $\card(S)$ copies of $\left\{ 1 \right\}$ and taking their colimit, we get a set of cardinality
   $\card(S)$. So, the complexity is bounded from above by $\card(S)+1$. 
  To see that this is the most efficient way of producing a set with $\card(S)$ elements, we use Lemma~\ref{lem:constructivecomputation} below, which states that if we only care about building a single object, then a colimit computation can be replaced by a single colimit on $D_0$ consisting of basic morphisms.
Since the identity on $\left\{ 1 \right\}$ is the only basic morphism in this
case, taking the colimit of $\card(S)$ copies of $\left\{ 1 \right\}$ is the most
efficient way to obtain an object isomorphic to $S$. 
\end{proof}

\hide{
  \begin{example}[Non-constructive colimit complexity]\label{ex_nonconstructiveexample}
The following example shows the difference between colimit computations and non-constructive computations. Consider the computation
  \begin{tabularx}{0.5\textwidth}{ll}
    \code{1.} & \code{\_,}$\left\{ 1 \right\} \xtonormal{\id} \left\{1
    \right\}$\code{,1} \\
    \code{2.} & \code{\_,}$\left\{ 1 \right\} \xtonormal{\id} \left\{1
    \right\}$\code{,2} \\
  \code{3.} & \code{colim(1,2)} \\
  \code{4.} & \code{colim(1,2)} \\
  \dots & \\
  \code{a+2.} & \code{colim(1,2)} \\
  \code{a+3.} & \code{colim(3,4,\dots,a+2)} \\
\end{tabularx}

This computation produces a set of size 
$2a$.
Observe that only the step $\code{a+3}$ is not constructive. To make it constructive, one would need to include \code{1} and \code{2} in the colimit in step \code{a+3}, which would make this colimit be a set with just two elements. 
\end{example}
}

\begin{example}[Non-constructive colimit complexity]\label{ex_nonconstructiveexample}
The following example shows the difference between colimit computations and non-constructive computations. Consider the computation
  \begin{tabularx}{0.5\textwidth}{ll}
    \code{1.} & \code{\_,}$\left\{ 1 \right\} \xtonormal{\id} \left\{1
    \right\}$\code{,1} \\
    \code{2.} & \code{\_,}$\left\{ 1 \right\} \xtonormal{\id} \left\{1
    \right\}$\code{,2} \\
  \code{3.} & \code{colim(1,2)} \\
  \code{4.} & \code{colim(1,2)} \\
  \code{5.} & \code{colim(3,4)} \\
  \code{6.} & \code{colim(3,4)} \\
  \vdots & \vdots\\
  \code{2a-1.} & \code{colim(2a-3,2a-2)} \\
  \code{2a.} & \code{colim(2a-3,2a-2)} \\
  \code{2a+1.}& \code{colim(2a-1,2a)} \\
\end{tabularx}

This computation produces a set of size $2^{a}$. 
Observe that the steps $\code{5}$ to $\code{2a+1}$ are not constructive. To make them constructive, one would need to include
$\code{1}, \ldots \code{2i-2}$
 in the colimit in step $\code{2i-1}, 3 \leq i \leq a+1$,
and include
$\code{1}, \ldots \code{2i-2}$
in the colimit in step $\code{2i}, 3 \leq i \leq a$.
This would produce in  step $\code{2a+1}$ a set with just two elements. 
\end{example}

\begin{example}[Non-constructive mixed complexity]\label{ex_nonconstructive-mixed-example}
Using mixed computations, one can construct a set of doubly exponential complexity.
Consider for example the following mixed computation.

 \begin{tabularx}{0.5\textwidth}{ll}
    \code{1.} & \code{\_,}$\left\{ 1 \right\} \xtonormal{\id} \left\{1
    \right\}$\code{,1} \\
    \code{2.} & \code{\_,}$\left\{ 1 \right\} \xtonormal{\id} \left\{1
    \right\}$\code{,2} \\
  \code{3.} & \code{colim(1,2)} \\
  \code{4.} & \code{colim(1,2)} \\
  \code{5.} & \code{lim(3,4)} \\
  \code{6.} & \code{lim(3,4)} \\
  \vdots & \vdots\\
  \code{2a-1.} & \code{lim(2a-3,2a-2)} \\
  \code{2a.} & \code{lim(2a-3,2a-2)} \\
  \code{2a+1.}& \code{lim(2a-1,2a)} \\
\end{tabularx}
It is easy to check that it produces a set of cardinality $2^{2^a}$.
So the mixed complexity of a finite set $\card(S)$ is $\bigO(\log\log\card(S))$.
\end{example}

\hide{
\begin{remark}\label{rem_setmixed}
It is possible to give an example similar to the above for mixed computations by alternating limits and colimits to get double exponential set size starting with no basic morphisms, so the mixed complexity of a finite set $\card(S)$ is $\bigO(\log\log\card(S))$.
\end{remark}
}
One important feature of categorical complexity is that it allows one to define complexity of not just single objects or even morphisms of a given category (equipped with a set of basic morphisms), but one has a notion of complexity of arbitrary 
(finite) diagrams of the category as well. This last notion has no analog in classical theory of computational complexity.

We illustrate this feature in the following simple example.

\begin{example}
\label{eg:subspaces}
We will consider the colimit complexity  (in the category $\opcat{Vect}_k$ where $k$ is a field)
of two diagrams consisting of the inclusion morphisms of  $n^2$ different subspaces of dimension two  in a $2n$-dimensional $k$-vector space.

In the first diagram the subspaces are assumed to be generic  (Case \eqref{itemlabel:eg:subspaces:1} below), and we prove that the colimit complexity of the diagram is in $O(n^3)$ (which agrees with the intuition that to specify
$n^2$ inclusions of two dimensional subspaces in a $2n$-dimensional vector space we needs to specify a matrix in $k^{2n \times 2n^2}$).

In the second diagram  the subspaces are in a special position (Case \eqref{itemlabel:eg:subspaces:2} below), and  we prove that the colimit complexity of the diagram is in $O(n^2)$ (intuitively, only $2n$ distinct columns appear in the  $2n \times 2n^2$ corresponding to the inclusion if the basis vectors are chosen properly). Categorical complexity helps in quantifying the distinction in the complexity of the two diagrams having the same shape. There is no analog in classical computational complexity, which deals mainly with membership questions  (and thus from the point of view of category theory complexities of objects rather than general diagrams) of this kind of distinction.

\hide{   
We consider the category $\opcat{Vect}_k$ equipped with the same set of basic morphisms
as defined in Example \ref{ex_kvect-1}. 
}

We consider the category $\opcat{Vect}_k$ of vector spaces over a field $k$ and let $\cA$ consist of the scalar multiplication morphisms $k \xto{c} k$ for each $c\in k$, the addition morphism $k^2 \xto{+} k$, 
  the two projections $\pi_1,\pi_2:k^{2} \to k$, and morphisms $0 \to k$, $k\to 0$ (as in Example \ref{ex_kvect-1}).
 
\begin{enumerate}[(a)]
\item
\label{itemlabel:eg:subspaces:1}
Let  $n \geq 2$, $S_1,\ldots,S_{n^2} \subset V$ be subspaces of a finite dimensional $k$-vector space $V$, 
and suppose that
$\dim V = 2n$,  $\dim S_i = 2, 1 \leq i \leq n^2$, and
$\dim (S_i \cap S_j) = 0, 1 \leq i < j \leq n^2$.

Consider the diagram shown in Figure \ref{fig:subspace}, 
where the $\phi_i,1 \leq i \leq n^2$, are the inclusion homomorphisms.

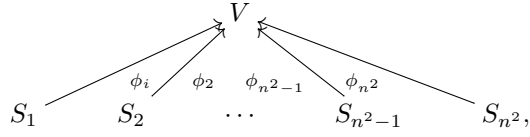
\begin{figure}
\[
\xymatrix{
&&V&& \\
S_1\ar[rru]_{\phi_i}& S_2\ar[ru]_{\phi_2} &\cdots & S_{n^2-1}\ar[lu]^{\phi_{n^2-1}}& S_{n^2} \ar[llu]^{\phi_{n^2}},
} 
\]
\caption{Subspace diagram}
\label{fig:subspace}
\end{figure}

We now show to produce the above diagram in Figure \ref{fig:subspace}
using a  colimit computation.

\hide{
Let for each $i$, $1 \leq i \leq n^2$, 
$S_i$
be the direct sum of the one-dimensional subspaces $S_i',S_i''$, and let
$\psi_i',\psi_i''$ denote the inclusions of 
$S_i',S_i''$ 
into $V$ respectively.

Similarly, let $V$ be the direct sum of one dimensional subspaces $E_1,\ldots,E_{2n}$. Then $V = E_1 \oplus \cdots \oplus E_{2n}$.
Moreover, $V$ is also canonically isomorphic to the direct product $E_1 \times \cdots \times E_{2n}$ and let $\pi_i:V \rightarrow E_i$
denote the canonical projections.

For each $1 \leq i \leq 2n, 1 \leq j \leq n^2 $ we first use colimit computation to obtain a diagram computing  the $2n^3$ linear morphisms,
$\phi_{i,j} = \pi_i \circ (\psi_j'\oplus \psi_j''):  S_j' \oplus S_j''  \rightarrow E_i$, to obtain the diagram shown in Figure \ref{fig:subspaces-new-1}.

The colimit computation to obtain the diagram shown in Figure \ref{fig:subspaces-new-1} is as follows. 
For each $j, 1\leq j \leq n$, first take the colimit of  $S_{j}'$ and $S_{j}''$ to obtain the diagram,
\[
\xymatrix{
S_j' \ar[rd]&& S_j''\ar[ld] \\
& S_j' \oplus S_j''& 
}
\]
and then successively for each $i,1 \leq i \leq  2 n$,  take colimits of the diagrams
\[
\xymatrix{
&E_i& \\
S_j' \ar[rd]\ar[ru]^{\pi_i\circ\psi_j'}&& S_j''\ar[ld] \ar[ul]_{\pi_i\circ\psi_j''}\\
& S_j' \oplus S_j'' \ar@{.>}[uu]^{\phi_{i,j}}& 
}
\]
and observe (cf. Remark \ref{rem_compositionsanduniversal}) that we obtain the morphisms $\phi_{i,j}$.
Note that the morphisms $\pi_i\circ\psi_j', \pi_i\circ\psi_j''$, being morphisms between one dimensional $k$-vector spaces are
basic.
It is easy to check that the total cost of the colimit computation described above   is 
\[
2n^3 + n^2 + 2n^3 =  4n^3 + n^2.
\]

\hide{
Thus, while both arrangements of linear subspaces $L_1,\ldots, L_{n^2} \subset V$ contain the same number of subspaces, and would require the 
same number of polynomials to define (say using a first order formula in the language of the reals),   the second one is ``simpler''
than the first in terms of its categorical colimit complexity. 
}

\begin{figure}
\includegraphics[scale=0.7]{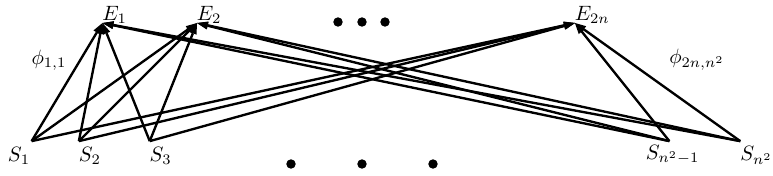}
\caption{Subspace diagram before  taking the last colimit}
\label{fig:subspaces-new-1}
\end{figure}

We then take the colimit of the whole diagram computed till now (as shown in Figure \ref{fig:subspaces-new-2}), and obtain the desired diagram as a subdiagram (whose morphisms are shown in blue in Figure \ref{fig:subspaces-new-2}).
\begin{figure}
\includegraphics[scale=0.7]{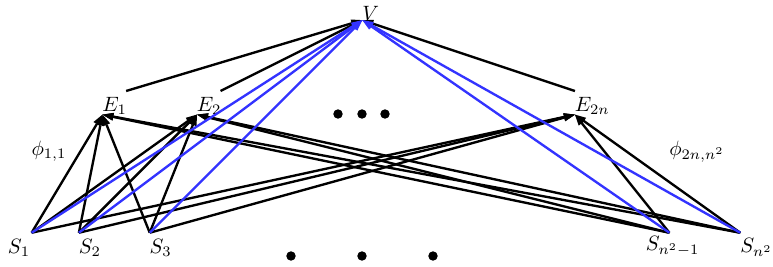}
\caption{Diagram after taking  the last colimit}
\label{fig:subspaces-new-2}
\end{figure}

The total cost is $4n^3+ n^2 + 1 = O(n^3)$.
}

Let for each $i$, $1 \leq i \leq n^2$, 
$S_i',S_i''$ be one-dimensional subspaces of $S_i$ such that 
$S_i = S_i' \oplus S_i''$,
and let
$\psi_i',\psi_i''$ denote the inclusions of 
$S_i',S_i''$ 
into $V$ respectively.

Similarly, let 
$E_1,\ldots,E_{2n} \subset V$ be one-dimensional subspaces of $V$ 
such that $V = E_1 \oplus \cdots \oplus E_{2n}$.
Then, $V$ is canonically isomorphic to the direct product $E_1 \times \cdots \times E_{2n}$ and let $\pi_i:V \rightarrow E_i$
denote the canonical projections.

For each $1 \leq i \leq 2n, 1 \leq j \leq n^2 $ we first use colimit computation to obtain a diagram computing  the $2n^3$ linear morphisms,
$\phi_{i,j} = \pi_i \circ (\psi_j'\oplus \psi_j''):  S_j' \oplus S_j''  \rightarrow E_i$, to obtain the diagram shown in Figure \ref{fig:subspaces-new-1}.

The colimit computation to obtain the diagram shown in Figure \ref{fig:subspaces-new-1} is as follows. 
For each $j, 1\leq j \leq n$, first take the colimit of  $S_{j}'$ and $S_{j}''$ to obtain the diagram,
\[
\xymatrix{
S_j' \ar[rd]&& S_j''\ar[ld] \\
& S_j' \oplus S_j''& 
}
\]
and then successively for each $i,1 \leq i \leq n$,  take colimits of the diagrams
\[
\xymatrix{
&E_i& \\
S_j' \ar[rd]\ar[ru]^{\pi_i\circ\psi_j'}&& S_j''\ar[ld] \ar[ul]_{\pi_i\circ\psi_j''}\\
& S_j' \oplus S_j'' \ar@{.>}[uu]^{\phi_{i,j}}& 
}
\]
and observe (cf. Remark \ref{rem_compositionsanduniversal}) that we obtain the morphisms $\phi_{i,j}$.
Note that the morphisms $\pi_i\circ\psi_j', \pi_i\circ\psi_j''$, being morphisms between one dimensional $k$-vector spaces are
basic.
It is easy to check that the total cost of the colimit computation described above   is 
\[
2n^3 + n^2 + 2n^3 =  4n^3 + n^2.
\]

\hide{
Thus, while both arrangements of linear subspaces $L_1,\ldots, L_{n^2} \subset V$ contain the same number of subspaces, and would require the 
same number of polynomials to define (say using a first order formula in the language of the reals),   the second one is ``simpler''
than the first in terms of its categorical colimit complexity. 
}

\begin{figure}
\includegraphics[scale=0.7]{subspaces-new-1.pdf}
\caption{Subspace diagram before  taking the last colimit}
\label{fig:subspaces-new-1}
\end{figure}

We then take the colimit of the whole diagram computed till now (as shown in Figure \ref{fig:subspaces-new-2}), and obtain the desired diagram as a subdiagram (whose morphisms are shown in blue in Figure \ref{fig:subspaces-new-2}).
\begin{figure}
\includegraphics[scale=0.7]{subspaces-new-2.pdf}
\caption{Diagram after taking  the last colimit}
\label{fig:subspaces-new-2}
\end{figure}

The total cost is $4n^3+ n^2 + 1 = O(n^3)$.

\item
\label{itemlabel:eg:subspaces:2}
We now consider subspaces $S_1,\ldots,S_{n^2}$ subspaces in a special position, and prove that  
colimit  complexity of the 
corresponding diagram can be much smaller.

Let $V = V' \oplus V''$, where $\dim V'=  \dim V'' = n$.
Let $L_i' \subset V', L_i'' \subset V'', 1 \leq i \leq n$ be subspaces with $\dim L_i '= \dim L_i'' = 1, 1 \leq i \leq n$, and
suppose that  
$S_1,\ldots,S_{n^2}$ are the subspaces $L_i' \oplus L_j'' \subset V = V' \oplus V'', 1 \leq i,j\leq n$. 

Let $V'$ (respectively, $V''$) be the direct sum of one dimensional subspaces $E_1',\ldots,E_{n}'$
(respectively, $E_1'',\ldots,E_n''$). Then $V' = E_1' \oplus \cdots \oplus E_{n}', V'' = E_1''\oplus \cdots\oplus E_n''$, and
moreover, $V'$ (respectively, $V''$) is also canonically isomorphic to the direct product $E_1' \times \cdots \times E_{n}'$ 
(respectively, $E_1''\times \cdots \times E_n''$). Let for $1 \leq i \leq n$, 
$\pi_i':V' \rightarrow E_i'$ (respectively, $\pi_i'':V'' \rightarrow E_i''$)
denote the canonical projections. Also, for $1 \leq j \leq n$, let $\psi_j':L_j' \rightarrow V'$ (respectively, $\psi_j'':L_j'' \rightarrow V''$) denote the inclusion
morphisms, and denote $\phi_{i,j}' = \pi_i' \circ \psi_j'$ (respectively, $\phi_{i,j}'' = \pi_i'' \circ \psi_j''$).

In this case, we can construct the diagram in Figure \ref{fig:subspace}  in the following way.

\begin{figure}
\hspace{-1.5in}\includegraphics[scale=0.7]{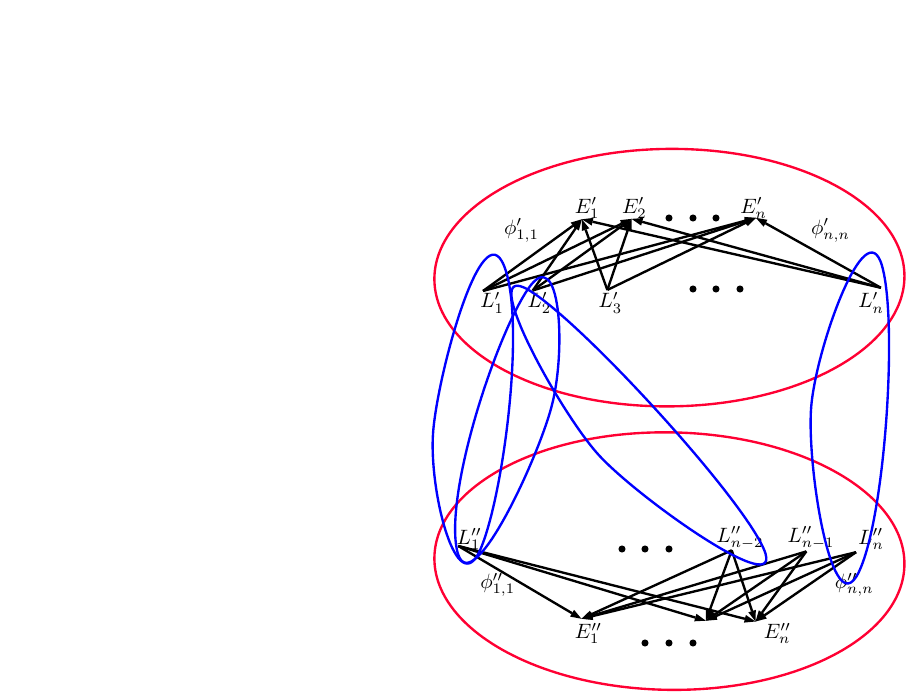}
\caption{Colimit computation of arrangement of subspaces in special position before taking any colimits.}
\label{fig:subspaces-new-3}
\end{figure}

\begin{figure}
\hspace{-1.5in}\includegraphics[scale=0.7]{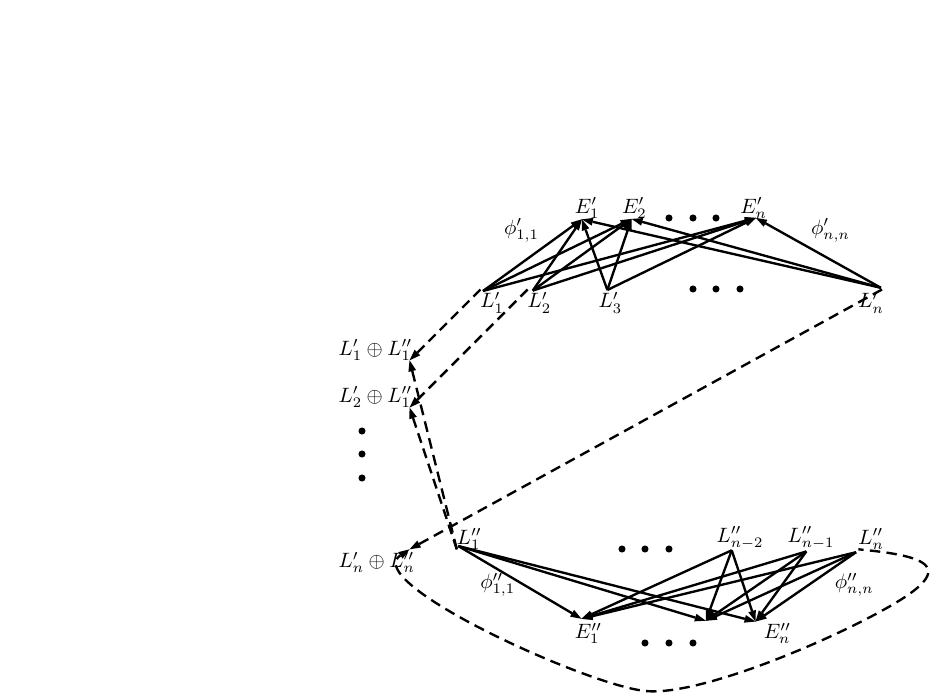}
\caption{Colimit computation of arrangement of subspaces in special position after taking $n^2$ colimits.}
\label{fig:subspaces-new-4}
\end{figure}

\begin{figure}
\hspace{-1.5in}\includegraphics[scale=0.7]{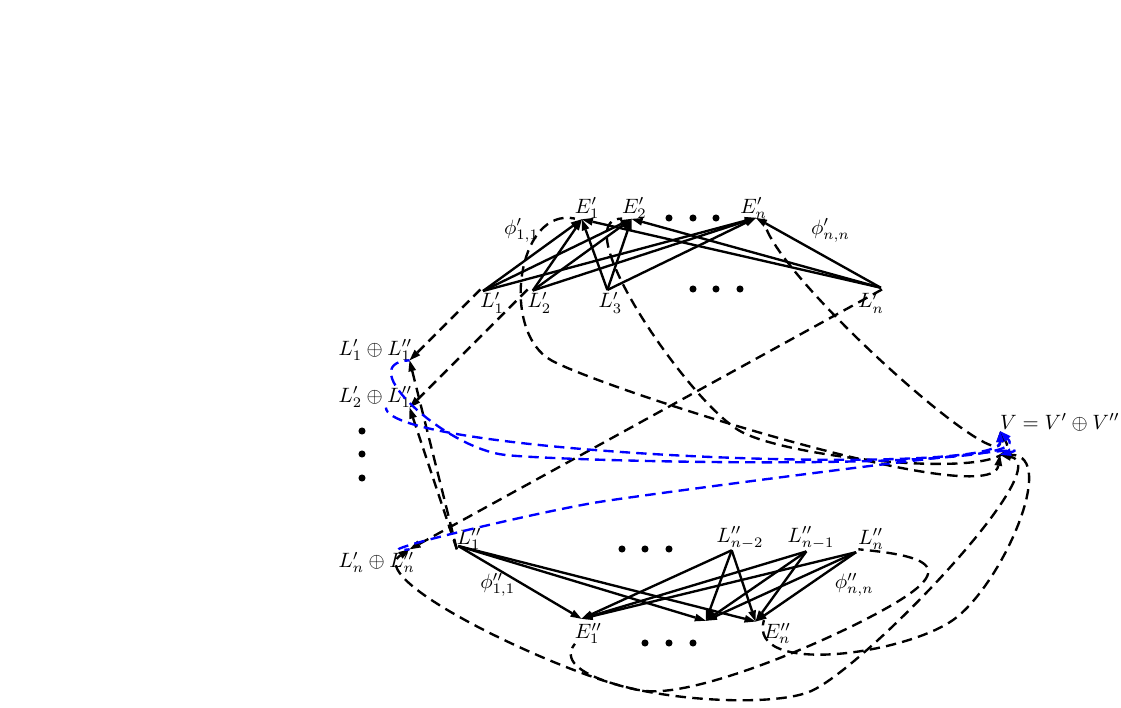}
\caption{Colimit computation of arrangement of subspaces in special position after taking the final colimit.}
\label{fig:subspaces-new-5}
\end{figure}

First construct 
the diagram containing the morphisms $\phi_{i,j}'$ and $\phi_{i,j}''$ as shown in Figure \ref{fig:subspaces-new-3}.
This costs $2n^2$.

Next take colimits of the $n^2$ subdiagrams circled in blue (corresponding to pairs of vertices $L_i,L_j', 1\leq i,j \leq n$) to obtain the diagram
shown in Figure \ref{fig:subspaces-new-4}. Finally, take a colimit of the entire diagram constructed so far to obtain the diagram 
shown in Figure \ref{fig:subspaces-new-5}. It is an easy exercise to check that the required diagram occurs as a subdiagram  (whose morphisms are shown using blue broken arrows in Figure \ref{fig:subspaces-new-5}) of the diagram so obtained.
The total cost is  $3n^2 +1 = O(n^2)$.
\end{enumerate}

Thus, while both arrangements of linear subspaces $S_1,\ldots, S_{n^2} \subset V$ 
in Parts \eqref{itemlabel:eg:subspaces:1} and \eqref{itemlabel:eg:subspaces:2}
contain the same number of subspaces, and would require the 
same number of polynomials to define (say using a first order formula in the language of the reals),   the second one is ``simpler''
than the first in terms of its categorical colimit complexity. 
\end{example}

\subsection{Useful facts about limit and colimit computations} We now collect a few facts that will be useful for proving statements about objects and morphisms computed by limit and colimit computations.  

The following lemma, which was already used in the proof of Proposition~\ref{prop:sets} above shows that, if the aim is to produce a specific object, intermediate steps in a limit or colimit computation are unnecessary. The key here is the constructivity assumption. 

\begin{lemma}
Assume $\cC$ has finite 
limits (respectively, colimits).
An object produced in a limit computation (respectively, 
colimit computation) is a limit (respectively, colimit) of a diagram consisting only of basic morphisms. 
  \label{lem:constructivecomputation}
\end{lemma}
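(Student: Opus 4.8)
The plan is to induct on the number of steps $s$ in the limit computation $(D_0,\dots,D_s)$, showing that every object appearing in $D_s$ is isomorphic to a limit of a subdiagram of $D_0$ (together with, possibly, finitely many added basic identity-type morphisms — but since $D_0$ already contains all the basic morphisms in question, a subdiagram of $D_0$ suffices). The colimit case is dual, so I would only write out the limit case. For the base case $s=0$ the object is either already in $D_0$, hence trivially a limit of the one-vertex subdiagram consisting of it, or it is a product of objects of $D_0$, which exists by the finite-products hypothesis and is a limit of a discrete subdiagram of $D_0$.

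For the inductive step, suppose $v_0$ is the vertex added at step $i$, so $D_i(v_0) = \lim D_{i-1}|_{J_i}$ for some subdiagram $J_i$ of $D_{i-1}$. By the inductive hypothesis, every object $D_{i-1}(v)$ for $v\in J_i$ is isomorphic to the limit of some subdiagram $D_0|_{K_v}$ of $D_0$. I would then assemble a single subdiagram $K$ of $D_0$ as follows: take the union of the $K_v$ over $v\in J_i$, and for each edge $e$ of $J_i$, the morphism $D_{i-1}(e)\colon D_{i-1}(s(e))\to D_{i-1}(t(e))$ corresponds under the isomorphisms to a morphism between $\lim D_0|_{K_{s(e)}}$ and $\lim D_0|_{K_{t(e)}}$; by the second fact in Remark~\ref{rem_compositionsanduniversal}, such a morphism is itself obtainable as (the cone map of) one further limit, but more to the point, the key structural observation is that a limit of limits is a limit: $\lim D_{i-1}|_{J_i}$, being the limit of a diagram whose vertices are themselves limits of subdiagrams of $D_0$ and whose edges are the induced universal maps, is isomorphic to the limit of the "glued" diagram $D_0|_K$, where $K$ is the full subgraph of $I_0$ on $\bigcup_{v\in J_i} K_v$. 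This is where the constructivity condition \eqref{itemlabel:def:categorical-computation:3} is essential: it guarantees that whenever a previously-constructed limit $L_v = D_{i-1}(v)$ is reused inside $J_i$, the entire subdiagram $J_v$ that produced $L_v$ is present inside $J_i$, so the maps $D_{i-1}(e)$ out of and into $L_v$ are exactly the universal/cone maps associated to $J_v$, and hence the gluing is canonical — without constructivity, $D_{i-1}(e)$ could be an arbitrary morphism with no description in terms of $D_0$.

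The main obstacle, and the step that needs the most care, is making precise the claim that "a limit of a diagram of limits of subdiagrams of $D_0$, with the universal maps as edges, is the limit of the union subdiagram." I would prove this by checking the universal property directly: a cone over $D_0|_K$ with vertex $W$ restricts, for each $v$, to a cone over $D_0|_{K_v}$, hence factors uniquely through $L_v = \lim D_0|_{K_v}$; the compatibility of these factorizations along the edges of $J_i$ — which holds precisely because those edges are the universal maps forced by constructivity — says exactly that they assemble into a cone over $D_{i-1}|_{J_i}$, which then factors uniquely through $\lim D_{i-1}|_{J_i}$; conversely a cone over $\lim D_{i-1}|_{J_i}$ unwinds to a cone over $D_0|_K$. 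Chasing these universal properties carefully (and handling the bookkeeping of which vertices of $D_{i-1}$ lie in $J_i$ versus which were basic to begin with) is the technical heart; everything else is routine induction and dualization.
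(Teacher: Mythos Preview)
Your proposal is correct and rests on the same key observation as the paper: constructivity guarantees that whenever an intermediate limit $L_j$ appears in the subdiagram $J_i$, the entire diagram $J_j$ that produced it is already inside $J_i$, so the maps involving $L_j$ are precisely the universal cone maps and can be recovered from the basic part.

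The paper's execution, however, is considerably more direct than your inductive reconstruction. Rather than building $K$ as a union $\bigcup_{v\in J_i} K_v$ via induction on $s$, the paper simply sets $J_i' = I_0 \cap J_i$ and argues in one stroke that cones over $D_{i-1}|_{J_i'}$ extend uniquely to cones over $D_{i-1}|_{J_i}$: given a cone $Z \to D_{i-1}|_{J_i'}$, one fills in the maps $Z \to L_j$ for the non-basic vertices of $J_i$ (in order of creation) using the universal property of each $L_j$, which applies because constructivity ensures the full cone of $L_j$ lies in $J_i$. This bypasses your ``limit of limits'' lemma entirely; that lemma is true, but the intersection $J_i' = I_0\cap J_i$ is automatically the right subdiagram of $D_0$, and its limit is seen to coincide with $L_i$ by a direct comparison of universal properties rather than by assembling pieces. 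Your induction is not wrong---it recovers exactly the same $J_i'$ in the end---but it adds bookkeeping that the paper avoids. (Also, your base case remark about products is unnecessary: at $s=0$ no limits have been taken, so every object is already a vertex of $D_0$.)
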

  \begin{proof}
   Let $(D_0,\dots,D_s)$ be a limit computation and let $X$ be an
   object appearing in $D_s$. The point of the statement is that constructivity ensures that the information that would be added in intermediate limits is also included in the final limit that would produce $X$.  

  More precisely, let $L_i = \lim D_{i-1}|_{J_i}$ be the limit added to the diagram at the $i$th step. Let $J_{i}' = I_0 \cap J_{i}$. So we have that $D_{i-1}|_{J'_{i}}$ is the portion of the subdiagram of $D_{i-1}|_{J_i}$ which is also in $D_0$. We claim that $L_i \iso \lim D_{i-1}|_{J_i'}$. Indeed, the universal property of limits and constructivity imply that cones from any object $Z$ to $D_{i-1}|_{J_i'}$ can be uniquely extended to cones from $Z$ to $D_{i-1}|_{J_i}$, and therefore $\lim D_{i-1}|_{J_i'}$ satisfies the same universal property as $L_i$. 

The analogous proof holds for colimits.  
\end{proof}

The following remarks are very useful for working with objects and morphisms produced in limit computations. 

\begin{remark}
\label{rem:constructive-equalizer} If the category $\cC$ has finite products and equalizers, then we can write any limit $L$ as an equalizer, see e.g. \cite[5.4]{Awo}. 
More precisely, suppose that $J= (V,E,s,t)$ is a finite graph,
and
$D : J \to U(\cC)$ a diagram. 
Consider the following diagram:
\begin{equation}
\label{eqn:rem:contstructive-equalizer}
\xymatrix{
\prod_{v\in V} D(v)  \ar[rr]<-.8ex>_{\phi}  \ar[rr]<.8ex>^{\psi}&&\prod_{e\in E} D(t(e)) 
},
\end{equation}
where $\phi$ is the canonically defined morphism from $\prod_{v\in V} D(v)$ to the product
$\prod_{e\in E} D(t(e))$ induced from the morphisms 
\[
\left(\op{pr}_{t(e)}: \prod_{v\in V} D(v) \rightarrow t(e)\right)_{e \in E},
\]
and 
$\psi$ is the canonically defined morphism from $\prod_{v\in V} D(v)$ to the product
$\prod_{e\in E} D(t(e))$ induced from the morphisms 
\[
\left(D(e) \compose \op{pr}_{s(e)}: \prod_{v\in V} D(v) \rightarrow t(e)\right)_{e \in E},
\]
both maps  defined by the universal property of products.
\hide{
with $\phi_{D(t(e))} = \pi_{D(t(e))}$ 
and $\psi_{D(t(e))} = D(e)\compose
\pi_{D(s(e))}$, 
where for $v \in V$, \
\[
\pi_{D(v)}: \prod_{v\in V} D(v) \rightarrow D(v)
\] 
is the canonical projection.
}

Let $L \xtonormal{f} \prod_{v\in V} D(v)$ in the following diagram be the equalizer of the diagram \eqref{eqn:rem:contstructive-equalizer}:
 
 \[
\xymatrix{
L \ar[r]^{\hspace{-0.3in}f}& \prod_{v\in V} D(v)  \ar[rr]<-.8ex>_{\phi}  \ar[rr]<.8ex>^{\psi}&&\prod_{e\in E} D(t(e)) 
}.
\]

Then the object $L$ along with the morphisms
\[
\left(
\op{pr}_{D(v)}\circ f: L \rightarrow D(v)
\right)_{v \in V}
\] 
is isomorphic to the limit, $\lim D$, of $D$.
\end{remark}

\begin{remark}\label{rem:constructive-morphism-limits}
If $X\xto{f} Y$ is a morphism computed by a limit computation, and neither of $X$ and $Y$ is in $D_0$, then $X$ must have been computed as a limit of a diagram that contains $Y$. By constructivity, the diagram whose limit is $X$ must contain the diagram that produced $Y$ as a subdiagram. Therefore, if $X = \lim D$ where $D : (V,E)\to U(\cC)$ is a diagram of basic morphisms, then $Y = \lim D'$, where $D'$ is the subdiagram corresponding to a full subgraph $(V', E') \subset (V,E)$. We then have a commuting diagram
\[
\xymatrix{
X \ar[rr] \ar[d]^f && \prod_{v\in V} D(v)  \ar[rr]<-.8ex>_{\phi}  \ar[rr]<.8ex>^{\psi} \ar[d]_{\pi} &&\prod_{e\in E} D(t(e))\ar[d]_{\pi'}  \\
Y \ar[rr] && \prod_{v\in V'} D(v)  \ar[rr]<-.8ex>_{\phi'}  \ar[rr]<.8ex>^{\psi'}&&\prod_{e\in E'} D(t(e))  
}.
\]
In particular, $f$ is induced by the projection $\pi$. 
\end{remark}

We have corresponding facts for colimits as well. 
\begin{remark}
\label{rem:constructive-coequalizer} If the category $\cC$ has finite coproducts  and coequalizers, then we can write any colimit $M$ as an coequalizer. 
More precisely, suppose that $J= (V,E,s,t)$ is a finite graph,
and
$D : J \to U(\cC)$ a diagram. 
Consider the following diagram:
\begin{equation}
\label{eqn:rem:contstructive-coequalizer}
\xymatrix{
\coprod_{e\in E} D(s(e)) \ar[rr]<-.8ex>_{\phi}  \ar[rr]<.8ex>^{\psi} &&\coprod_{v\in V} D(v) 
},
\end{equation}
where $\phi$ is the canonically defined morphism from $\coprod_{e \in E} D(s(e))$ to 
$\coprod_{v\in V} D(v)$ induced from the morphisms 
\[
\left(\op{j}_{s(e)}: D(s(e)) \rightarrow  \coprod_{v\in V} D(v)\right)_{e \in E},
\]
and 
$\psi$ is the canonically defined morphism from $\coprod_{e \in E} D(s(e))$ to 
$\coprod_{v\in V} D(v)$ induced from the morphisms 
\[
\left(\op{j}_{t(e)} \compose D(e): D(s(e)) \rightarrow  \coprod_{v\in V} D(v)\right)_{e \in E},
\]
both maps  defined by the universal property of coproducts.

Let $\coprod_{v\in V} D(v) \xtonormal{f} M$ in the following diagram be the coequalizer of the diagram \eqref{eqn:rem:contstructive-coequalizer}:

\[
\xymatrix{
\coprod_{e\in E} D(s(e)) \ar[rr]<-.8ex>_{\phi}  \ar[rr]<.8ex>^{\psi} &&\coprod_{v\in V} D(v) \ar[rr]^{f} && M .
}
\]

Then the object $M$ with the morphisms
\[
(f \compose \op{j}_{D(v)}: D(v) \rightarrow M)_{v \in V}
\]
 is isomorphic to the to colimit, $\colim D$, of $D$.
\end{remark}

\begin{remark}\label{rem:constructive-morphism-colimits}
If $Y\xto{f} X$ is a morphism computed by a colimit computation, and neither of $X$ and $Y$ is in $D_0$, then $X$ must have been computed as a colimit of a diagram that contains $Y$. By constructivity, the diagram whose colimit is $X$ must contain the diagram that produced $Y$ as a subdiagram. Therefore, if $X = \colim D$ where $D : (V,E)\to U(\cC)$ is a diagram of basic morphisms, then $Y = \colim D'$, where $D'$ is the subdiagram corresponding to a full subgraph $(V', E') \subset (V,E)$. We then have a commuting diagram
\[
\xymatrix{
\coprod_{e\in E} D(s(e)) \ar[rr]<-.8ex>_{\phi}  \ar[rr]<.8ex>^{\psi} &&\coprod_{v\in V} D(v)  \ar[rr] && X  \\
\coprod_{e\in E'} D(s(e)) \ar[rr]<-.8ex>_{\phi}  \ar[rr]<.8ex>^{\psi}  \ar[u]^{\op{j}'}&&\coprod_{v\in V'} D(v) \ar[u]^{\op{j}} \ar[rr] 
 && Y\ar[u]^{f}
},
\]
where $\op{j},\op{j}'$ the morphisms induced by the universal property of coproducts.
In particular, $f$ is induced by the morphism $\op{j}$. 
\end{remark}

\section{Diagram computations, circuits and algebraic varieties}
\label{sec:circuits}
We now make a comparison between the arithmetic circuit complexity of polynomials and the limit complexity of the varieties which are their zero-sets. For simplicity, we will start with the category of affine algebraic varieties, but what we describe will make sense in other settings like affine schemes and algebras.

\subsection{Limit computations in affine varieties, schemes and $k$-algebras}
Let $\cC$ be the category $\AffVar_k$ of affine algebraic varieties over a field $k$. 
Let $\cA$ consist of the following basic morphisms. 
\begin{align}
  & \AA^{1} \xtonormal{c} \AA^{1}\t{, for each }c\in k ,\nonumber \\
  & \AA^{1}\times \AA^{1} \xtonormal{+} \AA^{1},\nonumber\\
  & \AA^{1} \times \AA^{1} \xtonormal{\times} \AA^{1},\label{eqn_affvar_basics}  \\
  & \AA^{1} \times \AA^{1} \xtonormal{\pi_1,\pi_2} \AA^{1},\nonumber\\
  & \AA^{1} \to \AA^{0}\t{, and }\AA^{0}\xtonormal{c} \AA^1\t{, for each } c \in k. \nonumber
\end{align}
Each of these morphisms is considered to have unit cost. 

\begin{example}  
As an example, let us make a limit computation of the morphism $\AA^{3} \xto{x^2+yz} \AA^{1}$ using these basic morphisms. 
\vspace{0.1in}

\vspace{0.1in}
\begin{minipage}[c]{0.5\textwidth}
  \hspace{0.3in}\includegraphics[width=2.0in]{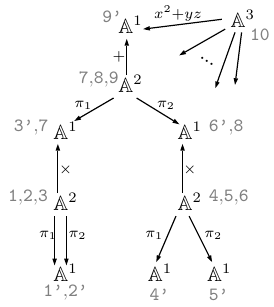}
\end{minipage}
\begin{minipage}[c]{0.5\textwidth}
\begin{tabularx}{0.5\textwidth}{ll}
  \code{1.} & \code{\_,}$\AA^{2} \xtonormal{\pi_{1}}\AA^{1}$\code{,1'} \\
  \code{2.} & \code{1,}$\AA^{2} \xtonormal{\pi_{2}}\AA^{1}$\code{,1'} \\
  \code{3.} & \code{1,}$\AA^{2} \xtonormal{\times}\AA^{1}$\code{,\_} \\
  \code{4.} & \code{\_,}$\AA^{2} \xtonormal{\pi_{1}}\AA^{1}$\code{,4'} \\
  \code{5.} & \code{4,}$\AA^{2} \xtonormal{\pi_{2}}\AA^{1}$\code{,\_} \\
  \code{6.} & \code{4,}$\AA^{2} \xtonormal{\times}\AA^{1}$\code{,\_} \\
  \code{7.} & \code{\_,}$\AA^{2} \xtonormal{\pi_{1}}\AA^{1}$\code{,3'} \\
  \code{8.} & \code{7,}$\AA^{2} \xtonormal{\pi_{2}}\AA^{1}$\code{,6'} \\
  \code{9.} & \code{7,}$\AA^{2} \xtonormal{+}\AA^{1}$\code{,\_} \\
  \code{10.} & \code{lim(1,2,3,4,5,6,7,8,9)}\\ 
\end{tabularx}
\end{minipage}
\vspace{0.1in}

The following shows that a similar computation can be done to compute any polynomial map. 

\end{example}

\begin{theorem}\label{thm_circuittodiagram}
  Let $f\in k\left[ x_1,\dots,x_n \right]$ be a polynomial of degree $d$. Assume that $f$ is computed by a straight line program (cf. \cite[Definition 2.1]{Burgisser-book2}) $\Gamma$
  of length $N$. Then, the limit-complexity (and therefore the categorical complexity) of the zero-set $X\subset \AA^{n}$ of $f$ is in $\bigO(N)$.
\end{theorem}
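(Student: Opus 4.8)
The plan is to translate the straight-line program $\Gamma$ computing $f$ directly into a limit computation in $\AffVar_k$, mimicking the worked example of $\AA^3 \xto{x^2+yz}\AA^1$ above. First I would fix the straight-line program $\Gamma = (g_1,\dots,g_N)$, where each intermediate result $g_\ell$ is either an input variable $x_i$, a constant $c\in k$, or $g_j \star g_k$ for $j,k<\ell$ and $\star\in\{+,\times\}$ (possibly also including scalar multiplication, which is covered by the constant morphisms $\AA^1\xto{c}\AA^1$ composed appropriately, or folded into the $+,\times$ gates). The key point, recorded in Remark~\ref{rem_compositionsanduniversal}, is that composition of morphisms is obtained by a single limit, and more generally that assembling a cone over a diagram of already-computed morphisms into a product-like object costs one extra limit. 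So the strategy is: build, step by step, a single diagram $D_0$ of basic morphisms encoding all the arithmetic operations of $\Gamma$, together with the necessary projection morphisms $\AA^2\xto{\pi_1,\pi_2}\AA^1$ to route each intermediate value to the inputs of the next gate; then take one final limit.

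Concretely, I would proceed as follows. (1) For each gate $g_\ell = g_j \star g_k$ of $\Gamma$, add to the diagram a copy of the basic morphism $\AA^1\times\AA^1\xto{\star}\AA^1$ together with the two projections $\AA^1\times\AA^1\xto{\pi_1,\pi_2}\AA^1$ whose targets are identified with the vertices representing $g_j$ and $g_k$; for a constant gate add $\AA^0\xto{c}\AA^1$; for an input variable $x_i$ we simply keep track of the corresponding $\AA^1$-vertex. Each gate contributes $O(1)$ basic morphisms, so the initial diagram $D_0$ has $O(N)$ edges and hence cost $O(N)$. (2) Take the limit $L$ of this whole diagram $D_0$. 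By Remark~\ref{rem:constructive-equalizer}, $L$ is the equalizer inside $\prod_v D(v) = \AA^{(\text{number of vertices})}$ of the two maps forced by the edges; unwinding the definitions, the limit cone exhibits $L$ together with its projection to the $n$ "input" copies of $\AA^1$ as exactly the graph of the polynomial map $(x_1,\dots,x_n)\mapsto f(x_1,\dots,x_n)$ — i.e. $L \cong \AA^n$ and the induced morphism $L\to \AA^1$ (the "output" vertex) is $f$ itself. This uses the standard fact that a limit over a diagram whose edges are all the arithmetic operations of a straight-line program cuts out precisely the tuples of consistent partial evaluations, which is determined by the values of the inputs. (3) Finally, to obtain the zero-set $X = f^{-1}(0) \subset \AA^n$, I would add one basic constant morphism $\AA^0\xto{0}\AA^1$ and one morphism $\AA^n\to\AA^0$, forming the pullback (a limit of a three-term diagram) of $\AA^n \xto{f}\AA^1 \xfrom{0}\AA^0$; this pullback is $X$, and the whole thing is still one limit computation of cost $O(N)$. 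Since limit complexity bounds mixed (hence "categorical") complexity, the theorem follows.

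I would spell out step (2) carefully as it is the conceptual heart of the argument: the claim that the limit of the "evaluation diagram" of a straight-line program is $\AA^n$ with the correct map to $\AA^1$. The main obstacle — or rather the main thing needing care — is bookkeeping: making sure that (a) the constructivity condition of Definition~\ref{def_categorical-computation} is respected (it is trivially satisfied here since we take only one limit at the end, with no intermediate limits to be "forgotten"), (b) each intermediate value of $\Gamma$ that is reused as input to several later gates is wired in with the appropriate number of projection morphisms without blowing up the count beyond $O(N)$, and (c) the product $\prod_v D(v)$ appearing implicitly in the limit is itself constructed from the basic product morphisms $\pi_1,\pi_2$ at cost linear in the number of vertices, which is $O(N)$. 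None of these presents a genuine difficulty; the only real content is the identification in step (2), which is a direct computation with the universal property of limits (equivalently, with the equalizer description of Remark~\ref{rem:constructive-equalizer}). The same argument, reading "variety" as "scheme" or replacing $\AffVar_k$ by $\opcat{Alg}_k$ and reversing arrows, gives the analogous statements for affine schemes and $k$-algebras mentioned in the Introduction.
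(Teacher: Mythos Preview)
Your proposal is correct, and in fact it is the streamlined variant that the paper itself points to at the end of its proof (``the intermediate limit steps can be removed; cf.~Lemma~\ref{lem:constructivecomputation}''). The paper's own argument is organized differently: it proceeds by induction on the length of the prefix $(\Gamma_1,\dots,\Gamma_k)$, maintaining at each stage an explicit morphism $\AA^n \xto{(f_1,\dots,f_k)} \AA^k$ together with all projections, and spends $O(1)$ limits per gate to extend from $k$ to $k+1$. This makes the correctness of the output map manifest at every step but uses many intermediate limits. Your approach instead wires the entire straight-line program into $D_0$ at once and invokes a single limit, relying on the equalizer description (Remark~\ref{rem:constructive-equalizer}) to identify the limit with $\AA^n$ carrying the map $f$. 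Both routes give $O(N)$; yours is leaner, while the paper's inductive version trades efficiency for a self-evident invariant.

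One small point to tidy in your step~(3): after taking the limit $L$ in step~(2), you do take a second limit (the pullback), so the constructivity condition is \emph{not} vacuous as you claim. It is still satisfied, but only because the pullback's subdiagram can be taken to contain all of $D_0$ (or, more simply, you can attach $\AA^0\xto{0}\AA^1$ to the output vertex of $D_0$ from the start and take a single limit to land directly on $X$). Also, the morphism $\AA^n\to\AA^0$ you mention is neither basic nor needed for the pullback $\AA^n\xto{f}\AA^1\xleftarrow{0}\AA^0$; you can drop it.
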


\begin{proof}
  Using the operations in the straight-line program $\Gamma$, we will construct a morphism $\AA^{n} \xto{f} \AA^{1}$. $X$ is then the limit $\AA^{n} \xto{f} \AA^{1} \xleftarrow{0} \AA^{0} $. 
  
  Associated to each straight-line program of length $s$, there is a morphism $(f_1,\dots,f_s) : \AA^{n} \to \AA^{s}$ where $f_1,\dots,f_s$ are the polynomials computed in each line of the diagram. Let $\Gamma = (\Gamma_1,\dots,\Gamma_N)$, where $\Gamma_i$ is the $i$th instruction in the straight line program $\Gamma$. For the first $n$ instructions of $\Gamma$, which introduce the variables $x_1,\dots,x_n$ as polynomials, we have the map $\AA^{n} \xto{\op{id}} \AA^{n}$. The space $\AA^{n}$ is constructed in a limit computation by taking the limit of $n$ disjoint copies of the diagram consisting of $\AA^{1}$ mapping to itself by $1$. To get the map $\AA^{n} \xto{\op{id}} \AA^{n}$, we take the limit of the whole diagram obtained so far. Costing $n+2$. Note that our diagram also contains all the projections from the source and target of $\op{id_{\AA^{n}}}$ to the $n$ components. This is the base case of the following inductive construction. 
  
  Assume that, for $n\leq k \leq N-1$, we have produced, in a limit computation, a morphism $(f_1,\dots,f_k) : \AA^{n} \to \AA^{k}$ corresponding to the polynomials computed by each line of $(\Gamma_1,\dots,\Gamma_k)$, together with the projections $p_1,\dots,p_k$ from $\AA^{k}$ to $k$ copies of $\AA^{1}$, all of which are in $D_0$. 
  
  Assume that $\Gamma_{k+1}$ is the multiplication of the $i$th and $j$th lines of the program, i.e. $f_{k+1} = f_i f_j$. We can then perform the following steps of the computation. For convenience, we have added, as if they were basic morphisms, the portion of the diagram used for the next steps as the first $k+1$ steps in this description: 

\begin{center}
  \includegraphics[width=3.3in]{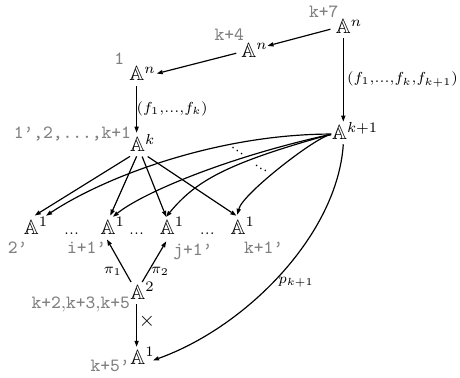}
\end{center}

\begin{tabularx}{0.5\textwidth}{ll}
  \code{1.} & \code{\_,}$\AA^{n} \xtonormal{(f_1,\dots,f_k)} \AA^{k}$ \code{,1'} \\
  \code{2.} & \code{1',}$\AA^{k}\xtonormal{p_1}\AA^{1}$\code{,2'} \\
  \code{3.} & \code{1',}$\AA^{k}\xtonormal{p_2}\AA^{1}$\code{,3'} \\
   & \dots \\
  \code{k+1.} & \code{1',}$\AA^{k}\xtonormal{p_k}\AA^{1}$\code{,(k+1)'} \\
  \code{k+2.} & \code{\_,}$\AA^{2}\xtonormal{\pi_1} \AA^{1}$\code{,(i+1)'} \\
  \code{k+3.} & \code{k+2,}$\AA^{2}\xtonormal{\pi_2} \AA^{1}$\code{,(j+1)'} \\
  \code{k+4.} & \code{lim((k+2),(i+1)',(j+1)', 1',1)} \\
  \code{k+5.} & \code{k+2,}$\AA^{2} \xtonormal{\times}\AA^{1}$\code{,\_} \\
  \code{k+6.} & \code{lim(2',3',\dots,(k+1)',(k+4)')}  \\
  \code{k+7.} & \code{lim((k+4), 1,1',2',3',\dots, (k+1)', k+2, (k+5)',k+6)}
\end{tabularx}

In the end, the map \code{(k+7)->(k+6)} is the map $\AA^{n} \xto{(f_1,\dots,f_{k+1})} \AA^{k+1}$, and the projection maps for the next step are the maps \code{(k+6)->2'},\code{(k+6)->3'},\dots,\code{(k+6)->(k+1)'}, \code{(k+6)->(k+5)'}. The process for addition steps is similar, with the step \code{k+5} modified. For scalar multiplication, it is similar with two steps less. For constants appearing in the computation, we add a new variable and take fiber product with $\AA^{0} \xto{c}\AA^{1}$ to fix the variable to the value $c$. Repeating this process until, k=N, we see that $(f_1,\dots,f_N)$ is produced in $\bigO(N)$ steps. We can then compose with the projection to the last coordinate, by taking a limit, to produce $\AA^{n}\xto{f} \AA^{1}$. To obtain the zero-set of $f$, we add $\AA^{0}\xto{0} \AA^{1}$ to this last $\AA^{1}$ and take the limit. 

It should also be noted that the way we made a limit computation for $\AA^{n}\xto{f}\AA^{1}$ was not the most efficient, but this way gives the cleanest inductive argument. For efficiency, the intermediate limit steps can be removed; cf. Lemma~\ref{lem:constructivecomputation}.
\end{proof}

\begin{remark}\label{rem_fcomp}
  The proof above shows that we can, by starting from a number of $\AA^{1}$ added as basic objects, construct any polynomial functions $f_1:\AA^{n}\to \AA^{1}$. We will use this fact later. 
\end{remark}

We now consider a converse for Theorem~\ref{thm_circuittodiagram} and show that, given a limit computation with low cost, an object $X$ computed by it is \emph{isomorphic to} the zero-set of a polynomial whose arithmetic circuit complexity is low. Since diagram computations produce objects up to isomorphism, categorical complexity does not reflect the complexity of every polynomial that might be used to cut out $X$ in a larger space. For example, $X$ could be the graph of a polynomial map $f : \AA^{n} \to \AA^{1}$ with very high arithmetic circuit complexity, but since $X$ would be isomorphic to $\AA^{n}$, its limit complexity would be very small, which does not say anything about the arithmetic circuit complexity of $f$. This is discussed in more detail in Section~\ref{sec:comparison_affvar}.

We first recall the definition of an arithmetic circuit over a field $k$, and that of the arithmetic circuit complexity of a polynomial map.
\begin{definition}[Arithmetic circuit complexity]
\label{def:arithmetic-circuit}
An arithmetic circuit $C$ over a field $k$ and variables $X_1,\ldots,X_n$ is a finite directed acyclic graph such that:
\begin{enumerate}[(i)]
\item
every vertex of the directed graph  with indegree $0$ is labeled either by a variable or an element of $k$;
\item
every other vertex   is labeled by either $+$ or $\times$.
\end{enumerate}
The \emph{size}  of an arithmetic circuit is the number of vertices  in the associated directed graph.
Each vertex  of $C$ is associated to a polynomial in $k[X_1,\ldots,X_n]$ computed at that  vertex (in the obvious way).
We say that \emph{a polynomial is computed by an arithmetic circuit $C$}, it if appears as the  polynomial associated to some vertex of the circuit.
For a tuple of polynomials $f=(f_1,\ldots,f_m) \in k[X_1,\ldots,X_n]$, we say that the \emph{arithmetic circuit  complexity}  of the induced polynomial map $f:k^n \rightarrow k^m$, is the size
of the arithmetic circuit of the smallest size that computes each of the polynomials $f_1,\ldots,f_m$. 
\end{definition}

\begin{theorem}\label{thm_diagramtocircuit}
  Let $(D_0,\dots,D_s)$ be a limit computation in in $\AffVar_k$, whose cost equals  $C=c(D_0,\dots,D_s)$. Then:
  \begin{enumerate}[(i)]
    \item
    \label{itemlabel:thm:diagramtocircuit:1} If $X$ is on object computed by $(D_0,\dots,D_s)$, then $X$ is isomorphic to the zero-set of a polynomial morphism $\AA^{m_1} \to \AA^{m_2}$ whose components are polynomials of degree at most $2$. The total arithmetic circuit complexity of the map is bounded above by $4C$. 
    \item
     \label{itemlabel:thm:diagramtocircuit:2}
     Every morphism $f:X\to Y$ in $D_s$ is a the restriction of a projection $\AA^{m_1} \to \AA^{m_1'}$ where $\AA^{m_1}$ and $\AA^{m_2}$ are the spaces from  
     Part  \eqref{itemlabel:thm:diagramtocircuit:1} where $X$ and $Y$ are embedded respectively. 
  \end{enumerate}
  
\end{theorem}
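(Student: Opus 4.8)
The plan is to build the polynomial morphism by tracking, inductively along the limit computation $(D_0,\dots,D_s)$, an embedding of the "current" diagram into an ambient affine space together with a system of quadratic equations cutting it out. Concretely, I would maintain the following invariant after processing $D_i$: there is an affine space $\AA^{N_i}$ (with $N_i$ controlled linearly in the cost used so far), a polynomial map $g_i:\AA^{N_i}\to\AA^{M_i}$ whose components have degree $\le 2$, such that every object $V$ appearing in $D_i$ is isomorphic to the zero-set of (a subtuple of) $g_i$ inside a coordinate subspace of $\AA^{N_i}$, and every morphism in $D_i$ is realized as the restriction of a coordinate projection $\AA^{N_i}\to\AA^{N_i'}$. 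The base case handles $D_0$: each basic morphism in \eqref{eqn_affvar_basics} is among $\AA^1\xrightarrow{c}\AA^1$, $\AA^2\xrightarrow{+}\AA^1$, $\AA^2\xrightarrow{\times}\AA^1$, $\AA^2\xrightarrow{\pi_j}\AA^1$, $\AA^1\to\AA^0$, $\AA^0\xrightarrow{c}\AA^1$; each of these lives in a product of copies of $\AA^1$ and its graph is cut out by one equation of degree $\le 2$ (e.g. $y-x_1x_2=0$ for multiplication, $y-c=0$ for a constant), and it is itself a coordinate projection. So $D_0$ contributes $\le 4$ circuit gates per basic morphism and $\AA^{N_0}$ is a product of $\le 2\cdot(\text{edges of }I_0)$ lines.

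For the inductive step I use Remark~\ref{rem:constructive-equalizer}: adding $\lim D_{i-1}|_{J_i}$ of a finite subdiagram $J_i=(V',E')$ amounts to forming the equalizer of the pair $\phi,\psi:\prod_{v\in V'}D_{i-1}(v)\rightrightarrows\prod_{e\in E'}D_{i-1}(t(e))$. Under the inductive embedding, $\prod_{v\in V'}D_{i-1}(v)$ is already a coordinate subspace of $\AA^{N_{i-1}}$ cut out by the accumulated quadratic equations; I introduce coordinates for a new copy of this product (the limit object $L_i$), and for each edge $e\in E'$ I impose the equation saying "the $L_i$-coordinates lying over $t(e)$, pushed through $D_{i-1}(e)$, equal the $L_i$-coordinates sitting at $t(e)$." Since $D_{i-1}(e)$ is itself (by the invariant applied to morphisms) a coordinate projection, each such equation is \emph{linear}; the only quadratic equations ever added are those already present from $D_0$-level basic morphisms. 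The new cone morphisms $L_i\to D_{i-1}(v)$ are, by construction, coordinate projections, so the morphism part of the invariant is preserved. Crucially, the constructivity condition \eqref{itemlabel:def:categorical-computation:3} guarantees that whenever a previously-built limit $L_j$ is reused inside $J_i$, the whole subdiagram $J_j$ that produced it is also inside $J_i$, so the coordinates and equations describing $L_j$ are genuinely available to express the new equations — without constructivity the bookkeeping would break and sizes could blow up (cf. Example~\ref{ex_nonconstructiveexample}). I then record the cost accounting: each of the $s$ steps adds a bounded number of new coordinates and equations, and each equation needs $O(1)$ circuit gates beyond what is already computed, so the total circuit size of the final map $g_s$ is $O(C)$; a careful count of additions/multiplications per step, summed with the $4$-per-basic-morphism bound from $D_0$, yields the stated bound $4C$. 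For Part~\eqref{itemlabel:thm:diagramtocircuit:2}, the morphism invariant is exactly the claim: any $f:X\to Y$ in $D_s$ is the restriction to $X$ of a coordinate projection between the ambient spaces $\AA^{m_1}\to\AA^{m_1'}$ in which $X,Y$ were embedded.

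The main obstacle I anticipate is the precise constant $4C$ and, relatedly, making the "degree $\le 2$" claim airtight: I must ensure that composing the inductive embeddings never forces a genuine substitution of a quadratic into a quadratic (which would raise the degree). This is exactly why the construction keeps \emph{all} intermediate coordinates as free variables rather than eliminating them — every relation stays either linear (the equalizer/cone relations) or one of the original quadratics $y=x_1x_2$, $y=x_1+x_2$, $y=c$ — but verifying this carefully through the limit-as-equalizer rewriting, and checking that Remark~\ref{rem:constructive-morphism-limits} genuinely makes every structure morphism a projection (not merely induced by one), is the delicate part. A secondary subtlety is that $X$ may appear in $D_s$ only up to isomorphism as a non-full subdiagram, so I should phrase the conclusion as "$X$ is isomorphic to the zero-set", which matches the statement; and I should note that $m_1$ may be taken as the number of coordinates and $m_2$ the number of equations, both $O(C)$.
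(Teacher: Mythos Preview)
Your inductive strategy is sound in spirit, but the paper's proof takes a substantially more direct route that you should be aware of, because it sidesteps the bookkeeping that causes trouble in your argument.

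The paper does \emph{not} track an evolving embedding through $D_0,\dots,D_s$. Instead it invokes Lemma~\ref{lem:constructivecomputation} once: constructivity guarantees that any object $X$ produced at step $i$ satisfies $X\cong \lim D_{i-1}|_{J_i'}$ where $J_i' = J_i\cap I_0$ is a subdiagram of $D_0$ consisting \emph{only of basic morphisms}. Then Remark~\ref{rem:constructive-equalizer} expresses this single limit as the equalizer of $\phi,\psi:\prod_{v\in V'} D(v)\rightrightarrows \prod_{e\in E'} D(t(e))$, and since every $D(v)$ is $\AA^0,\AA^1$ or $\AA^2$ and every $D(e)$ is one of the basic maps, the equalizer is immediately the zero locus of $\phi-\psi:\AA^{m_1}\to\AA^{m_2}$ with each component of degree $\le 2$ and $m_1,m_2\le 2C$. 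Part~\eqref{itemlabel:thm:diagramtocircuit:2} then follows from Remark~\ref{rem:constructive-morphism-limits}: the diagram producing $Y$ is a full subdiagram of the one producing $X$, so $f$ is induced by the corresponding projection of products. No induction on $s$ is needed at all.

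Your approach, by contrast, maintains a global ambient space $\AA^{N_i}$ housing all of $D_i$, and here is where the gap lies. Your claim that ``each of the $s$ steps adds a bounded number of new coordinates and equations'' is not correct as stated: at step $i$ you introduce coordinates for a copy of $\prod_{v\in J_i} D_{i-1}(v)$, and the number of factors in $J_i$ is bounded only by $|J_i|$, not by an absolute constant. If you try to control this by invoking constructivity (so that coordinates for $L_j\in J_i$ are ``already available''), you are essentially rediscovering Lemma~\ref{lem:constructivecomputation} in a more tangled form, and you would still need to argue carefully why the total $N_s$ is $O(C)$ rather than $O(C^2)$. The paper's one-shot reduction to $D_0$ makes both the degree bound and the $4C$ circuit bound fall out immediately, whereas your inductive framework obscures them. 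Your instinct about where the difficulty lies (the constant and the degree-$2$ claim) is right, but the fix is not more careful induction --- it is to skip the induction entirely.
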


\begin{proof} 
If $s=0$, then the statement is true since the basic morphisms are  
  $\AA^{1} \xto{c} \AA^{1}$, 
  $\AA^{1}\times \AA^{1} \xto{+,\times} \AA^{1}$,
  $\AA^{1}\times \AA^{1} \xto{\pi_1, \pi_2} \AA^{1}$, 
  $\AA^{1} \to \AA^{0}$, and $\AA^{0}\xto{c} \AA^1$.  

  For the general case, constructivity implies that, as far as the isomorphism
  class of an object $X$ in $D_s$ is concerned, the intermediate limits in a
  limit computation can be removed (Lemma~\ref{lem:constructivecomputation}). Using the notation of the proof of Lemma~\ref{lem:constructivecomputation}, if $X = L_i$ is produced as a limit in the diagram computation, then $X \iso \lim D_{i-1}|_{J_i'}$ where $D_{i-1}|_{J'}$ is a subdiagram of $D_0$.  

  As discussed in Remark~\ref{rem:constructive-equalizer}, we can write $X = L_i$ as an equalizer. Let $J_i'= (V,E)$, $L_i \iso\lim D_{i-1}|_{J_i'}$; writing $D = D_{i-1}|_{J_i'}$, $L_i$ is isomorphic to the limit of 
\[
\xymatrix{
\prod_{v\in V} D(v)  \ar[rr]<-.8ex>_{\phi}  \ar[rr]<.8ex>^{\psi}&&\prod_{e\in E} D(t(e)).  
}
\]
where $\phi_{D(t(e))} = \pi_{D(t(e))}$ and $\psi_{D(t(e))} = D(e)\compose
\pi_{D(s(e))}$. 

 Since $J_i'\subset I_{0}$, we have that each $D(v)$ or $D(t(e))$ is either $\AA^{1}$, $\AA^{2}$, or a point; and $D(e)$ are addition, multiplication, constant, projection, or multiplication by a constant. So the above equalizer is of the form  
\[
\xymatrix{
\AA^{m_1} \ar[rr]<-.8ex>_{\phi}  \ar[rr]<.8ex>^{\psi}&&\AA^{m_2}
},
\]
and $L_i$ is isomorphic to the zero locus of $\AA^{m_1} \xto{\phi-\psi} \AA^{m_2} $. For the complexity of $\phi-\psi$, observe that $m_1,m_2\leq 2C$, and that each component of $\phi-\psi$ is a very simple polynomial which can be produced in two steps.

We now consider the second assertion. This follows directly from the discussion in Remark~\ref{rem:constructive-morphism-limits}. We observe that any morphism $f:X\to Y$ appearing in $D_s$ but not in $D_0$ must be a cone map from a limit $X = L_i = \operatorname{lim} D_{i-1}|_{J_i}$ to an object $Y$ appearing in $D_{i-1}|_{J_i}$. By the above construction, we know that $X$ and $Y$ are zero-loci of morphisms whose sources are products of objects in subdiagrams of $D_0$. Moreover constructivity implies that the subdiagram for $Y$ is contained in the subdiagram for $X$. The morphism $f$ is then the restriction of the projection from the subdiagram for $X$ to the subdiagram for $Y$. 
 
\end{proof}

The above arguments can also be considered for the category of affine schemes instead of varieties. Let $k$ be a field and let $\opcat{AffSch_{k}}$ be the category of affine schemes. Consider the same set $\cA$ of morphisms as in the case of affine varieties: 
$\AA^{1} \xto{c} \AA^{1}\t{, for each }c\in k$,
$\AA^{1}\times \AA^{1} \xto{+} \AA^{1}$, 
$\AA^{1} \times \AA^{1} \xto{\times} \AA^{1}$,
$\AA^{1} \times \AA^{1} \xto{\pi_1,\pi_2} \AA^{1}$,
$\AA^{1} \to \AA^{0}\t{, and }\AA^{0}\xto{c} \AA^1$.

One can also consider the category $\opcat{Alg}_{k}$ of $k$-algebras with basic morphisms
\begin{align*}
  & k\left[ x \right] \xtonormal{c} k\left[ x \right]\t{, for each }c\in k, \\
  & k\left[ x \right] \xtonormal{x\otimes 1 + 1 \otimes x} k\left[ x \right]\otimes k\left[ x \right], \\
  & k\left[ x \right] \xtonormal{x\otimes x} k\left[ x \right]\otimes k\left[ x \right], \\
  & k\left[ x \right] \xtonormal{id\otimes 1,\, 1 \otimes id} k\left[ x \right]\otimes k\left[ x \right],  \\
  & k\xtonormal{1} k\left[ x \right] \t{, and }\\
  & k\left[ x \right] \xto{x\mapsto c} k\t{, for each }c\in k. 
\end{align*}
These correspond to the basic morphisms considered for $\opcat{AffVar}_{k}$ under the adjoint equivalence
\[
\xymatrix{
\opcat{AffSch}_{k}   \ar@<1.0ex>[r]^{k\left[ \cdot \right]} & \opcat{Alg}_{k}^{\op{op}}  \ar@<1.0ex>[l]^{\,\,\,\,\op{Spec}}\\ 
}.
\]

Without any modification to the proofs, Theorems~\ref{thm_circuittodiagram} and~\ref{thm_diagramtocircuit} hold when $\opcat{AffVar}_{k}$ is replaced by $\opcat{AffSch}_{k}$ or $\opcat{Alg}_{k}$.

\subsection{Mixed computations in projective schemes} Let $\cC$ be the category $\opcat{Sch}_{k}$ of all schemes over a field $k$. Letting $\cA$ consist of the morphisms above as in the affine scheme case, we get a definition of complexity in the category of schemes.

The morphisms in $\cA$ are actually enough to produce all projective schemes using mixed computations. For example, in order to make $\PP^{1}$, we can produce the following diagram as a subdiagram of a computation and take its colimit:
\[
\xymatrix{
\AA^{1} && \AA^{1} \\
&Z(xy-1)\subset \AA^{2}\ar[lu]^{\pi_1} \ar[ru]_{\pi_2}&}
\] 
Note that $Z(xy-1)$ is isomorphic to $\AA^{1}-\left\{ 0 \right\}$. 

\begin{proposition}
  Let $X \subset \PP^{n}_{k}$ be the zero-scheme of homogeneous polynomials $f_1,\dots,f_m \in k\left[ x_0,\dots,x_n \right]$. Assume that, for each $i$, there is an arithmetic circuit of size $c_i$ computing $f_i$. Then, the categorical complexity of $X$ is in $\op{O}(n^2(c_1+\dots+c_s))$. 
\end{proposition}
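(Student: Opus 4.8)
The plan is to present $X$ as glued from its intersections with the $n+1$ standard affine charts of $\PP^n_k$, building each affine piece by a limit computation and then recovering $X$ as a single colimit of the resulting gluing diagram --- exactly the strategy announced in the introduction (``building affine pieces with limits and then gluing them using colimits''). Write $U_j = \{x_j\neq 0\}\cong\AA^n$ for the standard charts, with coordinates $y_k = x_k/x_j$ ($k\neq j$), and set $X_j = X\cap U_j$, $X_{ij} = X\cap U_i\cap U_j$.

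First I would build the $X_j$. Dehomogenizing $f_\ell$ in the chart $U_j$ means substituting $x_j = 1$, which does not increase arithmetic circuit size, so the system $(f_\ell^{(j)})_{1\le\ell\le m}$ is computed by a straight-line program of length at most $N := c_1+\dots+c_m$. By Remark~\ref{rem_fcomp} one constructs the morphism $\AA^n\xrightarrow{(f_1^{(j)},\dots,f_m^{(j)})}\AA^m$ in $\bigO(N)$ steps; then $X_j$ is the limit of $\AA^n\to\AA^m\xleftarrow{0}\AA^0$, where $\AA^0\xrightarrow{0}\AA^m$ is assembled from $m$ basic morphisms $\AA^0\xrightarrow{0}\AA^1$. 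This computation also leaves the ambient $\AA^n$ and its coordinate projections available inside the computed diagram. Over all $n+1$ charts this costs $\bigO(nN)$.

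Next I would build, for each pair $i<j$, the overlap $X_{ij}$ together with its two structure maps into $X_i$ and $X_j$. In the chart $U_j$, $U_i\cap U_j$ is $\{y_i\neq 0\}$, so $X_{ij}$ is the fiber product of $X_j\hookrightarrow\AA^n\xrightarrow{\pi_i}\AA^1$ with the open immersion $\AA^1\setminus\{0\}\hookrightarrow\AA^1$, the latter realized as $Z(tw-1)\to\AA^1$ (projection to the $t$-axis), where $Z(tw-1)\subset\AA^2$ is built in $\bigO(1)$ steps exactly as in the $\PP^1$ example above; thus $X_{ij}$, and the open inclusion $X_{ij}\hookrightarrow X_j$ as a limit-cone morphism, cost $\bigO(1)$ more given $X_j$. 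The gluing isomorphism onto the corresponding open of $U_i$ is the restriction of the monomial map with $U_i$-coordinates $z_k = y_k\,w$ (for $k\neq i,j$) and $z_j = w$, where $w$ is the inverse of $y_i$ furnished by the auxiliary coordinate of $Z(tw-1)$ --- so no division among the basic morphisms is needed. Having target $\AA^n$, this map is assembled from $\bigO(n)$ basic multiplications and one limit, hence $\bigO(n)$ steps; and homogeneity of $f_\ell$ gives $f_\ell^{(i)}(z) = w^{\deg f_\ell}\,f_\ell^{(j)}(y)$, which shows the map carries $X_{ij}$ into $X_i$. Over the $\binom{n+1}{2}=\bigO(n^2)$ pairs this contributes $\bigO(n^3)$, so together with the $\bigO(nN)$ above and one final colimit step the whole mixed computation has cost $\bigO(n^2N + n^3)$, which is $\bigO(n^2N)$ once $N\geq n$ (the term $\bigO(n^3)$ is the unavoidable cost of assembling the gluing data of $\PP^n$ itself).

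It then remains to identify the colimit of the gluing diagram --- the objects $X_j$ and $X_{ij}$ with the inclusions $X_{ij}\hookrightarrow X_i$ and $X_{ij}\to X_j$ --- with $X$. This is the standard fact that a scheme is the colimit, in $\opcat{Sch}_k$, of the diagram formed by the members of an open cover and their pairwise intersections together with the inclusion maps (the colimit exists in $\opcat{Sch}_k$ and equals the glued scheme, because a morphism of locally ringed spaces between schemes is a morphism of schemes); one applies it to $X$ with the cover $\{X_j\}$. I expect the main obstacle to be the explicit construction and cost analysis of the $\bigO(n^2)$ transition morphisms within the restricted basic-morphism set --- in particular simulating inversion by the $Z(tw-1)$ trick and checking that the restricted monomial maps land in the correct affine pieces --- while the per-chart construction is immediate from Theorem~\ref{thm_circuittodiagram} and Remark~\ref{rem_fcomp}, and the colimit identification is routine.
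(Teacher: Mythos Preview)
Your approach is correct and follows the same global strategy as the paper: build the affine pieces $X_j=X\cap U_j$ by limit computations via Theorem~\ref{thm_circuittodiagram}, build the pairwise overlaps with their two structure maps, and take one colimit of the resulting Zariski gluing diagram. The cost analysis $\bigO(nN+n^3)$ also matches.

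The one genuine difference is in how the overlaps are produced. The paper realizes each $Z'_{i,j}\cong X\cap U_i\cap U_j$ as the \emph{graph} of the transition map, i.e.\ as a closed subscheme of $A_i\times A_j$ cut out by the $n$ equations $y_ix_j-1=0$ and $y_ix_l-y_l=0$, and then pulls back along $X_i\times X_j\to A_i\times A_j$. The virtue of this is that both structure maps $Z'_{i,j}\to X_i$ and $Z'_{i,j}\to X_j$ are already present as limit--cone morphisms, with no further work. Your construction instead places $X_{ij}$ inside a single chart (as the fibre product of $X_j$ with $Z(tw-1)$ over one coordinate $\AA^1$) and then writes down the monomial transition map to the other chart. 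This is perfectly valid and arguably more geometric, but it leaves one small step implicit: the homogeneity identity $f_\ell^{(i)}(z)=w^{\deg f_\ell}f_\ell^{(j)}(y)$ shows that the \emph{map of schemes} $X_{ij}\to\AA^n$ factors through $X_i$, but it does not by itself put an \emph{arrow} $X_{ij}\to X_i$ into the computed diagram. You still need one more limit---e.g.\ the pullback of $X_i\hookrightarrow\AA^n$ along your map $X_{ij}\to\AA^n$, which returns an object isomorphic to $X_{ij}$ equipped with the desired cone morphism to $X_i$ (and via the factor $X_{ij}\to X_j$ already present, also to $X_j$). This is $\bigO(1)$ per pair and does not change your bound, but it is exactly the bookkeeping the paper's graph construction sidesteps.
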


\begin{proof}
  We first show how to construct $\PP^{n}$ by a mixed computation and then modify the construction to make the zero-scheme $X$ of $f_1,\dots,f_m$. 
 
  The last step in making $\PP^{n}$ will be to take the colimit of the diagram

\[\xymatrixcolsep{1pc}
\xymatrix{
A_0 = \AA^{n} && A_1 = \AA^{n} && A_2 = \AA^{n} && \dots && A_n = \AA^{n} \\
&Z_{0,1}\ar[lu] \ar[ru]  & Z_{0,2}\ar[llu] \ar[rru] & Z_{1,2}\ar[lu] \ar[ru]  &\dots & Z_{i,j} &\dots&Z_{n-1,n}\ar[ru]  &
}
\] 

There are $n+1$ copies of $\AA^{n}$ in the first row, which correspond to the standard covering  $U_0,\dots,U_n$ of $\PP^{n}$ by affine opens. In the second row, there is an object $Z_{i,j}$ for each pair $(i,j)$ with $i<j$; with each $Z_{i,j}$ corresponding to the intersections $U_i \cap U_j$ of the affine opens in the chart, each therefore isomorphic to the complement of a hyperplane in $\AA^{n}$. 

We will construct each $Z_{ij}$ as a subscheme of $\AA^{n}\times \AA^{n}=A_i \times A_j$, considered with coordinates $x_1,\dots,x_n,y_1,\dots,y_n$, defined by the equation:
\begin{equation}
  y_i x_j - 1 = 0,
  \label{eqn_zij1}
\end{equation}
and, for each $l\in \{1,\dots,$ ${i-1},{i+1}, \dots,n\}$, the equations 
\begin{equation}
  y_i x_l - y_l=0.
  \label{eqn_zij2}
\end{equation}
These equations describe the graph of the transition maps 
$$(x_1,\dots,x_n) \mapsto (
\frac{x_1}{x_j},
\frac{x_2}{x_j}, \dots, 
\frac{x_{i-1}}{x_j},
\frac{1}{x_j},
\frac{x_{i+1}}{x_j},\dots,
\frac{x_{j-1}}{x_j},
\frac{x_{j+1}}{x_j},\dots,
\frac{x_n}{x_j})$$ 
between the affine opens in the standard covering of $\PP^{n}$. Here, each affine open $U_i$, corresponding to points in homogeneous coordinates $[ \frac{a_0}{a_i}:\dots:\frac{a_{i-1}}{a_i}:1:\frac{a_{i+1}}{a_i}:\dots:\frac{a_{n}}{a_i} ]$ is parameterized by simply omitting the $i$th variable. The maps $Z_{i,j} \to A_i$ and $Z_{i,j} \to A_j$ are the restrictions of the projection maps from $A_i \times A_j$. 

To make the computation, start with $(n+1)$ sets of $n$ copies of $\AA^{1}$. Make $A_i$ as the limit of the $i$th set of $n$ $\AA^{1}$'s. Using the procedure described in the proof of Theorem~\ref{thm_circuittodiagram} (cf. Remark~\ref{rem_fcomp}), construct each $Z_{i,j}$ as the zero-set of the equations (\ref{eqn_zij1}) and (\ref{eqn_zij2}). Making the projections to the $A_i$ (cf. second part of Remark~\ref{rem_compositionsanduniversal}), we get the diagram above. At this point, the colimit of this diagram can be taken to produce $\PP^{n}$. 

To make the zero-scheme $X$ of $f_1,\dots,f_m$ in $\PP^{n}$, we continue in order to make the following diagram. 
\[\xymatrixcolsep{1pc}
\xymatrix{
X_0 && X_1  && X_2  && \dots && X_n   \\
&Z'_{0,1}\ar[lu] \ar[ru]  & Z'_{0,2}\ar[llu] \ar[rru] & Z'_{1,2}\ar[lu] \ar[ru]  &\dots & Z'_{i,j} &\dots&Z'_{n-1,n}\ar[ru]  &
}
\] 
where the $X_i$ are isomorphic to $X\cap U_{i}$ and the $Z'_{i,j}$ are isomorphic to $X \cap U_i\cap U_j$. To make the $X_i$, de-homogenize $f_1,\dots,f_m$ for each $A_{i}$ so that we get the equations for the subscheme of $A_i$ that corresponds to $X\cap U_i$. Using the procedure in Theorem~\ref{thm_circuittodiagram}, take the zero-scheme of these de-homogenized equations in each $A_{i}$. To make the $Z'_{i,j}$, make the map $X_i \times X_j \to A_i \times A_j$ and pull back $Z_{i,j}$. Finally, take the colimit of the diagram above to get a scheme isomorphic to $X$.

\end{proof}

\section{Categorical complexity of morphisms vs circuit complexity of polynomials}
\label{sec:comparisons}
While the motivation behind studying categorical complexity in various categories is a natural problem in its own right, it is still interesting to compare this new notion
with pre-existing notions of computational complexity in certain special categories. For pre-existing notion of complexity, the closest in spirit (in the categories we consider below) is that of arithmetic circuit complexity of polynomials. 
The goal of this section is to see how closely we can recover arithmetic circuit complexity (defined below in Definition \ref{def:arithmetic-circuit} ) (or equivalently straight-line program complexity).  We remark that arithmetic circuit complexity is
one of the
the most widely studied framework for non-uniform computational complexity theory (see for example \cite{Burgisser-book2}).

We discuss three categories where we can compare the arithmetic circuit complexity of a polynomial $f\in k\left[ x_1,\dots,x_n \right]$ with the categorical complexity of a morphism diagram. In the first one, we look at the morphism diagram $\op{D}_f = ({\AA^{n} \xto{f} \AA^{1}})$ in $\AffVar_k$ and see that its complexity can be very different than the arithmetic circuit complexity of $f$. The second category, the category of graded pairs of algebras, is an attempt at removing this discrepancy. The third category is the category of modules over $k\left[ x_1,\dots,x_n \right]$ where we prove concrete comparison results between the arithmetic circuit complexity of $f$ and the categorical complexity of the morphism diagram $k\left[ x_1,\dots,x_n \right] \xto{1\mapsto f} k\left[ x_1,\dots,x_n \right]$.

\subsection{Complexity of polynomial morphisms in $\AffVar_{k}$}\label{sec:comparison_affvar} We consider the category $\AffVar_{k}$ with the basic morphisms discussed above (\ref{eqn_affvar_basics}). Given a polynomial 
\[
f\in k\left[ x_1,\dots,x_n \right],
\] 
what is the categorical complexity of the diagram $\op{D}_f = ({\AA^{n} \xto{f} \AA^{1}})$ in $\AffVar_{k}$? 

First observe that since we have $\AA^0 \xtonormal{0}\AA^1$ as a basic morphism, we can use the pullback of the diagram
\[
\xymatrix{ \AA^n \ar[r]^f & \AA^1 & \AA^0 \ar[l]_{0} }
\]
to obtain the inclusion morphism $V \hookrightarrow \AA^n$ where $V = Z(f)$ is the affine subvariety of $\AA^n$  defined by $f$.
Thus, 
\[
c^{\lim}_{\AffVar_k,\mathcal{A}}(Z(f)) \leq c^{\lim}_{\AffVar_k,\mathcal{A}}(\op{D}_f) + 2,
\]
where $\mathcal{A}$ is the set of basic morphisms given in \eqref{eqn_affvar_basics}.

Categorical computations produce diagrams up to isomorphism, and categorical complexity is defined for isomorphism classes of diagrams. So, the complexity of $f$ is equal to the complexity of any other $\AA^{n} {\xto{g}} \AA^{1}$ where there is an automorphism $\phi: \AA^{n} \to \AA^{n}$ such that
\[
\xymatrix{
\AA^{n} \ar@{->}[d]_{\phi} \ar[r]^{f} & \AA^{1} \ar@{->}[d]^{\op{id}}\\
\AA^{n} \ar[r]^g & \AA^{1} 
}
\]
commutes. Therefore, the complexity of $\op{D}_f$ in $\AffVar_k$ is invariant under polynomial automorphisms of $\AA^{n}$
(and  the complexity of the variety $Z(f)$ is equal to the of $\phi(Z(f))$ for any polynomial automorphism $\phi$ of $\AA^n$).

For example, let $p\in k\left[ x_1,\dots,x_n \right]$ be any polynomial, and let $g\in k\left[ x_1,\dots,x_n,x_{n+1} \right]$, $g=x_{n+1} + p(x_{1},\dots,x_{n})$. Let $f=\pi_{n+1}$ sending 
$(x_1,\ldots,x_{n+1})$ to $x_{n+1}$, and $\phi(x_1,\dots,x_{n+1}) = (x_1,\dots,x_{n},x_{n+1} + p(x_1,\dots,x_n))$. Then, we have the following commuting diagram
\[
\xymatrix{
\AA^{n+1} \ar[r]^{\pi_{n+1}} & \AA^{1} & \,\,\,\,\,\,\,\,\,\,\,\,\,\,\,\,\,\,\,\, \,\,\,\,\,\,\,\,\,\,\\
\AA^{n+1} \ar@{->}[u]^{(x_1,\dots,x_n,x_{n+1}+p(x_1,\dots,x_n))} \ar[ur]_{x_{n+1}+p(x_1,\dots,x_n)} & \, & \,\,\,\,\,\,\,\,\,\,\,\,\,\,\,\,\,\,\,\,\,\,\,\,\,\,\,\,\,\,  
}
\]
So, the diagram $\op{D}_{x_{n+1}+p(x_1,\dots,x_n)}$ is isomorphic to $\AA^{n+1}\xto{\pi_{n+1}}\AA^{1}$. So, while the circuit complexity of $x_{n+1}+p(x_1,\dots,x_n)$ can be very high (for example, $p$ could be the permanent, or worse, a generic polynomial), the limit complexity of  $\op{D}_{x_{n+1}+p(x_1,\dots,x_n)}$ as well as that of $V(x_{n+1}+p(x_1,\dots,x_n))$ is bounded by $O(n)$.
This is because, geometrically, the zero-set of $x_{n+1} + p(x_1,\dots,x_n)$ is very simple, it is the graph of $-p$,  and is therefore isomorphic to $\AA^{n}$. 

It should also be noted that reductions in circuit complexity do not immediately lead to reductions in the complexity in $\AffVar_{k}$. 
Even though one can construct an arithmetic circuit for the polynomial $p(x_1,\dots,x_n)$ given one for 
$x_{n+1} + p(x_1,\dots,x_n)$ with a constant increase in size,  
this does not lead to an easy diagram computation of $\op{D}_{p(x_1,\dots,x_n)}$ from a computation of $\op{D}_{x_{n+1} + p(x_1,\dots,x_n)}$.
Still, one can ask whether polynomials which are believed to be hard to compute in 
the arithmetic circuit
model also have 
high categorical complexity. For example:
\begin{question}
\label{question:permanent}
  Is the limit/mixed complexity of $\AA^{n^2}\xto{\op{perm}_{n}}\AA^{1}$ polynomially bounded in $n$?
\end{question}

\subsection{Complexity of pairs of graded algebras}
It is possible that
the difference between complexity of $\op{D}_f$ in $\AffVar_k$ and the 
arithmetic circuit
complexity of $f$ is caused by the large number of automorphisms of $\AA^{n}$.
The goal of this section is to consider a category where arithmetic circuit  complexity is possibly close to categorical complexity,
and we can hope to bound the arithmetic circuit complexity of a polynomial $f$ from above by a polynomial function of the 
categorical complexity of the corresponding morphism (or that of the algebraic variety defined by $f$), and thus match the result
in Part \eqref{itemlabel:thm:diagramtocircuit:1} of Theorem \ref{thm_diagramtocircuit} which contains an inequality in the other direction. 

The automorphism group of $\PP_k^n$ is much smaller than that of $\AA^n_k$. Indeed, 
$\op{Aut}(\PP_k^n) \cong PGL(n+1,k)$, where $PGL(n+1,k)$ denotes the group of invertible $(n+1) \times (n+1)$ matrices with entries in $k$ modulo multiplication by $k^*$ (see for example \cite[Chapter 0, \S 5 b)]{Mumford}). Thus, the automorphisms of $\PP_k^n$ are all linear,
and hence
the complexity of a projective hypersurface $Z(f) \subset \PP^N_k$ should be closer to the  
 arithmetic circuit
 complexity of its defining polynomial $f$. 
However, since the basic objects are not projective, it is better to consider the corresponding setup in the category of graded algebras.

To this end, we could consider the category $\GA_k$ of graded algebras 
(given  a graded $k$-algebra $S$, the projective scheme $\op{Proj} S$ will be the corresponding geometric object).
The following set of morphisms will be taken to be the basic morphisms.

\begin{align}
\label{eqn:graded-basic}
\nonumber
  k[z^n] \xtonormal{+}  k[x^n,y^n]  &     \t{, }   z^n \mapsto x^n+y^n, \\ \nonumber
k[z^{2n}] \xtonormal{\cdot}  k[x^n,y^n] & \t{, }   z^{2n} \mapsto x^ny^n,\\ \nonumber
k[z^n]\xtonormal{i_1}   k[x^n,y^n]   &    \t{, }   z^n \mapsto x^n,\\  \nonumber
k[z^n] \xtonormal{i_2}   k[x^n,y^n]   &      \t{, }   z^n \mapsto y^n, \\ \nonumber
k[z^n] \xtonormal{c \times}   k[z^n]  &      \t{, }   z^n \mapsto c z^n, c \in k, \\ 
k[z^n] \xtonormal{}  k          &             \t{, } z^{n}\mapsto 0.
\end{align}

Note that $k[x^n,y^n] \cong k[x^n] \otimes_k k[y^n]$ is the coproduct of $k[x^n]$ and $k[y^n]$ in $\GA_k$, and hence using
colimits we can build graded polynomial rings in any number of variables. Also, the last morphism in the list \eqref{eqn:graded-basic}
allows us ``geometrically speaking''  to build the inclusion of a variety $V \hookrightarrow \PP^N_k$, defined by a set of homogeneous polynomials of the same degree. More precisely, in terms of graded algebras, if $V$ is defined by the ideal $\mathfrak{a}$ with generators
$g_1,\ldots,g_M \in k[z_0,\ldots,z_N]$ homogeneous of the same degree,   then the morphism
$k\left[ z_0,\dots,z_N\right] \rightarrow k\left[z_0,\dots,z_N\right]/\mathfrak{a}$ can be constructed by taking the  colimit of  the diagram
\begin{equation}
\label{eqn:colimit}
\xymatrix
{
k & k[ y_0,\ldots,y_M] \ar[l]_{0\hspace{.3in}}  \ar[r]^{y_i \mapsto g_i}&k[ z_0,\ldots z_N],
}
\end{equation}
noticing that the morphism $k[ y_0,\ldots,y_M] \xtonormal{0} k$ can be built from the basic morphism
$k[z] \xtonormal{}  k , z\mapsto 0$ (cf. Eqn. \eqref{eqn:graded-basic}) by taking colimits.

\begin{remark}
\label{rem:proj-is-not-a-functor}
We should  remind the reader here of one unpleasant aspect of the translation between algebra and geometry 
via taking $\op{Proj}$ of graded rings.
Unlike the functor 
\[
\op{Spec}: \opcat{Alg}_{k}^{\op{op}} \rightarrow \opcat{AffSch}_{k} 
\]
the map 
\[
\op{Proj}: \GA_{k}^{\op{op}} \rightarrow  \opcat{ProjSch}_k
\]
is not a functor, since to a morphism of graded $k$-algebras one can associate only a partial map between the 
corresponding projective schemes, and for graded $k$-algebras $R$ and $R'$, $\op{Proj} \; R$ and $\op{Proj}\; R'$ could be isomorphic, without $R$ being isomorphic to $R'$. Thus, isomorphisms of projective varieties considered in this section do not 
necessarily correspond exactly to isomorphisms of the corresponding graded algebras. It might be possible to consider as our objects
equivalence classes of graded algebras $S$ giving isomorphic $\op{Proj}\; S$, but we will not attempt to do this here. 
\end{remark}

Now consider for a tuple of homogeneous polynomial $f = (f_0,\ldots,f_N) \in k\left[ x_0,\dots,x_n \right]$, the diagram 
$$k[z_0,\ldots,z_N]\xtonormal{z_i\mapsto f_i}k\left[ x_0,\dots,x_n \right]$$ denoted also by $\op{D}_f$. 

However, the complexity of $\op{D}_f$ could still possibly be different from the 
arithmetic circuit 
complexity of $f$. The problem arises if morphism $\PP^n_k \rightarrow \PP^N_k$ induced by $f$ is an embedding of varieties.
More precisely, an embedding $ \nu:\PP_k^n = \op{Proj}k\left[ x_0,\dots,x_n\right]  \rightarrow \PP_k^N = \op{Proj}k\left[ z_0,\dots,z_N\right]$. 
In terms of graded algebras, this would correspond to a morphism $\op{D}_f: k\left[ z_0,\dots,z_N\right] \rightarrow k\left[ x_0,\dots,x_n\right]$
given by a tuple $f = (f_0,\ldots,f_N)$ of homogeneous polynomials of the same degrees, with $\op{D}_f$ being the map
$z_i \mapsto f_i$.
For example, $\nu$ could be the Veronese embedding of degree $d$, and in which case $N = \binom{n+d}{d}$, 
and $f$ would be the tuple all monomials
in $x_0,\ldots,x_n$ of degree $d$. But the inclusion $\nu(\PP_k^n) \hookrightarrow \PP_k^N$ also correspond to a 
morphism of graded algebras, namely 
$k\left[ z_0,\dots,z_N\right] \rightarrow k\left[z_0,\dots,z_N\right]/\mathfrak{a}$, where $\mathfrak{a}$ is the homogeneous ideal
of $\nu(\PP^n_k)$. Suppose that $\mathfrak{a}$ is generated by the homogeneous polynomials $g_0,\ldots,g_M$. In the case
when $\nu$ is a Veronese embedding, the polynomials $g_i$ can be taken to be a set of quadratic binomials. Now the morphism
$k\left[ z_0,\dots,z_N\right] \rightarrow k\left[z_0,\dots,z_N\right]/\mathfrak{a}$ can be constructed by taking the  colimit 
(see \eqref{eqn:colimit} above).

If the $g_i$'s have small colimit complexity compared to the  $f_i$'s then the second morphism will have smaller colimit
complexity. And thus the categorical complexity of the embedding $\nu:\PP^n_k \rightarrow \PP^N_k$ could be determined by 
a categorical computation involving only the polynomials $g_0,\ldots,g_M$, and in principle could be much smaller than the arithmetic circuit complexity of $f$.  

To prevent the phenomenon described above, which might cause the categorical complexity of a morphism 
$\op{D}_f$ to diverge from the arithmetic circuit complexity of $f$, we consider 
the category whose objects are pairs $(X,\PP^n_k)$, with $X$ a subvariety of $\PP^n_k$. Or rather, we consider the corresponding morphisms of the coordinate algebras.  
We denote by $\GAP_k$ the category whose objects are 
surjective morphisms $(A \xto{f} B)$, where $A$ is isomorphic to some 
polynomial ring $k[x_0,\ldots,x_n]$ graded by degree. For example, suppose that 
$A = k[x]$ and $B = k[x]/(x^2)$ both graded by degree, and $A \xto{f} B$ the canonical surjection.

In order to define categorical complexity in $\GAP_k$ we define the basic morphisms as follows. For each $n \geq 1$, we include in the set of basic morphisms the following set
of morphisms (all polynomial rings appearing below are graded by degrees and the degrees of the indeterminates $x,y,z,\ldots$ appearing in the morphisms are all equal to $1$):
\begin{align}
\label{eqn:graded-pairs-basic}
\nonumber
(k[z^n] \xtonormal{\sim} k[z^n]) \xtonormal{+} (k[x^n,y^n] \xtonormal{\sim} k[x^n,y^n])& \t{, }z^n \mapsto x^n+y^n, \\
\nonumber
(k[z^{2n}] \xtonormal{\sim} k[z^{2n}]) \xtonormal{\cdot} (k[x^n,y^n] \xtonormal{\sim} k[x^n,y^n])& \t{, }z^{2n} \mapsto x^ny^n,\\
\nonumber
(k[z^n] \xtonormal{\sim} k[z^n]) \xtonormal{i_1} (k[x^n,y^n] \xtonormal{\sim} k[x^n,y^n])& \t{, }z^n \mapsto x^n, \\
\nonumber
(k[z^n] \xtonormal{\sim} k[z^n]) \xtonormal{i_2} (k[x^n,y^n] \xtonormal{\sim} k[x^n,y^n])& \t{, }z^n \mapsto y^n, \\
\nonumber
(k[z^n] \xtonormal{\sim} k[z^n]) \xtonormal{c \times } (k[z^n] \xtonormal{\sim} k[z^n]) & \t{, }z^n \mapsto c z^n, c \in k, \\
(k[z^n] \xtonormal{\sim} k[z^n]) \xtonormal{}  (k[x^n] \xtonormal{(x\mapsto 0)} k)& \t{, }(z^n\mapsto x^n, z^n\mapsto 0).
\end{align}

\begin{remark}
\label{rem:gapk}
Notice that in all but the the last morphism in the above list, the pairs occurring as source and target of the morphism consist of
isomorphic objects, and the morphism between them is diagonal (i.e. of the form $(\phi,\phi)$ for some morphism $\phi$ in the
category $\GA_k$). Only in the last morphism this is not true, and the morphism between the pair in this case is $(\op{id}, \phi)$,
where $\phi$ is precisely the morphism used to build inclusion of varieties $\GA_k$ (cf. \eqref{eqn:colimit}). This remark will be used 
in what follows.   
\end{remark}

Notice that we can obtain the morphism
$$ (k[z^n] \xtonormal{(z^n\mapsto 0)} k) \xtonormal{(z^n\mapsto 0)} (k[x^n] \xtonormal{\sim} k[x^n]) $$ as the colimit diagram:

\[\xymatrixcolsep{3.8pc}
\xymatrix{
{(k[t^n] \xtonormal{\sim} k[t^n])} \ar[d]^{t^n \mapsto z^n, t^n \mapsto 0}\ar[r]^{t^n\mapsto 0,t^n\mapsto 0} & {(k[y^n] \xtonormal{\sim} k[y^n])} \ar[d]\\
{(k[z^n] \xtonormal{(z^n\mapsto 0)} k)} \ar[r]^{(z^n \mapsto 0)} & {(k[x^n] \xtonormal{\sim} k[x^n])}  
}
\]

Given a tuple of homogeneous polynomials $f=(f_1,\ldots,f_N)  \in k[x_0,\ldots,x_n]^{N+1}$, we consider the categorical complexity of the diagram $\op{M}_f$ defined as, 
\[
(k[z_0,\ldots,z_N] \xtonormal{\sim} k[z_0,\ldots,z_N]) \xtonormal{\op{M}_f} (k[z_0,\ldots,z_n] \xtonormal{\sim} k[x_0,\ldots,x_n]),
z_i \mapsto f_i.
\]
Following the same proof as Theorem~\ref{thm_circuittodiagram}, we have:
\begin{proposition}
\label{prop:GAP}
  Given a tuple of homogeneous polynomials 
  \[
  f = (f_0,\ldots,f_N) \in k\left[ x_0,\dots,x_n \right]^{N+1}
  \] having  arithmetic  circuit complexity $M$, the colimit complexity of the morphism diagram $\op{M}_f$ is in $\bigO(M)$.
\end{proposition}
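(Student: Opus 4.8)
Fix an arithmetic circuit — equivalently, a straight-line program $\Gamma$ — of size $M$ computing $f_0,\ldots,f_N$. The plan is to transcribe the construction in the proof of Theorem~\ref{thm_circuittodiagram} into $\GAP_k$, with two adjustments. First, $\GA_k$ here plays the role that $\opcat{Alg}_k\simeq\opcat{AffSch}_k^{\op{op}}$ played there, so every \emph{limit} in that argument becomes a \emph{colimit}: products of affine spaces become coproducts of polynomial rings, and the device ``form a composite by taking one more limit'' (second part of Remark~\ref{rem_compositionsanduniversal}) becomes its colimit dual. Second, we work with \emph{pairs}; but $\op{M}_f$ is a diagonal morphism between the identity pairs $(k[z_0,\ldots,z_N]\xtonormal{\sim}k[z_0,\ldots,z_N])$ and $(k[x_0,\ldots,x_n]\xtonormal{\sim}k[x_0,\ldots,x_n])$, and by Remark~\ref{rem:gapk} every basic morphism of \eqref{eqn:graded-pairs-basic} except the last is the diagonal image $(\phi,\phi)$ of the corresponding $\GA_k$-basic morphism of \eqref{eqn:graded-basic}. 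Hence the whole computation can be performed in $\GA_k$ and then lifted diagonally, replacing each $\GA_k$-basic morphism $\phi$ by $(\phi,\phi)$ and each colimit by the colimit of the corresponding diagram of identity pairs, at the same cost; the last, non-diagonal $\GAP_k$-basic morphism serves only to build inclusions of subvarieties and is not needed. It therefore suffices to produce, by a colimit computation in $\GA_k$ of cost $\bigO(M)$ with all coproduct inclusions retained (so that constructivity holds), the morphism $k[z_0,\ldots,z_N]\to k[x_0,\ldots,x_n]$, $z_i\mapsto f_i$ — the graded analog of Remark~\ref{rem_fcomp}.

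For this, first build $k[x_0,\ldots,x_n]$ as a single colimit of $n+1$ disjoint copies of the basic morphism $k[z]\xtonormal{1}k[z]$, keeping the coproduct inclusions. Then induct on the lines $\Gamma_1,\ldots,\Gamma_M$, maintaining after $k$ lines a morphism $k[w_1,\ldots,w_k]\to k[x_0,\ldots,x_n]$, $w_j\mapsto g_j$, where $g_j$ is the polynomial computed at line $j$, together with the coproduct inclusions of the source (all kept in $D_0$). To process $\Gamma_{k+1}$: if it introduces an input $x_\ell$ or a constant, attach the relevant coproduct inclusion or the morphism $k[w]\xtonormal{}k$ of \eqref{eqn:graded-basic}; if it is $g_{k+1}=g_i+g_j$, $g_{k+1}=g_i\cdot g_j$, or $g_{k+1}=c\,g_i$, attach the basic morphism $+$, $\cdot$, or $c\times$ to the copies of $k[w_i],k[w_j]$ already present and take the $\bigO(1)$ colimits realizing the composite and extending the morphism to $k[w_1,\ldots,w_{k+1}]\to k[x_0,\ldots,x_n]$, $w_{k+1}\mapsto g_{k+1}$. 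Each line costs $\bigO(1)$, for $\bigO(M)$ in total. Finally, precompose with the coproduct inclusion $k[z_0,\ldots,z_N]\hookrightarrow k[w_1,\ldots,w_M]$ selecting the output lines — built by one colimit of $N+1$ copies of $k[z]$ and one co-tupling colimit (colimit version of Remark~\ref{rem_compositionsanduniversal}) — and compose, by one further colimit, to obtain $\op{M}_f$ (read diagonally). Since a straight-line program of size $M$ computing $f_0,\ldots,f_N$ satisfies $M\ge n+1$ and $M\ge N+1$, the total cost is $\bigO(M)$. If one wants the cleanest count the intermediate colimits can be collapsed using Lemma~\ref{lem:constructivecomputation}, but this is not needed for the $\bigO(M)$ bound.

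The one genuinely new point — and the main thing to be careful about — is the grading. The morphisms $+$ and $\cdot$ of \eqref{eqn:graded-basic} act on generators of \emph{equal} degree, and a morphism of graded rings sends a homogeneous generator to a homogeneous element, so every intermediate polynomial $g_j$ produced above must be homogeneous; thus the argument applies directly when $\Gamma$ is homogeneous, which we assume (a general circuit being replaced by a homogeneous one of size polynomial in $\deg f$ times $M$). Everything else is checked verbatim as in Theorem~\ref{thm_circuittodiagram}; in particular the constructivity condition of Definition~\ref{def_categorical-computation} holds because every coproduct inclusion and every previously built component is retained in the running diagram, so any subdiagram whose colimit is formed at a later step contains the subdiagrams of all the colimits it reuses.
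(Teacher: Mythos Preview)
Your proposal is correct and follows exactly the approach the paper intends: the paper's entire proof is the single sentence ``Following the same proof as Theorem~\ref{thm_circuittodiagram}, we have,'' and your write-up is precisely the dualization-and-diagonal-lift of that argument that this sentence points to. In fact you are more careful than the paper, since you explicitly flag the grading constraint (the basic $+$ and $\cdot$ in \eqref{eqn:graded-basic} combine only equal-degree generators, forcing the underlying circuit to be homogeneous) which the paper's one-line proof passes over in silence.
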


We claim also that the phenomenon described previously in the non-embedded setting does not occur in this 
embedded pair setting.
Consider for example, 
a embedding $\nu:\PP_k^n \rightarrow \PP_k^N$ as before, but now consider 
the embedding of the pair $(\PP_k^n,\PP_k^n)$ into $\PP_k^N,\PP_k^N)$ by the diagonal morphism
$(\nu,\nu)$.
In terms of pairs of graded algebras, this would correspond to a morphism 
\[\op{M}_f =(\op{D}_f,\op{D}_f): (k\left[ z_0,\dots,z_N\right] \xtonormal{\sim}k\left[ z_0,\dots,z_N\right])\rightarrow (k\left[ x_0,\dots,x_n\right] \xtonormal{\sim} k\left[ x_0,\dots,x_n\right] ),
\]
given by a tuple $f = (f_0,\ldots,f_N)$ of homogeneous polynomials of the same degrees, with $\op{D}_f$ being the map
$z_i \mapsto f_i$.
Suppose also as before that  the inclusion $\nu(\PP_k^n) \hookrightarrow \PP_k^N$ also correspond to a 
morphism of graded algebras, namely 
$k\left[ z_0,\dots,z_N\right] \rightarrow k\left[z_0,\dots,z_N\right]/\mathfrak{a}$, where $\mathfrak{a}$ is the homogeneous ideal
of $\nu(\PP^n_k)$. Suppose that $\mathfrak{a}$ is generated by the homogeneous polynomials $g_0,\ldots,g_M$.  
However, unlike in the non-embedded case we cannot construct the morphism,
\[
(k\left[ z_0,\dots,z_N\right] \xtonormal{\sim} k\left[ z_0,\dots,z_N\right]) \rightarrow  (k\left[z_0,\dots,z_N\right]/\mathfrak{a} \xtonormal{\sim} k\left[z_0,\dots,z_N\right]/\mathfrak{a})
\]
using the polynomials $g_i$ and taking a colimit, since the morphism $k[z] \xtonormal{0} k$ used to construct the morphism
$k[ y_0,\ldots,y_M] \xtonormal{0} k$ in the 
colimit in \eqref{eqn:colimit}, is available only in the second slot of the pairs and not in the first 
(see Remark \ref{rem:gapk}).

We can now ask whether the converse of Proposition \ref{prop:GAP}  is true.

\begin{question}
\label{question:gap}
For a tuple of homogeneous polynomials $f = (f_0,\ldots,f_N) \in  k\left[ x_0,\dots,x_n \right]^{N+1}$ for which the colimit complexity of the morphism diagram $\op{M}_f$ is in $\GAP_k$ is $M$, is the arithmetic circuit complexity of $f$ 
polynomially bounded in $M$ ? 
\end{question}

\subsection{Arithmetic  circuit and categorical complexity of modules over polynomial rings}\label{sec:modules} 
We now consider the relationship between arithmetic circuit complexity and categorical complexity in categories of modules. 
Unlike in the categories of affine varieties and graded algebras (namely, $\Aff_k$, $\GA_k$, $\GAP_k$) considered in the last two sections,
we are able to relate polynomially the arithmetic circuit complexity and categorical complexity of polynomials (and their
induced morphisms in the module category) by proving inequalities in both directions. Thus, we are able to prove stronger 
relation between circuit complexity and categorical complexity -- though the category is  perhaps less interesting from the geometric point of view than the ones considered previously. 

Let $R = k\left[ x_1,\dots,x_n \right]$. We consider the category $R\t{-}\opcat{Mod}$ of $R$-modules. 
We consider colimit computations in $R\Mod$ with the following set of basic morphisms.

\begin{eqnarray}
\label{eqn:basic-RMod}
\nonumber
 &&  R \xtonormal{x_i} R, \t{ for each }i=1,\dots,n, \\
 \nonumber
 &&  R \xtonormal{c} R,\t{for each }c\in k, \\
 &&  R \xtonormal{i_1,i_2} R \oplus R, \\ 
 \nonumber
 &&  R \xtonormal{\Delta} R \oplus R, \\ 
 \nonumber
 &&  R \oplus R \xtonormal{+} R, \\
 \nonumber
 &&  R \to \left\{ 0 \right\}.
\end{eqnarray}

For a polynomial $f\in k\left[ x_1,\dots,x_n \right]$, we consider the corresponding morphism $R\xto{f}R$ that sends $1$ to $f$. Recall that a formula is an arithmetic circuit or straight line program where past intermediate computations cannot be re-used (see \cite[\S 2.2.1]{Burgisser-book2}).

\begin{proposition}
\label{prop:formula-vs-RMod}
  If a polynomial $f\in k\left[ x_1,\dots,x_n \right]$ is computed by a formula of size $s$, then the diagram $R\xto{f} R$ is computed by a colimit computation in $R\Mod$ with cost bounded by $\bigO(s)$. 
\end{proposition}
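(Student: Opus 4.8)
The plan is to translate the formula $\Phi$ computing $f$ into a colimit computation in $R\Mod$, processing $\Phi$ from its leaves toward its root and spending $O(1)$ colimit steps per gate. Three elementary observations drive the construction. First, for a polynomial $p$ the map ``multiplication by $p$'' is an $R$-module endomorphism of $R$, namely the morphism $R\xtonormal{p}R$ sending $1$ to $p$; multiplication of polynomials corresponds to \emph{composition} of these endomorphisms, and the basic morphisms $R\xtonormal{x_i}R$ and $R\xtonormal{c}R$ of \eqref{eqn:basic-RMod} are precisely multiplication by $x_i$ and by the scalar $c$. Second, the composite of a chain $X_0\to X_1\to\cdots\to X_\ell$ is obtained by a single colimit of that chain (dually to Example~\ref{eg:examples-of-limits}): the colimit object is $X_\ell$ and the cocone leg out of $X_0$ is the total composite. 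Third, since $R\Mod$ is additive, addition of endomorphisms is available through the basic diagonal $R\xtonormal{\Delta}R\oplus R$ and codiagonal $R\oplus R\xtonormal{+}R$: for $g,h\colon R\to R$ one has $g+h=(+)\circ(g\oplus h)\circ\Delta$, where $g\oplus h\colon R\oplus R\to R\oplus R$ is assembled from $g$, $h$, the coproduct $R\oplus R$ (a colimit of two copies of $R$), and the dual of the second part of Remark~\ref{rem_compositionsanduniversal}, which extracts from a colimit the morphism implied by a universal property at the cost of one further colimit.

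With these in hand I would argue by induction on a subformula $\Psi$ computing a polynomial $p$: there is a colimit computation of cost $O(|\Psi|)$ in whose final diagram the endomorphism $R\xtonormal{p}R$ appears as a subdiagram, with a distinguished ``input'' and ``output'' vertex. A leaf is handled by one basic morphism. For a product gate $p=p_1p_2$, one arranges the output vertex of the first child's computation to coincide with the input vertex of the second's, and one colimit of the resulting three-term chain yields $R\xtonormal{p_1p_2}R$. For a sum gate $p=p_1+p_2$ with children producing $R\xtonormal{p_i}R$ on wires $X_i\to Y_i$, one builds the coproducts $C=X_1\oplus X_2$ and $D=Y_1\oplus Y_2$ (colimits whose cocone legs are the coproduct inclusions), forms the two composites $X_i\to Y_i\hookrightarrow D$, then the induced map $q\colon C\to D$ via the dual of Remark~\ref{rem_compositionsanduniversal}, attaches the basic $\Delta$ into $C$ and the basic $+$ out of $D$, and finally takes one colimit of the chain $R\xtonormal{\Delta}C\xtonormal{q}D\xtonormal{+}R$ to obtain $R\xtonormal{p_1+p_2}R$. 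Each gate thus costs $O(1)$ steps, and --- crucially because $\Phi$ is a \emph{formula} --- the two subcomputations feeding a gate are disjoint except at the distinguished wire vertices, so they glue without duplication and the total cost is $O(s)$. (This is exactly where the restriction to formulas enters: in a general circuit a shared intermediate value would have to be recomputed for each use, causing a blow-up consistent with the exponential gap between formula size and circuit size, cf.\ Remark~\ref{rem:RMod}.)

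The substance of the proof, as in the proof of Theorem~\ref{thm_circuittodiagram}, is the bookkeeping needed to fit this into Definition~\ref{def_categorical-computation} literally, and the hard point is the same one met there: several steps want to attach a basic morphism (the diagonal $\Delta$, the codiagonal $+$, an identity $R\xtonormal{1}R$, or a zero $R\xtonormal{0}R$) whose source or target is to be an object produced earlier in the computation --- e.g.\ the coproduct $C$ at a sum gate, or the output wire of a child at a product gate. As in \loccit, this is handled by laying out $D_0$ from the start with enough disjoint copies of the basic morphisms and with wire-vertices identified according to the tree shape of $\Phi$, so that the ``middle'' object of each composition is a genuine shared vertex. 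One must also check the constructivity condition (Definition~\ref{def_categorical-computation}\eqref{itemlabel:def:categorical-computation:3}): when a colimit built at an earlier gate is reused inside the subdiagram of a later gate, the subdiagram that produced it must be included, but by the tree structure this only drags in the part of $D_0$ below the current node, enlarging the diagrams while leaving the number of colimit steps and of $D_0$-edges $O(s)$, so the cost bound is unaffected. Finally one verifies that the morphism sitting on the two root wire-vertices is indeed multiplication by $f$, so the diagram $R\xtonormal{f}R$ is computed.
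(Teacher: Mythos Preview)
Your approach is essentially the same as the paper's --- induct on the formula tree, handle products by composition, handle sums via $\Delta$ and $+$, and observe that the tree shape prevents blow-up --- and the cost accounting is correct. The difference is in the inductive invariant, and here the paper's choice is cleaner in a way worth noting. You carry the morphism $R\xto{p}R$ itself (with an input and output vertex) through the induction, which forces you at a sum gate to build the coproducts $C,D$ as intermediate colimits and then attach the basic morphisms $\Delta,+$ to these colimit vertices; as you yourself observe, this is not directly allowed, since basic morphisms live only in $D_0$. Your proposed fix (``lay out $D_0$ from the start'') is right, but once you carry it out the intermediate coproduct colimits and universal-property steps become unnecessary: the paper's inductive invariant is not the morphism $R\xto{p_C}R$ but the \emph{diagram $D_C$ of basic morphisms} (so entirely inside $D_0$), with a distinguished left and right copy of $R$. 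Products are handled by identifying the right $R$ of $D_{C'}$ with the left $R$ of $D_{C''}$ in $D_0$; sums by attaching the basic arrows $i_1,i_2,\Delta,+$ directly between these boundary $R$'s and two fresh $R\oplus R$ vertices, all in $D_0$. A single colimit of the final $D_\Phi$ then produces $R\xto{f}R$ as a cocone leg. This sidesteps entirely the attachment and constructivity bookkeeping you flag.
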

\begin{proof}
  Without loss of generality, assume that all sum and product gates have fan-in at most two. 
  We will build, for each formula $C$, a diagram $D_C$ whose colimit will contain $R \xto{p_c} R$ where $p_C$ is the output polynomial of $C$. This will be done inductively on the size of $C$.
  
 Each $D_C$ will be a diagram of the form   
\[\xymatrixcolsep{1pc}
\xymatrix{
R  \ar[r] &  \fbox{\parbox{1.3in}{$\,\,\,\,\,\,\,\,$\\\,\,\,\,\\\,\,\,\,}}\ar[r] & R  
}.
\]
whose colimit is $R$ with the morphism from the $R$ on the right to the colimit being $\id_{R}$ and the morphism from the $R$ on the left to the colimit being defined by $1 \mapsto p_C$. 

If the output $p_C$ of $C$ is one of the variables $x_i$ let $D_C$ be the diagram $R\xto{x_i} R$. If it just a constant, then $D_C$ is $R\xto{c}R$.

If the top gate of $C$ is a product gate with $C'$ and $C''$ as the left and right sub-circuits, then we set $D_C$ by chaining together $D_{C'}$ and $D_{C''}$: 
\[\xymatrixcolsep{1pc}
\xymatrix{
R  \ar[r] &  \fbox{\parbox{1.3in}{$\,\,\,\,\,\,\,\,$\\\,\,\,\,\\\,\,\,\,}}\ar[r] & R \ar[r] & \fbox{\parbox{1.3in}{$\,\,\,\,\,\,\,\,$\\\,\,\,\,\\\,\,\,\,}}\ar[r] & R  
}.
\]
The map from the left-most $R$ to the colimit is the composition $R \xto{p_{C'}} R \xto{p_{C''}} R$, which is $R \xto{p_{C'}p_{C''}} R$.  

If the top gate of $C$ is a sum gate with $C'$ and $C''$ as the left and right sub-circuits, then we define $D_C$ as
\[
\xymatrixrowsep{1pc}
\xymatrix{
\, & \, & R\ar[r]\ar[dl]_{i_1} &  \fbox{\parbox{1.3in}{\center $\,$\\$\,$}}\ar[r] & R\ar[dr]^{i_1} & &  \\  
R\ar[r]^{\Delta} & R\oplus R & & & & R\oplus R\ar[r]^{+} & R \\
\, & \, & R\ar[r]\ar[ul]^{i_2} &  \fbox{\parbox{1.3in}{\center $\,$\\$\,$}}\ar[r] & R\ar[ur]^{i_2} & &  \\  
}.
\]
where the top and bottom rows are $D_{C'}$ and $D_{C''}$. The colimit of this diagram is again $R$ with the map from the left-most $R$ to the colimit being $p_{C} + p_{C'}$.
\end{proof}

What about a converse? What does the existence of a colimit computation in $R\Mod$ that produces $R\xto{1\mapsto f} R$ say about the complexity of $f$?

\begin{theorem}\label{thm_rmoddiagramtocircuit}
  Let $R = k\left[ x_1,\dots,x_n \right]$. If $R\xto{f} R$ is computed in a colimit computation with cost  $c$ in $R\Mod$, then there is an arithmetic circuit of size $\op{poly}(c)$ with inputs $x_1,\dots,x_n$, that computes $f$.
\end{theorem}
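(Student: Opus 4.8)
The plan is to reverse-engineer an arithmetic circuit from the colimit computation: constructivity lets me reduce $f$ to a component of the universal cocone of a single colimit of basic morphisms, a determinantal description bounds its degree and gives it a circuit \emph{with divisions}, and Strassen's elimination of divisions (cf.\ \cite{Burgisser-book2}) removes them.

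First I would reduce. Since $R\xto{f} R$ is isomorphic to a subdiagram of $D_s$, the morphism $f$ is realized by a single edge of $D_s$. If that edge lies in $D_0$ it is basic, so $f\in\{x_1,\dots,x_n\}\cup k$ and there is nothing to prove. Otherwise it is a cocone edge, so its target $B$ is a colimit; by constructivity and Lemma~\ref{lem:constructivecomputation}, $B\cong\colim D'$ for a full-subgraph subdiagram $D'$ of $D_0$, of size $O(c)$, and by Remark~\ref{rem:constructive-morphism-colimits} its source $A$ is either a single vertex of $D'$ (necessarily a copy of $R$) or $A\cong\colim D''$ for a full subdiagram $D''\subseteq D'$. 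All basic morphisms involved have source and target among $R,\,R\oplus R,\,0$, so by Remark~\ref{rem:constructive-coequalizer} we may write $\colim D'=\coker(\Phi\colon R^{M}\to R^{N})$ with $M,N=O(c)$, where $\Phi$ is read off explicitly from the incidence data and the basic morphisms and each entry of $\Phi$ is a polynomial of degree $\le1$ with at most two terms (every entry lies in $\{0,\pm1\}\cup\{\pm c:c\in k\}\cup\{\pm x_i\}\cup\{1\pm c\}$).

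Because $\colim D'\cong R$, the quotient map $p\colon R^{N}\to R$ is a unimodular row $(q_1,\dots,q_N)$ with $p\Phi=0$, and $\ker(\Phi^{T})=\op{Ann}(\op{im}\Phi)$ is free of rank one with generator $p^{T}$; the entries $q_v$ are the components of the universal cocone. The cocone maps pin down $f$: in the first case $f=q_a$ where $a=A$, and in the second $f\cdot q'_v=q_v$ for every $R$-coordinate $v$ of $D''$, where $(q'_v)$ is the corresponding cocone for $\coker\Phi''$, so that $f=q_v/q'_v$ with $\deg f\le\deg q_v$. To bound these, pick $N-1$ linearly independent rows of the $M\times N$ matrix $\Phi^{T}$ (they exist since $\op{rank}\Phi^{T}=N-1$); Cramer's rule gives a nonzero $v\in\ker\Phi^{T}$ whose entries are $(N-1)\times(N-1)$ minors of a matrix of linear forms, and $v=g\,p^{T}$ for some nonzero $g\in R$. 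Hence each $q_v$ divides such a minor, so $\deg f\le N-1=\op{poly}(c)$; and, since determinants of $r\times r$ matrices of linear forms have division-free circuits of size $\op{poly}(r)$, the polynomials $v_v$ and (applying the same argument to $D''$) $v'_v$ all have circuits of size $\op{poly}(c)$, whence $f=q_v/q'_v=v_v/v'_v$ is a quotient of $\op{poly}(c)$-size circuits. As $\deg f=\op{poly}(c)$, Strassen's division-elimination yields a division-free arithmetic circuit for $f$ of size $\op{poly}(c)$, completing the proof (if $k$ is finite one first passes to an infinite extension, which changes neither $f$ nor the bound).

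I expect the delicate point to be exactly the last step: the computation only produces $f$ up to the unit ideal — or, in the $A=\colim D''$ case, only up to a polynomial factor — so one must recover it as an honest polynomial and nevertheless exhibit a \emph{division-free} polynomial-size circuit; the a priori degree bound coming from the determinantal description of the cocone components is what makes Strassen's elimination applicable, and getting that description (including the bookkeeping for the $R\oplus R$ vertices and the two source types) is the bulk of the work. A more self-contained alternative I would keep in reserve is to orient the edges of $D'$ from source to target, observe that the relations $p\Phi=0$ then read off as arithmetic gates (multiplication by an $x_i$ or a scalar, addition, copying), and argue that $\colim D'\cong R$ forces all remaining freedom to collapse to one parameter — cycles kill the components they meet, disconnected free sinks are ruled out by connectedness — so that normalizing that parameter to $1$ directly displays an $O(|D'|)$-size circuit computing all the $q_v$; but making this structural dichotomy precise over all vertex types is essentially as much work as the determinantal route.
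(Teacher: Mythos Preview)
Your overall architecture matches the paper's: reduce via constructivity to a single colimit of basic morphisms, present that colimit as a cokernel $R^{M}\xto{\Phi}R^{N}$ with degree-$\le 1$ entries, solve a linear system for the cocone row, get a polynomial degree bound, and finish with Strassen. Replacing the paper's Gaussian elimination over $k(x_1,\dots,x_n)$ by Cramer's rule is a genuine simplification of one step: it delivers an \emph{integral} kernel vector directly (so you never need Kaltofen's denominator-extraction), and the degree bound $\deg q_v\le N-1$ drops out immediately from the minors.

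There is, however, a real gap at the end. Your Cramer vector $v$ satisfies $v=g\,p^{T}$ for some nonzero $g\in R$, and likewise $v'=g'(p')^{T}$ for the subdiagram $D''$. Hence
\[
\frac{v_w}{v'_w}\;=\;\frac{g}{g'}\cdot\frac{q_w}{q'_w}\;=\;\frac{g}{g'}\,f,
\]
not $f$; the claimed identity $q_v/q'_v=v_v/v'_v$ is false. (In your first case this is even starker: with $D''=\{a\}$ one has $v'_a=1$, so your formula outputs $v_a=gq_a$, not $q_a=f$.) The factors $g,g'$ are the contents $\gcd_w v_w$ and $\gcd_w v'_w$, and nothing in your argument produces circuits for them. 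This is exactly the point where the paper invokes Kaltofen's result that the gcd of polynomials given by size-$s$ circuits has a $\op{poly}(s)$ circuit: once you have circuits for $g$ and $g'$, you get $q_w=v_w/g$ and $q'_w=v'_w/g'$, hence $f=q_w/q'_w$, and then Strassen applies with your degree bound. So your proof becomes correct if you insert one call to Kaltofen's GCD after the Cramer step; without it the final division step is not justified. Your ``reserve'' combinatorial alternative would also have to confront the same issue: normalizing ``one free parameter to $1$'' is precisely dividing out the content $g$.
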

\begin{proof}
First of all note that it is a standard fact that the ring $R$ is a unique factorization domain, and  hence
given any finite tuple $(f_1,\ldots,f_q)$ of polynomials in $R$, there exists a unique (up to multiplication
by units) of a greatest common divisor (GCD) of $f_1,\ldots,f_q$. We will need an algorithmic fact
related to computation of GCD in the proof below (cf. \eqref{itemlabel:GCD}).
  
Consider a diagram $D: I \to R\Mod$ consisting only of the basic morphisms described above 
in \eqref{eqn:basic-RMod}.  Assume that we have $\op{colim}{D}=R$. 
  
  For each vertex $v\in I$, we have that $D(v)$ is $R$, $R\oplus R$ or $\left\{ 0 \right\}$. For each $v$ such that $D(v)=R$, let $f_v$ be the image of $1$ under the morphism $R \xto{1\mapsto f_v} R$ from $D(v)$ to the colimit $R$. If $D(v)= \left\{ 0 \right\}$, then we set $f_v = 0$. If $D(v)= R\oplus R$, then we set two polynomials $f_v$ and $f_{v'}$ so that the map $R\oplus R \to R$ to the colimit is given by $(1,0) \mapsto f_{v}$ and $(0,1) \mapsto f_{v'}$. We will prove that each $f_v$ is computed by a polynomially sized circuit.  
  
  We are considering the $f_v$'s as unknowns in a system of equations.  For each arrow in $D$, we consider one or two $R$-linear equations. For an arrow $D(v_1) \to D(v_2)$ of the form given in the left column, we add the equations in the right column:  
\begin{eqnarray*}
  &  R \xtonormal{x_i} R                 & f_{v_1} - x_i f_{v_2}                          \\
  &  R \xtonormal{c} R                 & f_{v_1} - c f_{v_2}                            \\
  &  R \xtonormal{i_1,i_2} R \oplus R    & f_{v_1} - f_{v_2}\t{ or } f_{v_1} - f_{v_2'}   \\ 
  &  R \xtonormal{\Delta} R \oplus R     & f_{v_1} - f_{v_2} - f_{v_2'}                   \\ 
  &  R \oplus R \xtonormal{+} R          & f_{v_1} - f_{v_2} \t{ and } f_{v_1'}-f_{v_2}   \\
  &  R \to \left\{ 0 \right\}      & f_{v_1} = 0 \t{ and } f_{v_2} = 0              .
\end{eqnarray*}
In this way, we obtain a homogeneous system $k\left[ x_{1},\dots,x_{n} \right]$-linear equations; $A\vec{f}=0$, $A\in \op{Mat}_{n_2\times s}(R)$. Tuples $\vec{f} = (f_{v_1},f_{v_2}, f_{v_3},\dots, f_{v_s}) \in k\left[ x_1,\dots,x_n \right]^{s}$ that satisfy this system of equations correspond to a cocones of the diagram $D$ with target $R$. 

Since the colimit of $D$ is $R$, for any such cocone corresponding to $(f_{v_1},\dots,f_{v_s})$, there will be a map $(\colim D = R)\,\, \xto{1 \mapsto g}\,\,\, R$ making the diagram containing the new cocone, the colimit cocone and the map $R \xto{1 \mapsto g} R$ commute. This implies that $g$ divides each $f_{v_{j}}$. Since the colimit is the initial cocone, we can find the tuple of polynomials corresponding to the colimit cocone by taking $(\frac{f_{v_{1}}}{h},\frac{f_{v_2}}{h},\dots,\frac{f_{v_s}}{h})$ where $h = \op{gcd}(f_{v_1},\dots,f_{v_s})$. Thus, assuming the colimit is $R$, to compute the map from every $D(v)$ to the colimit, it suffices to: (\emph{i}) find a solution to the above system of equations for $D$, and (\emph{ii}) divide by $\op{gcd}(f_{v_1},\dots,f_{v_s})$. 

To solve the system $A\vec{f} = 0$, we use Gaussian elimination over the field $k(x_1,\dots,x_n)$. Let $R$ and $C$ be square matrices with entries in $k(x_1,\dots,x_n)$ such that $RAC$ is a diagonal matrix, in the sense that it contains an $r\times r$ minor which is $I_r$, with $r<s$, and all other entries are $0$. Following the steps of Gaussian elimination, there exist circuits (or straight-line programs) with division that produce each entry of $R$ and $C$ in time $\op{poly}(s)$. 
Also, observe that the entries of $R$ and $C$ have degree at most $s$.
Without loss of generality, assume $RACe_1 = 0$. Then 
$$Ce_1 = \left( \frac{p_1}{q_1},\dots, \frac{p_s}{q_s} \right)^{T} \in k(x_1,\dots,x_n)^{s}$$
is a solution to $A\vec{f} = 0$.  

Before we proceed further and get this solution into $k\left[ x_1,\dots,x_n \right]^{s}$, we make a small digression into algorithms about straight-line programs. We consider the following, which are both proven by the algorithms in Kaltofen's \cite{kaltofengcd}: 
\begin{itemize}
  \item[(GCD)] 
  \label{itemlabel:GCD}
  Greatest common divisor: Given polynomials $f_1,\dots,f_q \in k\left[ x_1,\dots,x_n \right]$ which are the output of a circuit of size $s'$, there is a circuit of size $\op{poly}(s')$ that produces the greatest common divisor of $f_1,\dots,f_q$. 
  \item[(DE)] Denominator Extraction: Given a reduced rational function $\frac{p(x)}{q(x)}\in k\left( x_1,\dots,x_n \right)$, produced by a circuit with division of size $s'$, there is a circuit of size $\op{poly}(s')$ that produces $q$. 
\end{itemize}
In \loccit, randomized algorithms that produce the output circuits for both DE and GCD are presented. But the circuits that output the $\op{gcd}$ and the denominator are themselves not randomized. This will be enough in our non-uniform setting; we only use the existence of the of the polynomial size circuits that produce the $\op{gcd}$ and the denominator and do not use the algorithm that produces the circuits themselves.  

Going back to the proof, we made a system of equations $A\vec{f} = 0$ from the diagram of basic morphisms and got a solution of the form $Ce_1 = \left( {p_1}/{q_1},\dots, {p_s}/{q_s} \right)^{T} \in k(x_1,\dots,x_n)^{s}$ and wanted to produce a solution in $k\left[ x_1,\dots,x_n \right]^{s}$ instead. To do this, first use Kaltofen's GCD to assume, without loss of generality that each $\frac{p_i}{q_i}$ is reduced. Then use Kaltofen's DE to extract the denominators $q_i$ from each fraction. The element $q_1q_2\dots q_s Ce_{1}$ is in $k\left[ x_1,\dots,x_n \right]^{s}$ and is a solution to $A\vec{f} = 0$. Now divide $q_1q_2\dots q_s Ce_{1}$ by the $\op{gcd}$ of all of its entries to obtain $(f_{v_1},\dots,f_{v_s}) \in k\left[ x_1,\dots,x_n \right]^{s}$. The polynomials $f_1,\dots,f_s$ correspond to the morphisms from $D$ to the colimit $R$. Each $f_{v}$ is produced by a circuit with division, but since the degrees of $f_v$ are polynomially bounded, each such circuit can be turned into a circuit without division using Strassen's method \cite{strassenvermeidung}. This concludes the proof that for any diagram of basic morphisms with $R$ as a colimit, every colimit cocone morphism from an object in $D$ sends $1$ to an $f_v$ which is computed by a circuit of size polynomial in $s$.

We now prove the theorem. Let $R \xto{1\mapsto f} R$ be a subdiagram of a colimit computation with initial step $D_0$. Call the source $R_1$ and the target $R_2$. By Lemma~\ref{lem:constructivecomputation}, there is a subdiagram $D_0'\subset D_0$ of basic morphisms whose colimit is $R_1$; and (cf.  Remark~\ref{rem:constructive-morphism-colimits}) there is a subdiagram $D_0''\subset D_0$, which contains $D_0'$, and whose colimit is $R_2$, with the induced map $R_1 \to R_2$ being a map that sends $1\mapsto f$. This implies, combined with the first part of this proof applied to both $D_0'$ and $D_0''$, that $f$ is the quotient of two polynomials computed by polynomially sized circuits. Hence, by Strassen's method, $f$ is computed by a circuit of size  polynomial in the size of $D_0$. 
\end{proof}

\begin{remark}
\label{rem:RMod}
Note that the set of sequences of polynomials having formula sizes bounded by some quasi-polynomial function (i.e. a function
of the form $n^{\log^c n}$) coincides with the sequence of polynomials having arithmetic circuit complexity bounded by some
quasi-polynomial function over any field; i.e. using the notation in \cite{Burgisser-book2} $\op{VQP}_e = \op{VQP}$ \cite[Corollary 2.27]{Burgisser-book2}. Thus, Proposition \ref{prop:formula-vs-RMod} and  Theorem \ref{thm_rmoddiagramtocircuit} together imply that
the colimit complexities of a sequence of polynomial morphisms $\left(k[x_1,\ldots,x_n] \xtonormal{1\mapsto f_n} k[x_1,\ldots,x_n]\right)_{n > 0}$ is bounded by quasi-polynomial function of $n$, if and only if,  the arithmetic circuit complexity of the sequence $(f_n)_{n > 0}$ is also bounded by
a quasi-polynomial function.
\end{remark}

\section{Functors}\label{sec:functors}
Here we discuss phenomena of categorical complexity related to functors between categories.

\subsection{Preservation under functors}\label{sec:preservationunderfunctors} 
  
Let $F: \cC \to \cD$ be a functor.  If $D$ is a diagram in $\cC$, the image diagram $F(D)$ is defined in the obvious way. If $F$ preserves finite limits, then, for every every limit computation $(D_0,\dots,D_s)$ in $\cC$, starting with a set of basic morphisms $\cA$ will correspond to a limit computation $(F(D_0),\dots,F(D_s))$ in $\cD$ which starts with basic morphisms in $F(\cA)$. Therefore, in this case, we have
    $$ c^{\op{lim}}_{\cD,F(\cA)}(F(D)) \leq c^{\op{lim}}_{\cC,\cA}(D).$$
We have the analogous statement for colimit computations if $F$ preserves colimits.  

Since equivalences preserve limits and colimits, categorical complexity, limit complexity, colimit complexity and mixed complexity are all invariant under equivalences of categories. A more general case is that of adjoint functors. Let $R:\cC\to \cD$ be a functor right-adjoint to a functor $L: \cD \to \cC$. Then, since right-adjoints preserve limits and left adjoints preserve colimits, we  have the following. 

\begin{lemma}
  \label{lem:adjointfunctors} Let $R:\cC \to \cD$ and $L:\cD \to \cC$ be a pair of adjoint functors. Let $\cA$ be a set of basic morphisms in $\cC$, and $\cA'$ be a set of basic morphisms in $\cD$, such that $R(\cA)\subset \cA'$ and $L(\cA')\subset \cA$. Then we have inequalities of complexities
  \begin{eqnarray*}
    c^{\op{lim}}_{\cD,\cA'}(R(D)) \leq c^{\op{lim}}_{\cC,\cA}(D),\\
    c^{\op{colim}}_{\cC,\cA}(L(D')) \leq c^{\op{colim}}_{\cD,\cA'}(D'),\\
  \end{eqnarray*}
  for every diagram $D$ in $\cC$ and $D'$ in $\cD$.
\end{lemma}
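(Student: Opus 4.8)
The plan is to exploit directly the two structural facts about adjoint functors recalled just before the statement: right adjoints preserve all limits that exist, and left adjoints preserve all colimits that exist. Combined with the hypotheses $R(\cA)\subset\cA'$ and $L(\cA')\subset\cA$, this will let us transport a limit (respectively colimit) computation across the functor step by step, with no increase in cost.

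First I would prove the first inequality. Let $(D_0,\dots,D_s)$ be a limit computation in $\cC$ with basic morphisms $\cA$ that computes $D$, and whose cost equals $c^{\op{lim}}_{\cC,\cA}(D)$. I claim that $(R(D_0),\dots,R(D_s))$ is a limit computation in $\cD$ with basic morphisms $\cA'$. For condition \eqref{itemlabel:def:categorical-computation:1}: every edge of $I_0$ is sent by $D_0$ into $\cA$, so it is sent by $R\circ D_0$ into $R(\cA)\subset\cA'$. For condition \eqref{itemlabel:def:categorical-computation:2}: if $D_i$ is obtained from $D_{i-1}$ by adjoining $\lim D_{i-1}|_{J_i}$ together with its limit cone, then since $R$ preserves this limit, $R$ applied to the vertex $v_0$ together with the cone morphisms $e_v$ exhibits $R(D_{i-1}|_{J_i})$ as having limit $R(D_i(v_0))$ with that very cone; hence $R(D_i)$ is obtained from $R(D_{i-1})$ by adjoining the limit of the subdiagram $R(D_{i-1})|_{J_i}$ with its cone, which is exactly a legal step. (Here the underlying shape graphs $I_i$, $J_i$ are unchanged by applying $R$, so all the combinatorial bookkeeping in Definition \ref{def_categorical-computation} transfers verbatim.) For the constructivity condition \eqref{itemlabel:def:categorical-computation:3}: this is a condition purely on the shape graphs $J_i$, $J_j$ and on which vertices $v_0$ reappear later; since $R$ does not alter any of these graphs, constructivity of $(D_0,\dots,D_s)$ gives constructivity of $(R(D_0),\dots,R(D_s))$ immediately. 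Finally, $R(D)$ is isomorphic to a (not necessarily full) subdiagram of $R(D_s)$ because $D$ is isomorphic to such a subdiagram of $D_s$ and functors preserve isomorphisms and subdiagrams. Thus $(R(D_0),\dots,R(D_s))$ computes $R(D)$; it has the same number of steps $s$ as the original computation, and its initial diagram $R(D_0)$ has cost $\sum_{f\in\operatorname{edges}(I_0)} c_0'(R(D_0(f)))$. With the default unit cost function this equals the cost of $D_0$, so $c^{\op{lim}}_{\cD,\cA'}(R(D)) \leq c(R(D_0),\dots,R(D_s)) = c(D_0,\dots,D_s) = c^{\op{lim}}_{\cC,\cA}(D)$.

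The second inequality is entirely dual: given a colimit computation $(D_0',\dots,D_s')$ in $\cD$ with basic morphisms $\cA'$ computing $D'$, apply $L$; since $L$ preserves colimits and $L(\cA')\subset\cA$, the sequence $(L(D_0'),\dots,L(D_s'))$ is a colimit computation in $\cC$ with basic morphisms $\cA$ computing $L(D')$, of the same cost, which yields $c^{\op{colim}}_{\cC,\cA}(L(D')) \leq c^{\op{colim}}_{\cD,\cA'}(D')$. One small point worth stating explicitly: in the displayed inequalities the cost functions $c$, $c'$ are the default unit-cost ones; if one works with non-trivial cost functions $c_0$ on $\cA$ and $c_0'$ on $\cA'$, the argument still goes through provided one additionally assumes $c_0'(R(g)) \leq c_0(g)$ for $g\in\cA$ (respectively $c_0(L(g')) \leq c_0'(g')$ for $g'\in\cA'$), which is automatic in the unit-cost case.

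There is no serious obstacle here; the only thing to be careful about is the verification that applying $R$ sends a legal limit-step to a legal limit-step, and in particular that the \emph{constructivity} clause is preserved — but as noted this clause refers only to the shape graphs, which $R$ leaves untouched, so it is essentially automatic. The one genuine input is the standard category-theoretic fact, already invoked in the text, that right (respectively left) adjoints preserve limits (respectively colimits); I would cite it rather than reprove it.
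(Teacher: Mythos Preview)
Your proof is correct and follows exactly the approach the paper takes: the paper's argument is the short paragraph preceding the lemma, which notes that a limit-preserving functor sends a limit computation $(D_0,\dots,D_s)$ to a limit computation $(F(D_0),\dots,F(D_s))$ with basic morphisms in $F(\cA)$, and then specializes to $R$ and $L$ via the standard fact that right (respectively left) adjoints preserve limits (respectively colimits). Your version simply spells out in more detail the verification of each clause of Definition~\ref{def_categorical-computation}, including constructivity and the cost count, and adds the remark about non-unit cost functions; none of this departs from the paper's reasoning.
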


\begin{example}
  Let $\opcat{Vect}_{k}$ be the category of vector spaces over a field $k$. As the free vector space functor $\op{Fr}: \opcat{Set} \to \opcat{Vect}_{k}$ is left adjoint to the forgetful functor $\op{Fo} : \opcat{Vect}_{k}\to \opcat{Set}$, we have that $\op{Fr}$ preserves colimits. In particular, the 
   colimit complexity of a vector space is bounded above by its dimension. 
\end{example}

\subsection{Complexity of functors}\label{sec:complexityoffunctor}
We discussed in Section~\ref{sec:preservationunderfunctors} above, how certain functors preserve limits or colimits, and does also 
corresponding notions of complexities as well.  In this section, we will study complexities of general functors.  The first step is to define
the complexity of a functor between categories.

\begin{definition}
\label{def:abstract-complexity}
A \emph{complexity} function on a category $\cC$ is a function that takes (finite) diagrams of $\cC$ to $\mathbb{N} \cup \{\infty\}$.
\end{definition}

\begin{example}
For example, if $\cA$ is a set of morphisms of $\cC$ (assumed to have unit cost), then $c^{\op{lim}}_{\cC,\cA}, c^{\op{colim}}_{\cC,\cA}, c^{\op{mixed}}_{\cC,\cA}$ are all examples of complexity functions on $\cC$.
\end{example}

\begin{definition}[Complexity of functors]\label{def_functorcomplexity}
Let $\mathcal{C},\mathcal{D}$ be two categories with complexity functions,
$\phi, \psi$,
and  let $F: \mathcal{C} \rightarrow \mathcal{D}$ be a functor. We define the complexity, $C_{\phi,\psi}(F):\mathbb{N} \rightarrow \mathbb{N}$ by 
\begin{equation*}
  C_{\phi,\psi}(F)(n) =  \sup \left\{ 
\psi(F(D)) \,\,|\,\, I \t{ a finite directed graph}, 
\,D \in \mathcal{C}^I, 
\, \phi(D) \leq n
\right\}.
\end{equation*}
If $F$ is an endofunctor (i.e. $\cC = \cD$) and $\phi=\psi$, then we will denote $C_{\phi,\psi}(F)$ just by $C_{\phi}(F)$. 
\end{definition}

One key example of functor complexity comes from \emph{image functors}. In order to define the image functor we first need
to recall a few relevant notions from category theory.
 
\subsection{Monomorphisms, slice categories, and the image functor}
Let $\cC$ be any category. Recall the following standard definitions.

\begin{definition}[Monomorphisms]
\label{def:mono}
A morphism $f:A \to B$ in a category $\cC$ is called a \emph{monomorphism} if for all pairs of morphisms $g,h:C \to A$,
$f\compose g = f \compose h \implies  g=h$. 
\end{definition}

\begin{definition}[Slice category]
Let $A$ be an object of $\cC$. The  \emph{slice category}, $\cC/A$,  is the category whose objects are morphisms $B\to A$, and whose
morphisms are commutative diagrams:

\[
\xymatrix{
B \ar[rr]^{f}\ar[rd] &&C\ar[ld] \\
&A&
},
\]
which compose in the obvious way.

The full sub-category of $\cC/A$ consisting of monomorphisms, $f: C \rightarrowtail A$ is denoted $\Sub(A)$ (subobjects of $A$).
\end{definition}

\begin{definition}
\label{def:images}
We say that a category $\cC$ \emph{has images}, if for every object $A$ of $\cC$, the inclusion functor $\op{i}_{\cC,A}:\Sub(A) \rightarrow \cC/A$, has a left adjoint, $\op{im}_{\cC,A}$ (cf. \cite[Lemma 1.3.1]{Johnstone-vol1}).  
\hide{
This implies in particular that,
in the diagram category $\cC^{\arrowdiagram}$, letting $\op{Mon_\cC}$ denote full subcategory of monomorphisms, and $\op{i}_{\cC} : \op{Mon}_{\cC} \to \cC^{\arrowdiagram}$ the inclusion functor, that there is a left-adjoint $\op{im}_{\cC}$ to $\op{i_{\cC}}$,
}
\[
\xymatrix{
\Sub(A)  \ar[r]_{\hspace{.3in}\op{i}_{\cC,A}}  & \cC/A \ar@/_1pc/[l]_{\op{im}_{\cC,A}} \\
}.
\]
\end{definition}

\begin{remark}
Note that the adjointness $\op{im}_{\cC,A} \dashv \op{i}_{\cC,A}$ is equivalent to saying that,
given a monomorphism $f: B\rightarrowtail A$, and a morphism $g:C \to A$ in $\cC/A$,
 there is a natural isomorphism,
\[
 \cC/A(g, \op{i}_{\cC,A}(f)) \cong \Sub(A)(\op{im}_{\cC,A}(g),  f),
\]
where the isomorphism takes $\theta \in  \cC/A(g, \op{i}_{\cC,A}(f)) $ given by the diagram,

\[
\xymatrix{
C \ar[rr]^{\theta} \ar[rd]^{g} && B \ar[ld]^{f} \\
&A &
}
\]

to the element in $\Sub(A)(\op{im}_{\cC}(g),  f)$ described by the diagram
\[ 
\xymatrix{
\op{dom}(\op{im}_{\cC/A}(g)) \ar[rd]^{\op{im}_{\cC/A}(g)} \ar[rr]^{\hspace{.3in}\op{im}_{\cC/A}(\theta)} && B\ar[ld]^f \\
&A&
}.
\]
In particular, taking $f=\op{im}_{\cC,A}(g)$, the image under the inverse of the isomorphism defined above of the
element $1_{f} \in \Sub(A)$, induces a morphism $\eps(g): B \to \op{dom}(\op{im}_{\cC/A}(f))$, making the following diagram commute:
\[
\xymatrix{
B \ar@{..>}[rr]^{\hspace{-.4in}\eps(g)} \ar[rd]^g &&\op{dom}(\op{im}_{\cC/A}(f))\ar[ld]^{\op{im}_{\cC/A}(g)} \\
 &A&
 }.
 \]
\end{remark}

We now extend the image functor from the slice categories $\cC/A$, to the diagram category $\cC^{\arrowdiagram}$.
\begin{proposition}
Suppose that $\cC$ is a category that has pull-backs and images.
Then
in the diagram category $\cC^{\arrowdiagram}$, letting $\op{Mon_\cC}$ denote full subcategory of monomorphisms, and $\op{i}_{\cC} : \op{Mon}_{\cC} \to \cC^{\arrowdiagram}$ the inclusion functor, there exists a left-adjoint $\op{im}_{\cC}$ to $\op{i_{\cC}}$.
\end{proposition}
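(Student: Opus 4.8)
The plan is to upgrade the object-wise image adjunction $\op{im}_{\cC,A} \dashv \op{i}_{\cC,A}$ of Definition~\ref{def:images}, valid in each slice $\cC/A$, to a global adjunction on the arrow category $\cC^{\arrowdiagram}$. First I would make precise how an object of $\cC^{\arrowdiagram}$, i.e.\ a morphism $g : C \to A$, is the same data as an object of the slice $\cC/A$ together with its codomain $A$, and how a morphism in $\cC^{\arrowdiagram}$ from $(g:C\to A)$ to $(g':C'\to A')$ is a commuting square, i.e.\ a pair $(u:C\to C', a:A\to A')$ with $g'\compose u = a\compose g$. The subcategory $\op{Mon}_{\cC}$ consists of those arrows that are monomorphisms; on each fiber over a fixed $A$ this restricts to $\Sub(A) \hookrightarrow \cC/A$. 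So the content of the proposition is that the fiberwise left adjoints $\op{im}_{\cC,A}$ assemble into a single functor $\op{im}_{\cC} : \cC^{\arrowdiagram} \to \op{Mon}_{\cC}$ that is left adjoint to the inclusion.

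The key steps, in order, would be: (1) Define $\op{im}_{\cC}$ on objects by $\op{im}_{\cC}(g:C\to A) = \op{im}_{\cC,A}(g)$, a monomorphism into $A$. (2) Define $\op{im}_{\cC}$ on morphisms: given a commuting square $(u,a)$ from $g$ to $g'$, I need a morphism of monos from $\op{im}_{\cC,A}(g)$ to $\op{im}_{\cC,A'}(g')$ lying over $a$. Here is where pullbacks enter: form the pullback $a^{*}\op{im}_{\cC,A'}(g')$, which is a subobject of $A$ (pullback of a mono is a mono), and observe that the square $(u,a)$ together with the unit $\eps(g') : C' \to \op{dom}(\op{im}_{\cC,A'}(g'))$ induces a morphism $C \to a^{*}\op{dom}(\op{im}_{\cC,A'}(g'))$ over $A$; applying the universal property of $\op{im}_{\cC,A}(g)$ (i.e.\ the adjunction in the slice $\cC/A$, since $a^{*}\op{im}_{\cC,A'}(g')$ is a subobject of $A$) yields the required morphism of subobjects of $A$, and then composing with the canonical map $a^{*}\op{im}_{\cC,A'}(g') \to \op{im}_{\cC,A'}(g')$ lying over $a$ gives the morphism of monos in $\op{Mon}_{\cC}$. (3) Check functoriality of $\op{im}_{\cC}$ — compatibility with identities is immediate, and compatibility with composition follows from the pasting law for pullbacks together with uniqueness in the slice-wise universal properties. (4) Exhibit the adjunction $\op{im}_{\cC} \dashv \op{i}_{\cC}$: given $g:C\to A$ in $\cC^{\arrowdiagram}$ and a mono $m : B\rightarrowtail A'$ in $\op{Mon}_{\cC}$, I must give a natural bijection
\[
\op{Mon}_{\cC}(\op{im}_{\cC}(g), m) \cong \cC^{\arrowdiagram}(g, \op{i}_{\cC}(m)).
\]
A morphism on the right is a commuting square $(u:C\to B,\, a:A\to A')$ with $m\compose u = a\compose g$; pulling $m$ back along $a$ reduces this, by the universal property of pullback, to a morphism in the slice $\cC/A$ from $g$ to the subobject $a^{*}m$, which by the slice-wise adjunction corresponds to a morphism of subobjects $\op{im}_{\cC,A}(g)\to a^{*}m$, which composed with $a^{*}m \to m$ gives a morphism in $\op{Mon}_{\cC}$ over $a$; conversely a morphism of monos $\op{im}_{\cC}(g)\to m$ determines its underlying codomain map $a$ and then factors through the pullback. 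One checks these two passages are mutually inverse and natural in both variables.

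The main obstacle I expect is step (2), defining $\op{im}_{\cC}$ on morphisms coherently: the fiberwise left adjoints $\op{im}_{\cC,A}$ are only characterized up to canonical isomorphism, so one must be careful to phrase the construction entirely through universal properties (using the pullback functors $a^{*}$ as the "transport" along the base) rather than making arbitrary choices, and then verify the Beck–Chevalley-type compatibilities that make functoriality and naturality go through. Everything else is a diagram chase once the pullback-based transport is set up; the genuinely substantive inputs are exactly the two hypotheses in the statement — existence of pullbacks (so monos pull back to monos and base change is available) and existence of images in each slice (Definition~\ref{def:images}).
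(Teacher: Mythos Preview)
Your proposal is correct and follows essentially the same approach as the paper: define $\op{im}_{\cC}$ on objects via the slice-wise image, and on morphisms by pulling back the target image along the base map and then invoking the universal property. Your exposition is in fact somewhat more careful than the paper's --- you make explicit the use of the slice-wise adjunction to produce the map of subobjects $\op{im}_{\cC,A}(g)\to a^{*}\op{im}_{\cC,A'}(g')$ and you outline the verification of functoriality and of the hom-set bijection, whereas the paper leaves the adjunction check as ``an easy (if tedious) exercise.''
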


\begin{proof}
The functor $\op{im}_{\cC}$ is defined as follows. For an object  $f:C \to A$ of $\cC^{\arrowdiagram}$, we set
$\op{im}_{\cC}(f) = \op{im}_{\cC/A}(f)$. For a morphism, $\theta = (\theta_0,\theta_1)$ of 
$\cC^{\arrowdiagram}$, given by the following commutative diagram,
\[
\xymatrix{
C \ar[r]^{\theta_1}\ar[d]^f & B\ar[d]^g \\
A \ar[r]^{\theta_0} & D,
}
\]
consider the pull-back diagram,
\[
\xymatrix{
C'  \ar@{..>}[r]^{\hspace{-.4in}\theta_1'}\ar@{..>}[d] & \op{dom}(\op{im}_{\cC,B}(g))\ar[d]^{\op{im}_{\cC,B}(g)} \\
A \ar[r]^{\theta_0} & D
}.
\]
Since, the morphism $\op{im}_{\cC,B}(g)\op{dom}(\op{im}_{\cC,B}(g)) \to D$ is a monomorphism, and the pull back
of a monomorphism is again a monomorphism, the left vertical arrow is a monomorphism.
Moreover, from the universal property of the pull-back applied to the diagram,
\[
\xymatrix{
C \ar@/_/[ddr]_f \ar@/^/[drr]^{\eps(g)\circ \theta_1} \\
&   & \op{dom}(\op{im}_{\cC/B}(g))\ar[d]^{\op{im}_{\cC,B}(g)} \\
&A \ar[r]^{\theta_0} & D
},
\]
we obtain the commuting diagram
\[
\xymatrix{
C \ar@/_/[ddr]_f \ar@/^/[drr]^{\eps(g)\circ \theta_1} \ar@{.>}[dr]|-{}\\
&C'  \ar@{..>}[r]^{\hspace{-.45in}\theta_1'}\ar@{..>}[d] & \op{dom}(\op{im}_{\cC/B}(g))\ar[d]^{\op{im}_{\cC,B}(g)} \\
&A \ar[r]^{\theta_0} & D
},
\]
where the dotted diagonal arrow is isomorphic to $\eps(f):C \rightarrow \op{dom}(\op{im}_{\cC/A}(f))$ (using the fact that the left vertical arrow is a monomorphism). 
We set $\op{im}_{\cC}(\theta) = \theta' = (\theta_1',\theta_0) \in \op{Mon}_{\cC}(\op{im}_{\cC}(f),  \op{im}_{\cC}(g))$. 

To check $\op{im}_{\cC}$ is left adjoint to $\op{i}_{\cC}$ is an easy (if tedious) exercise.
\end{proof}

\begin{example}[Examples of categories with images]
Most of the categories introduced before has images (and also pullbacks). It is easy to verify that the categories  $\opcat{Vect}_k$, $\opcat{Set}$, $\opcat{Top}$, $\opcat{SL}$,
$\opcat{SA}$ have images as well as pull-backs. The fact that the last two categories 
(see Notation~\ref{not:categories}, Part \eqref{itemlabel:not:categories:SL-SA} for their definitions)
have images is a consequence of the fact the images of semi-linear
(respectively, semi-algebraic) sets are closed under taking images under affine (respectively, polynomial) maps.
\end{example}

\subsection{Complexity of the image functor}
\label{subsec:complexity-of-image-functor}
\hide{
It is a folklore result that the P vs NP question in computational complexity (say in the real B-S-S model of computation)  is very closely related to the problem of proving lower bounds on the complexity of the images under projections of sequences of semi-algebraic sets 
belonging to the class $\mathbf{P}_\R$. 
However, to make this statement precise, one needs to define complexity classes 
sets of sequences of semi-algebraic sets, and also consider sequences of projections. 
}
The P vs NP question in computational complexity (say in the real B-S-S model of computation)
is fundamentally about comparing the complexities of sequences of semi-algebraic sets which
belong to an ``easy'' class (i.e. the B-S-S complexity class $\mathrm{P}_\mathbb{R}$), with the complexities
of sequences obtained by taking images under certain projections of sequences belonging to
the ``easy'' class (by taking the images under projections of sequences in the class $\mathrm{P}_\mathbb{R}$
one obtains the B-S-S complexity class $\mathrm{NP}_{\mathbb{R}}$). A formal definition
of $\mathrm{P}_{\mathbb{R}}, \mathrm{NP}_{\mathbb{R}}$ (and indeed of the whole polynomial
hierarchy $\mathrm{PH}_\mathbb{R}$) can be found in \cite[\S 1.2.1]{BZ09} and will not be repeated here. (We only note that existential quantifiers used in the definition of $\mathrm{NP}_{\mathbb{R}}$ in \cite {BZ09} geometrically corresponds to taking image under projection, or more precisely, elimination of 
existential quantifiers geometrically corresponds taking the image under a linear projection map.)
By replacing the notion of B-S-S complexity by other non-uniform notions of complexity
one can extend these definitions to the non-uniform case as well (see \cite{Isik2019} and \cite{BP18}).
Finally, what is perhaps closest to the point of view of this paper, in \cite{BasuConstr}, 
a sheaf theoretic analog of the classes $\mathrm{P}_{\mathbb{R}}$ and $\mathrm{NP}_{\mathbb{R}}$
are  defined. The elements of these classes are no longer sequences of semi-algebraic sets, but rather of
semi-algebraically constructible sheaves, and the role of taking images under projection maps, is replaced by the direct image functor. A ``P vs NP'' conjecture in this general setting is formulated in \cite{BasuConstr}
(Conjecture   3.83). It should be clear from the above discussion that the P vs NP question is intimately related to studying how badly the complexity of certain objects (say semi-algebraic sets) blow up (i.e whether the blow-up is polynomially bounded or not) after taking the image under certain tame maps.
The notion of \  ``complexity''  of a sequence of sets in the formulations described above is different in each case, but is related for example to
the number of steps taken by a B-S-S machine, or the size of an arithmetic circuit, required  for testing membership  in the corresponding sequence of sets.

With the discussion above as background we proceed to define an analog of the P vs NP question in  the categorical setting.  The main idea is to replace ``taking images under projections'' by the image
functor in the category under question (assuming it has an image functor), and replacing the 
notion of \ 
``complexity''  by categorical complexity. 
The categorical analog turns out to be simpler, coordinate independent,
and also free of taking sequences. 

In this paper, we do not attempt in this paper to prove any formal relationships between 
the categorical and standard versions of the P vs NP question, leaving such questions for
future investigations. However, we hope the discussion in the previous paragraph makes clear
the analogy between the standard  ``P vs NP'' questions, with the categorical analog.

First observe that for any category
$\cC$, a diagram, $D:I=(V,E,s,t)\to \cC^{\arrowdiagram}$, in the diagram category of $\cC^{\arrowdiagram}$, induces a diagram 
$D':I' = (V',E',s',t') \to \cC$ as follows (cf. Figure \ref{fig:Iprime}).

\begin{figure}
\includegraphics[scale=0.5]{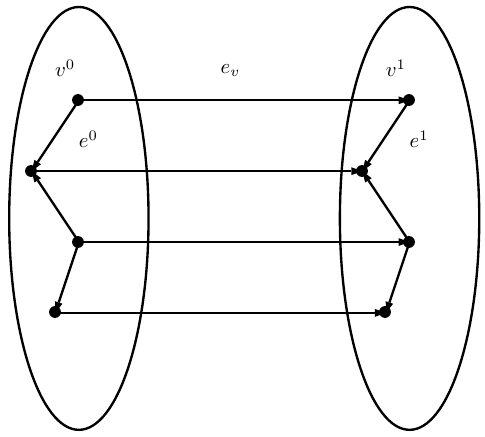}
\caption{The directed graph $I'$.}
\label{fig:Iprime}
\end{figure}
 
Here $V' = V^0 \cup V^1$ where $V^i = \{ v^i \mid v \in V\}$ is a copy of $V$ for $i=0,1$, and 
$E' = E^0 \cup E^1 \cup \bigcup_{v \in V} \{e_v\}$,
where $E^i = \{e^i \mid e \in E\}$ is a copy of $E$ for $i=0,1$, and
\[
\left.
\begin{array}{lll}
s'(e^i) &=& s(e)^i\\
t'(e^i) &=& t(e)^i
\end{array}
\right\} \mbox{ for $e \in E, i=0,1$},
\]
\[
\left.
\begin{array}{lll}
s'(e_v) &=& v^0,\\  
t'(e_v)  &=& v^1.
\end{array}
\right\} \mbox{ for $v \in V$},
\]
and 

\begin{eqnarray*}
D'(e^0) &=& D(s(e))\\
D'(e^1) &=& D(t(e)) \\
D'(e_v) &=& D(v), v \in V,
\end{eqnarray*}
remembering that for each vertex $v \in V$, $D(v)$ is a morphism of $\cC$.

Using the notation introduced above we have the following definition. 
\begin{definition} [Image complexity]
\label{def:image-complexity}
If $\cA$ is a set of morphisms in $\cC$, we will denote for $* = \op{lim},\op{colim},\op{mixed}$
for any diagram $D$  of  $\cC^{\arrowdiagram}$ (respectively, $\op{Mon}_{\cC}$), we define
the image complexity, $c^*_{\cC^{\arrowdiagram},\cA}(D)$ (respectively, $c^*_{\op{Mon}_{\cC},\cA}(D)$) of the diagram $D$ by
\begin{equation*}
c^*_{\cC^{\arrowdiagram},\cA}(D) = c^*_{\cC,\cA}(D')
\end{equation*}
(respectively, $c^*_{\op{Mon}_{\cC},\cA}(D) = c^*_{\cC,\cA}(D')$),
 where the right hand side is the (limit, colimit or mixed) complexity of the diagram $D'$  in the category $\cC$ (cf. Definition~\ref{def:categorical-complexity}),  with $\cA$ as the set of basic morphisms.
\end{definition}

\hide{
Thus, for any category $\cC$ which has images, and for any subset $\mathcal{A}$ of morphisms of $\cC$, and 
$* = \op{lim},
\op{colim},
\op{mixed}$, we can ask the following question.
}

Now suppose that a category $\cC$ has images and $\mathcal{A}$ is a set of morphisms of $\cC$.
Recall the definition of the complexity of a functor between two categories  with complexity functions (cf. Definition~\ref{def_functorcomplexity}). 
Applying Definition~\ref{def_functorcomplexity} to the functor 
\[
\op{im}_{\cC}: \cC^{\arrowdiagram} \rightarrow \op{Mon}_\C,
\]
with complexity functions $c^*_{\cC^{\arrowdiagram},\cA}$ and $c^*_{\op{Mon}_{\cC},\cA}$
(with $* = \op{lim},
\op{colim},
\op{mixed}$), on the source and target of the functor $\op{im}_{\cC}$, we obtain the function 
\[
C_{c^*_{\cC^{\arrowdiagram},\cA}, c^*_{\op{Mon}_{\cC},\cA}}(\op{im}_{\cC})(n).
\]

\begin{definition}[Complexity of the image functor]
\label{def:image-complexity}
With $* = \op{lim},
\op{colim},
\op{mixed}$,
we will denote by  
\begin{equation}
\label{eqn:image-complexity}
\IC_{\cC,\cA}^*(n) = C_{c^*_{\cC^{\arrowdiagram},\cA}, c^*_{\op{Mon}_{\cC},\cA}}(\op{im}_{\cC})(n)
\end{equation}
and call $\IC^*_{\cC,\cA}$ the \emph{$*$-complexity of the image functor} of the category $\cC$ (with respect to the
basic set of morphisms $\cA$).
\end{definition}

Now for any category $\cC$ which has images, and for any subset $\mathcal{A}$ of morphisms of $\cC$, and 
$* = \op{lim},
\op{colim},
\op{mixed}$, we can ask: 
\begin{question}
\label{question:CategoricalPvsNP}
Is the function \[
\IC^*_{\cC,\cA}
\] 
bounded by some  polynomial ? 
\end{question}

\subsection{The image functor in the categories $\opcat{SL}$ and $\opcat{SA}$}
First observe that the class of semi-linear (respectively, semi-algebraic) sets are closed under taking images of affine (respectively,
polynomial) maps. This is a consequence of Fourier-Motzkin elimination (respectively, Tarski-Seidenberg principle). However, the 
answer to the Question \ref{question:CategoricalPvsNP} may depend on the type of categorical complexity that is being considered. 

We consider the cases of 
limit, and mixed complexity, starting with the 
limit complexity in the categories 
$\opcat{SL}$ and $\opcat{SA}$. We prove that the answer to Question \ref{question:CategoricalPvsNP} in $\opcat{SL}$ and
$\opcat{SA}$ for 
limit complexity with the choice of the basic morphisms defined below.
This should not be too much of a surprise since the power of 
limit computation with these choices of basic morphisms is quite limited. Only closed convex polyhedral subsets of $\R^n$ can be constructed in $\opcat{SL}$ (respectively, basic closed semi-algebraic sets) using 
limit computations. 
(A basic closed semi-algebraic set is a semi-algebraic set defined by a conjunction of a finite number  weak polynomial inequalities 
of the form $P \geq 0$.)
Moreover, in the case of the category 
$\opcat{SL}$ we prove a polynomial upper bound on the number of facets of an object in terms of 
limit complexity (Lemma \ref{lem:facets} below). The negative answer to Question   \ref{question:CategoricalPvsNP} in this case can be deduced by exhibiting a sequence of morphisms of polynomially bounded complexity, 
such that the number of facets of the image grows super-polynomially.
We exhibit such a sequence using the well known properties of cyclic polytopes.  In the case of the category $\opcat{SA}$ the situation is even simpler, since it is well known that images under polynomial maps (in fact, projections along a coordinate) of a basic closed semi-algebraic set need not be basic closed. We exhibit a simple example of this phenomenon.

\subsubsection{Limit complexity of the image functor in  $\opcat{SL}$}
Recall that the objects of
the category $\opcat{SL}$  are (embedded) semi-linear sets and  morphisms between such sets which are restrictions of affine mappings. Let $\cA$ consist of the scalar multiplication morphisms $\R \xto{c} \R$ for each $c\in \R$, the addition morphism $\R^2 \xto{+} \R$,  
and morphisms $[0,\infty) \hookrightarrow  \R$, $\R\to 0$. It is easy to see that using  
limit computations one can produce every morphism $f:(A \subset \R^m)  \rightarrow (B \subset \R^n)$, where $A,B$ are closed polyhedral subsets of $\R^m$ and 
$\R^n$ respectively. 

\begin{lemma}
\label{lem:facets}
The number of facets of each object $A$ is bounded from above by 
$(c^{\op{lim}}_{\opcat{SL},\cA}(A))^2$.
\end{lemma}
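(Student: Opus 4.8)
The plan is to track how the number of facets of a computed object can grow through the steps of a limit computation, and to show that it can grow by at most a constant amount (in fact, by at most the number of new cone edges introduced) at each step. Since a limit computation of cost $C$ has at most $C$ steps and the initial diagram $D_0$ contributes at most $C$ basic morphisms, a linear bound per step would already give a quadratic bound overall. Concretely, I would first record the structural fact (already noted before the lemma) that every object produced by a limit computation in $\opcat{SL}$ with these basic morphisms is a closed convex polyhedral set, and moreover — by the constructivity assumption and Lemma~\ref{lem:constructivecomputation} together with Remark~\ref{rem:constructive-equalizer} — any such object $A$ is isomorphic to $\lim D_{i-1}|_{J_i'}$ for a subdiagram $D_{i-1}|_{J_i'}$ of $D_0$, hence to the equalizer of a pair of affine maps $\R^{m_1} \rightrightarrows \R^{m_2}$ with $m_1, m_2$ bounded by the cost $C$, intersected with the constraints coming from the $[0,\infty)\hookrightarrow \R$ basic morphisms.

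The key step is then a facet-counting argument on this explicit description. Writing $A$ as the solution set in $\R^{m_1}$ of a system consisting of (i) linear equations $\phi - \psi = 0$ induced by the edges of $D_0$ via the two canonical maps into $\prod_{e} D(t(e))$, and (ii) inequalities $y_j \geq 0$ for each copy of $[0,\infty)\hookrightarrow\R$ used, we see that $A$ is cut out by at most $m_2$ equalities and at most (number of $[0,\infty)$-morphisms in $D_0$) inequalities. Each inequality contributes at most one facet, so the number of facets of $A$ is bounded by the number of basic morphisms of the form $[0,\infty)\hookrightarrow\R$ appearing in $D_0$, which is at most $c^{\op{clim}}_{\opcat{SL},\cA}(A)$; in particular the bound $(c^{\op{clim}}_{\opcat{SL},\cA}(A))^2$ follows a fortiori, with room to spare. (The quadratic form of the statement presumably anticipates an alternative, more robust bookkeeping argument where one instead tracks facets step-by-step rather than appealing to the equalizer normal form; I would present whichever is cleaner, but the equalizer route gives the sharper linear bound immediately.)

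The main obstacle I anticipate is the interaction between the constructivity condition and the claim that \emph{every} facet of $A$ arises from one of the explicitly listed inequalities — i.e. that no new facets are created "for free" by the intersection with the linear subspace defined by the equalities, or by coincidental alignment of the projection morphisms. One must argue that a facet of the polyhedron $A\subset\R^{m_1}$, being a maximal proper face, must be supported by one of the defining inequalities $y_j\geq 0$ (the equalities cut down the ambient affine hull but do not themselves produce facets of $A$ relative to that hull), and that passing to the isomorphic embedded copy of $A$ in its own ambient space does not change the facet count since isomorphisms in $\opcat{SL}$ are restrictions of affine maps and affine isomorphisms preserve the face lattice. I would also need to handle the bookkeeping of how the cost $C$ relates to the number of $[0,\infty)$-basic morphisms — this is immediate since each such morphism contributes at least $1$ to $c(D_0,\dots,D_s)$ — and to double-check the edge case where $A$ is itself one of the basic objects $\R^n$, $[0,\infty)$, or $0$, for which the bound is trivial.
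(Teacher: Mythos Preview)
Your approach is correct and genuinely different from the paper's. The paper argues by induction on the length $s$ of the computation: it introduces, for each vertex $v$ of $D_s$, a quantity $h_s(v)$ counting how many limit vertices have $v$ in their cone, proves that the number of facets of any object is bounded by $\sum_v h_s(v)$, and observes that this sum is at most $(\text{cost})^2$. The inductive step uses that the limit object is an affine slice of a product of the ``top-level'' polytopes $A_v$, so its facet count is at most the sum of the facet counts of the $A_v$.

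You instead go straight to the normal form furnished by Lemma~\ref{lem:constructivecomputation} and Remark~\ref{rem:constructive-equalizer}: any computed object is (isomorphic to) an equalizer of affine maps between products of the basic objects $\R$, $\R^2$, $[0,\infty)$, $0$, hence cut out of some $\R^{m_1}$ by affine \emph{equations} together with one inequality $y_j\geq 0$ for each $[0,\infty)$-factor. Since facets of a polyhedron within its affine hull must come from the defining inequalities, and the number of $[0,\infty)$-vertices is bounded by the number of basic morphisms $[0,\infty)\hookrightarrow\R$ in $D_0$ (this being the only basic morphism with $[0,\infty)$ as source or target), you obtain a \emph{linear} bound in the cost --- strictly sharper than the quadratic bound stated. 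Your check that isomorphisms in $\opcat{SL}$ preserve facet counts is the right thing to verify; it holds because such an isomorphism restricts to an affine isomorphism between the affine hulls.

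What each buys: your equalizer route is shorter, more explicit, and yields the better bound. The paper's inductive bookkeeping is more self-contained (it does not invoke Lemma~\ref{lem:constructivecomputation} or the equalizer description) and tracks how facet counts propagate step by step, which could in principle adapt to settings where a single-step normal form is unavailable; but here your argument is clearly preferable.
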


\begin{proof}
Let $(D_0,\ldots,D_s)$ be the sequence of diagrams of a 
limit computation of $A$. We prove by an induction on $s$ the following statement from which the lemma will follow. 

For each vertex $v$ of $D_s$, let $h_s(v) = \card(H_s(v))$, where $H_s(v)$ is the set of vertices of $D_s$  which were introduced by taking a limit, and which has $v$ in its cone. We follow the convention that if $v$ is a vertex of $D_0$, then $H_s(v) = \{v \}$. 
Because of the constructivity assumption,  $h_s(v)$ is well defined. 

We claim that the number of facets of $A$ is bounded by $\sum_{v \in D_s} h_s(v)$. 
We prove the claim by an induction on $s$. The claim is clearly true if $s=0$, since the objects which are domains or codomains
of the basic morphisms have at most one facet, and $h_s(v) = 1$ for each vertex of $D_0$. Now, assume that the claim holds
for all $s' < s$.

For each vertex $v$ of $D_{s-1}$,  let $A_v = D_{s-1}(v)$. 
Let $V_{s-1}$ be the set of vertices of $D_{s-1}$ which do not belong to the limit cone of any vertex
$v$ of $D_{s-1}$ not equal to itself. Then, $A$ is the intersection of the product of the polytopes $A_v, v \in V_{s-1}$ with an affine subspace. This implies that the number of facets of $A$ is bounded by the sum of the number of facets of $A_v, v \in V_{s-1}$.
Now use the induction hypothesis to finish the proof of the claim.

It is clear that the lemma follows from the claim since for each vertex $v$ of $D_s$, $h_s(v)$ is bounded by the total number of vertices of $D_s$, which itself is bounded by the cost of the diagram computation  $(D_0,\ldots,D_s)$.
\end{proof}

Consider now the following example.
Denote by $\theta_m(t) = (t,t^2,\ldots,t^{2m}) \subset \R^{2m}$, the moment curve in $\R^{2m}$. The convex hull of $n$ distinct
points on the moment curve is called a cyclic polytope, $\op{Cyl}(n,m)$. For $ n> 2m$, the number of facets of $\op{Cyl}(n,m)$ is given by
\cite[Theorem 4]{Gale}
\[
\binom{n-m}{m} + \binom{n-m-1}{m-1}.
\]
In particular, the number of facets of $\op{Cyl}(4m+1,2m)$ equals $4m$. It is easy to derive that the 
limit complexity of 
$\op{Cyl}(4m+1,2m)$ is bounded by $O(m^2)$.
Now let $\pi_m:\R^{4m} \to \R^{2m}$ be the projection on the first $2m$ coordinates. Clearly, $\pi_m(\op{Cyl}(4m+1,2m)) = \op{Cyl}(4m+1,m)$,
and the number of facets of   $\op{Cyl}(4m+1,m)$ equals 
\[
\binom{3m+1}{m} + \binom{4m}{m-1}
\]
which is clearly exponential in $m$. 
It follows from Lemma \ref{lem:facets} that
the 
limit complexity of $\pi_m(\op{Cyl}(4m+1,2m))$ is exponentially large in $m$. 

Thus, the object $\pi_m: \op{Cyl}(4m+1,2m) \to \R^{2m}$ of the category $\opcat{SL}^{\arrowdiagram}$ has  
limit complexity polynomially bounded in $m$,
the 
limit complexity of $\op(im_{\opcat{SL}})(\pi_m)$ is exponentially large in $m$. 
It follows from this example that:
\begin{proposition}
\label{prop:IC-SL}
The function 
$\IC^{\lim}_{\opcat{SL},\cA}$ is not polynomially bounded
(where
$\cA$ consists of the scalar multiplication morphisms $\R \xto{c} \R$ for each $c\in \R$, the addition morphism $\R^2 \xto{+} \R$,  
and morphisms $[0,\infty) \hookrightarrow  \R$, $\R\to 0$).
\end{proposition}

\subsubsection{
Limit complexity of the image functor  in $\opcat{SA}$}
The case of the category $\opcat{SA}$ with respect to 
limit complexity is simpler. 
Recall that the objects of
the category $\opcat{SA}$  are (embedded) semi-algebraic sets and  morphisms between such sets which are restrictions of polynomial mappings. Let $\cA$ consist of the scalar multiplication morphisms $\R \xto{c} \R$ for each $c\in \R$, the addition morphism $\R^2 \xto{+} \R$,  
the multiplication  morphism $\R^2 \xto{\cdot} \R$,
and morphisms $[0,\infty) \hookrightarrow  \R$, $\R\to 0$. It is not difficult to see that the objects of $\opcat{SA}$ that can be constructed using a 
limit computation are exactly the basic closed semi-algebraic sets. On the other hand, it is well known that the image under polynomial maps
(for example, projections along some coordinates) of a basic closed semi-algebraic set need not be a basic closed semi-algebraic set.
For example, consider the real variety $V$ defined by 
\[
(X_1 - X_3^2)(X_2 - X_4^2) = 0.
\]
Denoting by $\pi:\R^4 \to \R^2$ the projection to $X_1,X_2$-coordinates, 
$\pi(V) = \{ (x_1,x_2) \in \R^2 \mid x_1 \geq 0 \vee x_2 \geq 0\}$ which is not a basic closed semi-algebraic set (as observed by Lojasiewicz, 
see  \cite[page 466]{Andradas-Ruiz}), and hence $\pi(V)$ has infinite 
limit complexity.

It follows that:

\begin{proposition}
\label{prop:IC-SA}
The function 
$\IC^{\lim}_{\opcat{SA},\cA}$
is not polynomially bounded (where
$\cA$ consists of the scalar multiplication morphisms $\R \xto{c} \R$ for each $c\in \R$, the addition morphism $\R^2 \xto{+} \R$,  
the multiplication  morphism $\R^2 \xto{\cdot} \R$,
and morphisms $[0,\infty) \hookrightarrow  \R$, $\R\to 0$).
\end{proposition}

\subsubsection{Complexity of the image functor with respect to mixed complexity in $\opcat{SL}$ and $\opcat{SA}$}
Mixed computation is  much more powerful than limit computation and we are unable to resolve the following question
which we posit as the categorical analog of the P vs NP question in the categories $\opcat{SL}$ and $\opcat{SA}$  (refer to the discussion in the beginning of this section).

\begin{question}
\label{question:mixed-SL-SA}
Are the functions 
$\IC^{\op{mixed}}_{\opcat{SL},\cA}, \IC^{\op{mixed}}_{\opcat{SA},\cA} $ 
polynomially bounded  (with the set of morphisms $\cA$ being the same as in Propositions~\ref{prop:IC-SL} and \ref{prop:IC-SA} respectively)
?
\end{question}

We also make the following conjecture.

\begin{conjecture}
The function 
$\IC^{\op{mixed}}_{\opcat{SA},\cA}$ 
is bounded singly exponentially.
\end{conjecture}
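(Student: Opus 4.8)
Unwinding Definition~\ref{def_functorcomplexity}, the conjecture asserts that there is a singly exponential function $g$ with $c^{\op{mixed}}_{\opcat{SA}^{\arrowdiagram},\cA}(\op{im}_{\opcat{SA}}(D))\le g(n)$ for every diagram $D$ in $\opcat{SA}^{\arrowdiagram}$ of mixed complexity at most $n$. The plan is to prove the a priori stronger bound $g(n)=O(n^2)$ (in particular singly exponential) by expressing the image functor itself through a bounded number of limit, colimit and basic-morphism steps relative to the morphism it is applied to. Given $f\colon C\to A$ in $\opcat{SA}$, one forms the relation $R = C\times_A C$ (a single pull-back, hence one limit step), and — assuming the relevant coequalizer exists in $\opcat{SA}$ — takes the coequalizer $Q$ of the two projections $R\rightrightarrows C$ (one colimit step); one then factors $f$ through $Q$ and extracts its monomorphic part $\op{dom}(\op{im}_{\opcat{SA}}(f))\rightarrowtail A$ by the one-extra-(co)limit device of Remark~\ref{rem_compositionsanduniversal}. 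This would give $c^{\op{mixed}}_{\opcat{SA},\cA}(\op{im}_{\opcat{SA}}(f))\le c^{\op{mixed}}_{\opcat{SA},\cA}(f)+O(1)$.

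First I would reduce from a whole diagram to its vertices and edges. A mixed computation of cost at most $n$ produces a diagram with $O(n)$ vertices and $O(n^2)$ edges, so $D$, being isomorphic to a subdiagram of the last stage, has $O(n)$ morphism-objects $f_v$ and $O(n^2)$ connecting morphisms. Producing $D$ costs at most $n$; running the construction above on each $f_v$ costs $O(1)$ more per vertex; and each connecting morphism of $\op{im}_{\opcat{SA}}(D)$ — the morphism between two images induced by a morphism of $D$ — is obtained from the universal property of the coimage by one further colimit, exactly as in the construction of $\op{im}_{\opcat{SA}}$ on morphisms via pull-backs. Summing, $c^{\op{mixed}}_{\opcat{SA}^{\arrowdiagram},\cA}(\op{im}_{\opcat{SA}}(D)) = O(n^2)$.

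The main obstacle is that $\opcat{SA}$ is not a regular category, so the coequalizer step above is delicate: the coequalizer of $R\rightrightarrows C$, when it exists, need not be the set-theoretic image of $f$ carrying its subspace semi-algebraic structure — already for $f\colon\R\to\R^2$, $t\mapsto(t^2,t^3)$, the image in $\opcat{SA}$ is (the normalization) $\R$ rather than the cuspidal cubic — and one must verify both existence of these coequalizers and that $C\twoheadrightarrow Q\rightarrowtail A$ genuinely realizes $\op{im}_{\opcat{SA}}(f)$; this needs real-algebraic input such as integral closedness of coordinate rings and finiteness properties of proper semi-algebraic maps. Where this approach is unavailable, the fall-back is to compute the underlying semi-algebraic set of $\op{dom}(\op{im}_{\opcat{SA}}(f))$ and its defining Boolean formula directly by the quantitative Tarski--Seidenberg theorem (see e.g.\ \cite{BPR10}) and then assemble that formula by a mixed computation — building its polynomials as in Theorem~\ref{thm_circuittodiagram}, taking intersections via limits and unions via colimits. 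The difficulty there is that a single elimination may involve up to $2^{O(n)}$ variables, since ambient dimensions can double at every step of a mixed computation, so one must show the elimination can be staged so that its cost stays singly (not doubly) exponential in $n$. Closing this gap is the heart of the argument.
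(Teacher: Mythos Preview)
The statement you are attempting to prove is a \emph{conjecture}: the paper does not supply a proof, only the remark that it should be viewed as the categorical analogue of singly exponential quantifier elimination over the reals. So there is no ``paper's proof'' to compare against, and any argument you give would be new.

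Your first approach cannot work as stated, and in fact the paper implicitly rules it out. You propose to build $\op{im}_{\opcat{SA}}(f)$ from $f$ using a bounded number of (co)limit steps, yielding $c^{\op{mixed}}(\op{im}_{\opcat{SA}}(f))\le c^{\op{mixed}}(f)+O(1)$ and hence an $O(n^{2})$ bound for diagrams. But Question~\ref{question:mixed-SL-SA} asks precisely whether the image functor has polynomially bounded mixed complexity and leaves this open; the present conjecture is the weaker singly exponential statement. An $O(n^{2})$ bound would resolve the stronger question. The concrete gap is the phrase ``extracts its monomorphic part'': the coequalizer $Q$ of the kernel pair gives the \emph{coimage} factorisation $C\twoheadrightarrow Q\to A$, and in a non-regular category like $\opcat{SA}$ the map $Q\to A$ need not be a monomorphism. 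Passing from $Q\to A$ to its monomorphic part is exactly applying $\op{im}_{\opcat{SA}}$ again, so the construction is circular. Your own cuspidal-cubic example already exhibits this: there the map $f$ is itself a monomorphism, so $\op{im}_{\opcat{SA}}(f)=f$, while the coequalizer of its kernel pair is just $C$; the two factorisations coincide only by accident, and for non-injective $f$ they will diverge with no categorical device to close the gap in bounded cost.

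Your fall-back is closer in spirit to what the paper hints at, namely quantitative Tarski--Seidenberg. But you correctly identify the obstruction: a mixed computation of length $n$ can produce objects embedded in $\R^{2^{O(n)}}$, so a single block of existential quantifiers may involve $2^{O(n)}$ variables, and the known elimination bounds are singly exponential \emph{in the number of variables}, giving a doubly exponential dependence on $n$. Staging the elimination to avoid this is not addressed, and is exactly why the statement remains a conjecture. Moreover, as your cusp example shows, $\op{im}_{\opcat{SA}}(f)$ need not even be the set-theoretic image equipped with the subspace embedding, so the formula produced by quantifier elimination may describe the wrong subobject of $A$; one would additionally have to argue that the categorical image and the set-theoretic image differ only by an isomorphism of bounded cost.
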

This should be thought of as the categorical analog of the existence of algorithms with singly exponential algorithms for elimination
of one block of existential quantifiers in the first order theory of the reals (see for example \cite{BPRbook2}).

\subsection{The image functor in the category $k\left[ x_1,\dots,x_n \right]\Mod$ and colimit complexity}
We have considered complexity of the image functor with respect to limit and mixed complexity in several categories. 
We now consider the image functor from the point of view of colimit complexity in the category 
$R\Mod$, 
where $R = k\left[ x_1,\dots x_n \right]$,
with the set basic morphisms defined in 
\eqref{eqn:basic-RMod}.

We describe a method for writing a colimit computation to compute the image of morphism of modules. A careful analysis of the part of the proof of Proposition
\ref{prop_rmod_images} which makes use of  the existence of 
Gr\"obner basis of modules would give an upper bound for the complexity of the image functor in $R\Mod$. With a naive analysis, we can only obtain an unnecessarily high bound (doubly exponential), so we leave the complexity analysis 
(that is the explicit dependence on the parameter $s$)
 out of the following statement.  

\begin{proposition}\label{prop_rmod_images}
  Let $M\xto{\varphi} N$ be a morphism diagram in $R\Mod$, computed by a colimit computation
 of size $s$. 
  Then, there is a colimit computation that computes $\op{im}(\varphi) \to N$ 
 of size bounded  by some function of $s$.
\end{proposition}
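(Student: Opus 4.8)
The plan is to mimic the strategy used in Theorem~\ref{thm_rmoddiagramtocircuit}: reduce the computation of $\op{im}(\varphi) \to N$ to concrete linear-algebra over $R = k[x_1,\dots,x_n]$, and then build a colimit computation realizing that linear-algebraic data. First, using Lemma~\ref{lem:constructivecomputation} and Remark~\ref{rem:constructive-morphism-colimits}, I would replace the given colimit computation of $M \xto{\varphi} N$ by a single subdiagram $D_0' \subset D_0''\subset D_0$ of basic morphisms whose colimits are $M$ and $N$ respectively, with the induced map being $\varphi$. By the coequalizer description in Remark~\ref{rem:constructive-coequalizer}, $N$ is presented as $\coprod_{v\in V''} D''(v) \to N$ modulo the relations coming from the edges, and similarly for $M$; each $D(v)$ is one of $R$, $R\oplus R$, or $\{0\}$, so both $M$ and $N$ are finitely presented $R$-modules with explicit presentation matrices over $R$, and $\varphi$ is given by an explicit matrix. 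The point of passing through $D_0$ is that all of this presentation data is "visible" and of size $O(s)$.

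Next, I would produce the image module $\op{im}(\varphi)$ and the monomorphism $\op{im}(\varphi)\rightarrowtail N$ purely algebraically. Given presentations $R^{a_M} \xto{P_M} R^{b_M} \twoheadrightarrow M$ and $R^{a_N}\xto{P_N} R^{b_N}\twoheadrightarrow N$ and a lift $\tilde\varphi : R^{b_M}\to R^{b_N}$ of $\varphi$, the image submodule of $N$ is generated (as a submodule of $N$) by the images of the generators of $M$; equivalently, $\op{im}(\varphi)$ is the cokernel of the map $R^{a_M}\oplus R^{a_N} \to R^{b_M}\oplus \ker$-type construction. Concretely: $\op{im}(\varphi) \iso \operatorname{coker}\!\big( (P_M\ |\ \text{syzygies}) \big)$, and the inclusion into $N = \operatorname{coker}(P_N)$ is induced by $\tilde\varphi$ on the free modules. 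The step that requires Gröbner bases of modules is computing a presentation of the submodule $\op{im}(\varphi)\subset N$ (one needs a generating set for the kernel of $R^{b_M}\to N$, i.e. $\tilde\varphi^{-1}(\operatorname{im}P_N)$, which is a syzygy/preimage computation solvable by module Gröbner bases over $R$). This yields finitely many new presentation vectors over $R$, each of which can be built from the $x_i$ and constants by the procedure of Remark~\ref{rem_fcomp}/Theorem~\ref{thm_circuittodiagram} adapted to $R\Mod$ — that is, the $R$-module entries are polynomials in the $x_i$, each obtainable by a colimit sub-computation using the basic morphisms $R\xto{x_i}R$, $R\xto{c}R$, $\Delta$, $+$, $i_1$, $i_2$.

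Finally, I would assemble the colimit computation: build the free modules $R^{b_N}$ as colimits of copies of $R$, attach the presentation maps of $N$ and then the additional relation maps cutting $N$ down to... rather, one builds the diagram whose colimit is $\op{im}(\varphi)$ (free module on generators of $\op{im}(\varphi)$ with its relations attached, via the same gadgets used in Proposition~\ref{prop:formula-vs-RMod} for realizing an $R$-linear map $R\xto{p}R$ by a sub-diagram), and in the same computation build $N$; the monomorphism $\op{im}(\varphi)\rightarrowtail N$ is then obtained as a cocone map, realized by one extra colimit as in Remark~\ref{rem_compositionsanduniversal}. Since all the Gröbner-basis output data is finite and depends only on $s$ (the presentation sizes of $M$, $N$, $\varphi$ being $O(s)$), the total number of steps is bounded by some function of $s$, which is all that is claimed. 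The main obstacle is the middle step: controlling (even non-explicitly) that a presentation of the submodule $\op{im}(\varphi)\subseteq N$ can be extracted from the presentation data by module Gröbner basis computations, and checking that the resulting generators and relations, which are polynomials in $k[x_1,\dots,x_n]$, can indeed be reconstructed inside $R\Mod$ by colimit sub-computations using only the basic morphisms in \eqref{eqn:basic-RMod} — the $\Delta$ and $+$ morphisms are what make arbitrary $R$-linear combinations expressible, exactly as in the proof of Proposition~\ref{prop:formula-vs-RMod}.
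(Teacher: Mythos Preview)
Your proposal is correct and follows essentially the same approach as the paper: reduce via Lemma~\ref{lem:constructivecomputation} and Remark~\ref{rem:constructive-morphism-colimits} to nested subdiagrams $D_M\subset D_N$ of basic morphisms, extract explicit finite presentations of $M$, $N$, and $\varphi$ via the coequalizer description, use Gr\"obner bases of modules to compute a presentation matrix for $\op{im}(\varphi)\subset N$, and then rebuild $\op{im}(\varphi)\to N$ as a colimit computation from that presentation data. The only cosmetic difference is that the paper exploits the fact that $\varphi$ is literally induced by the inclusion of generators (since $D_M\subset D_N$), so no separate lift $\tilde\varphi$ is needed, and it cites \cite{Kreuzer-Robbiano} for the specific Gr\"obner-basis step rather than describing it as a syzygy/preimage computation.
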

\begin{proof}
  By Lemma~\ref{lem:constructivecomputation} and Remark~\ref{rem:constructive-morphism-colimits}, the colimit computation that produces $M \xto{\varphi} N$ can be simplified to produce (only) $M \xto{\varphi} N$ in two colimit steps. The first one is taking the colimit of diagram $D_{M}$ of basic morphisms which produces $M$. The second one is the colimit of a larger diagram $D_N \supset D_M$ together with $M$ and all the cocone morphisms from the objects in $D_M$ to $M$. Replacing colimits with coequalizers and coproducts, we have that $M$ is isomorphic to the quotient
\[
\xymatrix{
\Bigoplus_{\rho \in s(D_M)} R^{j_{\rho}}  \ar[r]^{A_M}  & \Bigoplus_{\gamma \in \op{v}(D_M)} R^{j_{\gamma}} \ar[r] & M,
}
\]
where $\gamma$ runs over all the vertices in $D_M$ and $\rho$ runs over all sources of arrows in $D_M$; and $j_{\rho}$ and $j_{\gamma}$ are $0$, $1$ or $2$, based on whether the corresponding basic object is $\left\{ 0 \right\}$, $R$, or $R^2$. 
(Note that when using Remark~\ref{rem:constructive-morphism-colimits} we are implicitly using the fact, that in the  category $R\Mod$, the coequalizer of two morphisms  $\phi,\psi: A \rightarrow B$ is isomorphic to the cokernel of $\phi - \psi$.)
 
Similarly we can write $N$ as a quotient,  
\begin{equation}
\xymatrixcolsep{2pc}
\xymatrix{
\Bigoplus_{\rho \in s(D_N)} R^{j_{\rho}} \oplus \Bigoplus_{v \in\op{v}(D_M)} R^{j_{v}} \ar[r]^>>>>>{\,}  &\Bigoplus_{\gamma \in \op{v}(D_N)} R^{j_\gamma} \oplus M \ar[r]  &  N
}.
  \label{eq_NNN}
\end{equation}

Since $M$ is the colimit of $D_M$, we can remove the extra sum on the left and the $M$ summand in middle. So $N$ is the quotient: 
\[
\xymatrix{
\Bigoplus_{\rho \in s(D_N)} R^{j_{\rho}} \ar[r]^>>>>>{A}  & \Bigoplus_{\gamma \in \op{v}(D_N)} R^{j_\gamma} \ar[r]  &  N
}
\]
Combining the direct sums, we get a commuting diagram with exact rows :
\[
\xymatrix{
R^{m_2}  \ar[r]^{A_M} \ar[d]^{i_2}  &  R^{m_{1}} \ar[r]\ar[d]^{i_1} & M\ar[d]^{\varphi}\ar[r] &0\\
R^{n_2} \ar[r]^>>>>>{A}  &  R^{n_1} \ar[r]  &  N \ar[r]&0,
}
\]
where the maps $i_1$ and $i_2$ are the inclusion of the first $m_1$ and $m_2$ coordinate spaces respectively. Written this way, the map $\varphi$ from $M$ to $N$ is induced by the inclusion of the generators of $M$ into the generators of $N$. 

To construct the image $\op{im}(\varphi)$, we need to find the relations among the images of the generators of $M$, i.e. the first $m_1$ generators of $N$. It is possible to use Gr\"obner basis methods to obtain a presentation of $\op{im}(\varphi)$,
that is obtain a $B = m_1 \times u$ matrix with entries in $R$, such that  
\[
\xymatrix{
R^u\ar[r]^{B} & R^{m_1}\ar[d]^{i_1}\ar[r] & \op{im}{\varphi}\ar[r]\ar[d]_{\iota}&0 \\
R^{n_2}\ar[r] & R^{m_2} \ar[r]& N \ar[r] & 0
} 
\]
(see for instance \cite[Proposition 3.3.1, Part (b)]{Kreuzer-Robbiano}).
The size of the matrix $B$ and the degrees of the polynomials appearing in it are bounded by some explicit  
(possibly doubly exponential or worse) function of the the degrees of the polynomials appearing in the matrix $A$
coming from the analysis of algorithms for computing Gr\"obner basis of submodules of a free $R$-module given a set
of generators. The important point for us is that these degrees and the size of $B$ are thus bounded by some function of
$s$ (the colimit complexity of the given morphism $M\xto{\varphi} N$).  

It is easy to see that using the basic morphisms and colimits one can realize using a colimit computation any $R\mod$ homomorphisms between between
two free $R$-modules of finite rank, with cost
depending on the degrees of the polynomials appearing in the matrix  corresponding to the homomorphism. 
Thus, we can obtain the homomorphism 
$\R^u \xtonormal{B} \R^{m_1}$ using colimit computation of size bounded by some function of $s$.

We can now obtain the $R$-module  $\op{im}(\varphi)$ using a colimit computation by taking the colimit
of the following diagram:
\[
0 \leftarrow R^{u} \xto{B} R^{m_1}.
\]
The inclusion homomorphism $\iota: \op{im}(\varphi) \xto{\iota} N$ can be computed, by
constructing $N$ again by taking the basic object $R$'s in the computation of $\op{im}(\varphi)$ corresponding to the generators of $M$ and constructing any remaining generators and relations for $N$ from \eqref{eq_NNN} above.

\hide{
To do this, let $A'$ be an $(n_1-m_1)\times n_2$ matrix consisting of the the lower $n_1 - m_1$ rows of $A$. We will use use Gr\"obner basis methods to find generators for the kernel of $A'$, i.e. syzygies for the columns of $A'$, and apply these syzygies to the full columns of $A$ to get the relations among the generators of $M$ mapped to $N$. 

More precisely, consider the map $R^{n_2} \xto{A'} R^{n_1-m_1}$. Using the position over lexicographical ordering (or another ordering) on monomial times basis elements in $R^{n}$, extend the columns $\left\{ f_1,\dots,f_{n_2} \right\}$ of $A'$ into a Gr\"obner basis $\left\{ f_1,\dots,f_s \right\}$, $f_i\in R^{n_1-m_1}$, for the image of $A'$. Let $R^{s} \xto{B} R^{n_2}$ be the map sending the standard basis element $e_i$ to the vector $(a_{i1},\dots,a_{in_2})$ of coefficients so that $f_{i} = \sum_{j=1}^{n_2} a_{ij} f_j$. Since the $f_i$ form a Gr\"obner basis, we have, say $s'$, relations of the form
$$m_{ji}f_i - m_{ij}f_j - \sum_{u=1}^{s} g^{(ij)}_{u} f_u  = 0$$
where the $m_{ij}$ and $m_{ji}$ are the smallest monomials set so that the top order terms of the first two terms cancel each other, and where the $g^{(ij)}_{u}$ are computed using the division algorithm.
If we store the coefficients of the $f's$ for each relation in a vector in $R^{s}$, then, the image of the corresponding map $R^{s'} \xto{Z} R^{s}$ is the kernel of the map $R^s \xto{A'B} R^{n_1-m_1}$, cf.  \cite{schreyer80, schreyer91}. Moreover, the image of the map $R^{s'} \xto{BZ} R^{n_2}$ is the kernel of the map 
$R^{n_2} \xto{A'} R^{n_1-m_1}$,
cf.  \cite{wall89, buchberger85}.   

Now take $R^{s'} \xto{ABZ} R^{n_1}$. Since the image of $BZ$ was the kernel of $A'$ which was the lower portion of the matrix $A$, we have that the image of $ABZ$ is the intersection of the image of $A$ with $R^{m_1}$ sitting in the first $m_1$ coordinates. Hence we have a map $R^{s'} \xto{ABZ} R^{m_1}$ whose image is the submodule of relations among images of the generators of $M$ mapped in $N$ by $\varphi$. 

Finally, construct $\op{im}(\varphi) \xto{\iota} N$ in a colimit computation as follows. Construct 
\[
0 \leftarrow R^{s'} \xto{ABZ} R^{m_1}
\]
 and take the colimit to get $\op{im}(\varphi)$;
 then, construct $N$ again by taking the basic object $R$'s in the computation of $\op{im}(\varphi)$ corresponding to the generators of $M$ and constructing any remaining generators and relations for $N$ from \ref{eq_NNN} above. 
 }
\end{proof}

\section{Open problems and future directions}
\label{sec:open}
In this section, we list some open problems suggested by the contents of this paper.
\begin{enumerate}[1.]
\item
Resolve questions \ref{question:permanent} and \ref{question:gap}.
\item 
Prove singly exponential upper bound on the mixed complexity of the image functor in the semi-algebraic category. This would correspond to the singly exponential complexity
algorithms for computing the image of semi-algebraic sets under polynomial maps, that follow from the critical point method in algorithmic real algebraic geometry (see for example 
\cite[Chapter 14]{BPRbook2}).
 
\item Prove singly exponential upper bound on the categorical complexity of the image functor in the category $R\Mod$ (cf. Proposition \ref{prop_rmod_images}).
\item
One important research direction in quantitative semi-algebraic geometry has been to prove tight bounds on the topological complexity of semi-algebraic sets in terms of various
parameters bounding the sizes of the formulas defining them -- such as the number of polynomial inequalities, their degrees or the number of monomials in their support or even
the additive complexity (see for example \cite{BPR10}). One could ask for similar results in the categorical setting. In the language introduced in this paper this would mean
investigating the \emph{complexity of the homology functor  $\HH: \opcat{SA} \rightarrow \Z\Mod$}. 

\item Investigate the relationship between the B-S-S complexity classes over the real numbers and  the (mixed)-categorical
complexity of objects in $\opcat{SA}$.

\item
Prove or disprove that the image functor in various categories has polynomially bounded (mixed) complexity. This is the categorical version of the P vs NP question in 
ordinary complexity theory (see Questions \ref{question:CategoricalPvsNP}, \ref{question:mixed-SL-SA}).

\item Notice that the image functor $\op{im}_{\cC}$ has a right adjoint $\op{i}_{\cC}$ whose complexity  is clearly bounded polynomially 
(in fact, $C_{\op{i}_{\cC},\cA}(n) = n$ for any set $\cA$ of basic morphisms  in $\cC$). More generally, functors often come in adjoint pairs, and often it is easy to show that one of them has polynomially bounded complexity, while the other is conjecturally hard (i.e. not polynomially bounded). More examples are given in the category of constructible sheaves  in \cite{BasuConstr}. As already noted left adjoint functors preserve colimits and thus are well behaved with respect to colimit computations, while right adjoint ones preserve limits and are well behaved with respect to limit computations. It will be interesting to analyze the role of adjointness on mixed limit-colimit computations in different categories.

\item
Extend the notion of categorical and functor complexity to categories which are further enriched -- for example, triangulated categories, derived categories etc. This would
enable one to study for example the complexity of sheaves in the derived category and various natural functors (for example the six operations of Grothendieck) from the point of view of categorical complexity. A first attempt towards studying the complexity of constructible sheaves and their functors was undertaken in \cite{BasuConstr}. However, the definition 
of complexity in \cite{BasuConstr} was not categorical.

\end{enumerate}

\bibliographystyle{amsalpha}
\bibliography{master}

\end{document}